\documentclass[a4paper,USenglish,cleveref, autoref, thm-restate]{lipics-v2021}
\hideLIPIcs

\usepackage{xcolor}
\usepackage{placeins}
\usepackage{xspace}
\usepackage{csquotes}
\usepackage{bm}
\usepackage{complexity}
\usepackage[sort]{cite}

\crefname{observation}{Observation}{Observations}

\newcommand{\refcommand}[1]{\textnormal{\sffamily[\cref{#1}]}}
\DeclareCaptionLabelFormat{bold}{\textbf{\sffamily(#2)}}
\newcommand{\obs}{\mathcal{G}_d}
\definecolor{signalblue1}{RGB}{58,118,240}
\definecolor{KITred}{rgb}{0.627,0.117,0.156}
\definecolor{KITorange}{rgb}{0.862,0.627,0.117}
\definecolor{KITgreen}{rgb}{0,0.588,0.509}
\DeclareTextFontCommand{\emph}{\color{signalblue1}\em}
\DeclareMathOperator{\tw}{tw}

\NewDocumentCommand{\calP}{}{\ensuremath{\mathcal{P}}\xspace}
\NewDocumentCommand{\calL}{}{\ensuremath{\mathcal{L}}\xspace}
\NewDocumentCommand{\calG}{}{\ensuremath{\mathcal{G}}\xspace}
\let\oldFPT\FPT
\renewcommand{\FPT}{\oldFPT\xspace}

\title{Separating Geodesic Structure and Product Structure}

\author{Laura Merker}{Karlsruhe Institute of Technology, Germany}{laura.merker2@kit.edu}{https://orcid.org/0000-0003-1961-4531}{}
\author{Lena Scherzer}{University of Hamburg, Germany}{lena.scherzer@uni-hamburg.de}{https://orcid.org/0009-0001-1524-7378}{}
\author{Samuel Schneider}{Karlsruhe Institute of Technology, Germany}{samuel.schneider@kit.edu}{https://orcid.org/0009-0002-9680-4048}{}

\authorrunning{L. Merker, L. Scherzer, and S. Schneider}

\Copyright{Laura Merker, Lena Scherzer, and Samuel Schneider}

\ccsdesc[500]{Mathematics of computing~Graph theory}
\ccsdesc[500]{Mathematics of computing~Graph algorithms}

\keywords{product structure, row treewidth, geodesic structure, geodesic treewidth}
\category{}

\relatedversion{ An extended abstract of this paper appears in the proceedings of the \textit{34th Annual European Symposium on Algorithms} (ESA 2026).}

\nolinenumbers

\begin{document}

\maketitle

\begin{abstract}
    The geodesic treewidth of a graph $ G $ is the smallest $k$ for which there is a partition $\mathcal{P}$ into geodesics such that $G/\mathcal{P}$ has treewidth $k$, where $G/\mathcal{P}$ is obtained from $ G $ by contracting each part of $ \mathcal{P} $.
    Based on this notion, row treewidth was developed and is defined for a graph $ G $ as the smallest $ k $ such that $ G \subseteq H \boxtimes P $ for some graph $ H $ of treewidth $ k $ and a path $ P $.
    Equivalently, the row treewidth of a graph $ G $ is the smallest $ k $ for which there is a partition $ \mathcal{P} $ into disjoint unions of geodesics that are aligned with respect to some layering such that $ G/\mathcal{P} $ has treewidth $ k $.

    We separate the two notions by showing that bounded row treewidth does not imply bounded geodesic treewidth and by presenting a polynomial-time algorithm to decide whether a graph of treewidth 2 has geodesic treewidth 1, which is known to be \NP-hard for row treewidth [Biedl, Eppstein, Ueckerdt, 2025].
    More generally, we provide an algorithm to decide whether a given graph has geodesic treewidth at most $ d $ that is \XP\ in the treewidth, whereas there is no such algorithm for row treewidth, unless $ \P = \NP $ [Biedl, Eppstein, Ueckerdt, 2025].
    On the other hand, we show that computing the geodesic treewidth is \NP-hard and that every graph with geodesic treewidth 1 has bounded row treewidth.
    Moreover, we improve the best known lower bound on the geodesic treewidth of planar graphs to 5.
\end{abstract}

\section{Introduction}
\label{sec:Introduction}

Bounded \emph{treewidth} allows using a wide range of algorithmic techniques to solve numerous important problems efficiently.
However, many common graph classes, like planar graphs, do not have bounded treewidth. Hence, we are interested in generalizations of treewidth that apply to broader classes of graphs but still provide comparable structural benefits.
In particular, we need to be able to handle large grids as they are planar graphs of high treewidth.
A popular way to define such generalizations is to take a partition \calP of a graph $ G $ into (disjoint unions of) geodesics\footnote{A \emph{geodesic} is a shortest path between any two vertices in the graph.} and aim for small treewidth of the \emph{quotient} $ G / \calP $, i.\,e., the graph that is obtained by contracting each (possibly disconnected) part of \calP into a single vertex.
For a first example, note that a grid admits a partition into geodesics such that the quotient is a path and thus has small treewidth.
The two generalizations of treewidth we consider in this paper are a concept introduced by Pilipczuk and Siebertz~\cite{PILIPCZUK2021111}, which we call geodesic treewidth, and row treewidth, introduced by Dujmović, Joret, Micek, Morin, Ueckerdt, and Wood~\cite{PlanarGraphsQueueNumber} and Bose, Dujmović, Javarsineh, Morin, and Wood~\cite{Bose_2022}.

The \emph{geodesic treewidth} of a graph $ G $ is the smallest $k$ for which there is a partition $\mathcal{P}$ into geodesics
such that $G/\mathcal{P}$ has treewidth $k$, see \cref{fig:GeodesicStructure}.
This concept is first introduced by Pilipczuk and Siebertz~\cite{PILIPCZUK2021111} to prove structural and algorithmic results on proper minor-closed graph classes, in particular planar graphs, concerning $ p $-centered colorings, bounded expansion, and subgraph-isomorphism.
If a graph class has bounded geodesic treewidth, we say it admits \emph{geodesic structure}.

\begin{figure}
    \centering
    \includegraphics{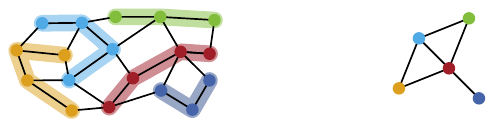}
    \caption{%
    Left: A graph $G$ with a partition $\mathcal{P}$ into geodesics.
    Right: The graph $G/\mathcal{P}$. 
    }       
    \label{fig:GeodesicStructure}
\end{figure}

Based on geodesic structure, Dujmović, Joret, Micek, Morin, Ueckerdt, and Wood~\cite{PlanarGraphsQueueNumber} develop the concept of product structure, which led to many structural~%
\cite{%
queue_number_42,
ProductStructurekPlanarGraphs,
hframedgraphs,
distel2023powers,
ShallowMinorsUSW,
hendrey2025structurekmatchingplanargraphs,
ShorterLabelingSchemesforPlanarGraphs,
AdjacencyLabellingforPlanarGraphsAndBeyond,
SparseUniversalGraphsForPlanarity,
Dujmovi__2020,
Dbski2021,
DEMW-23,
dujmović_esperet_morin_walczak_wood_2022,
VertexRankingPlanarGraphs,
ReducedBandwidth,
DGLTU-22,
JR-23,
KPS-24,
OddColouringsofGraphProducts,
goetze2025strongoddcoloringminorclosed,
hyperbolic,
square_graphs} 
and algorithmic~\cite{product_structure_algorithm,planar_linear,planar_nlogn,biedl2023complexityembeddinggraphproducts} results.
In contrast to geodesic structure, product structure does not require a single geodesic in each part, but instead disjoint unions of geodesics, where the parts are aligned by a layering. For this, a partition \calP of a graph has \emph{layered width} $ 1 $ with respect to some layering if each part of \calP contains at most one vertex in each layer.
The \emph{row treewidth} of a graph $ G $ is the smallest $k$ for which there is a partition $\mathcal{P}$ of layered width 1 such that $G/\mathcal{P}$ has treewidth $k$, see \cref{fig:ProductStructure}.
We say a graph class admits \emph{product structure} if it has bounded row treewidth.
Another perspective on product structure is that a graph $ G $ has row treewidth at most $ k $ if and only if $ G $ is a subgraph of the strong product $ H \boxtimes P $ of a graph $ H $ of treewidth $ k $ and a path $ P $.

\begin{figure}
    \centering
    \includegraphics{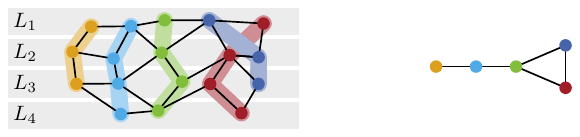}
    \caption{%
    Left: A graph $G$ with a layering $\mathcal{L}=(L_1, L_2, L_3, L_4)$ and a partition $\mathcal{P}$ of layered width~1. Right: The corresponding quotient $G/\mathcal{P}$ resulting from contracting each part of $\mathcal{P}$.
    }       
    \label{fig:ProductStructure}
\end{figure}

Although product structure has evolved from geodesic structure, only few results are known on the relationship between these two notions.
Most prominently, it is known that planar graphs and bounded-genus graphs admit both geodesic structure and product structure~\cite{PlanarGraphsQueueNumber,PILIPCZUK2021111}.
More general, for proper-minor closed graph classes, geodesic structure implies product structure since geodesic structure implies linear local treewidth%
\footnote{%
This follows from the definitions. See \cref{sec:Introduction:Preliminaries} for the definition and a brief argument.
}
(even for general graphs),
which is equivalent to product structure for proper minor-closed graph classes~\cite{PlanarGraphsQueueNumber}.
It is also remarkable that the best known upper bounds on the geodesic treewidth and row treewidth of planar graphs is 6 in both cases and is shown with a single construction that proves both upper bounds by providing a partition into geodesics of layered width 1~\cite{ImprovedPlanarGraphProductStructureTheorem}.
Moreover, such a partition can be computed in linear time~\cite{planar_linear}.
Similarly, the lower bound of 3 is certified by the same graph for both parameters~\cite{PlanarGraphsQueueNumber}.
While these results indicate that geodesic structure and product structure coincide on some interesting graph classes, in this paper we separate the two concepts both structurally and algorithmically.
In particular, we show that geodesic treewidth is interesting from an algorithmic perspective since it is less hard to compute than row treewidth.

We assume familiarity with basic graph concepts and refer to \cref{app:preliminaries} for details.

\subsection{Contributions}
\label{sec:Introduction:Contributions}

We compare product structure to its predecessor, geodesic structure, and find that the two are not equivalent.
For this, we show that a number of interesting graph classes admit product structure but not geodesic structure, including 1-planar graphs and graphs of row~treewidth~1.

\begin{restatable}{theorem}{psNotGs}\refcommand{sec:PSvsGS:pSnotGS}\label{thm:psNotGs}
    Product structure does not imply geodesic structure.
\end{restatable}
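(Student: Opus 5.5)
The plan is to exhibit, for every $n$, a graph $G_n$ of bounded row treewidth whose geodesic treewidth tends to infinity with $n$. Since row treewidth at most $k$ is the same as $G\subseteq H\boxtimes P$ with $\tw(H)\le k$, I will build $G_n$ directly as a subgraph of $T\boxtimes P$. Here $T$ is a tree, so $\tw(T)=1$ and product structure is automatic; $T$ will be a complete binary tree of depth $h$, or a long path with pendant structure. The work is all in the lower bound on geodesic treewidth.

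\textbf{Why the obvious candidate fails, and what to change.} The full product $T\boxtimes P$ is useless. The columns $\{t\}\times V(P)$ are geodesics, and contracting them gives $T$, which has treewidth $1$. So I keep only a subgraph: I delete all ``vertical'' edges $(t,i)(t,i+1)$ and keep the edges inside layers together with the diagonal edges $(t,i)(t',i+1)$ for $tt'\in E(T)$. Possibly I also add extra in-layer edges, as long as the graph stays inside $T\boxtimes P$. A path between two copies of the same tree vertex now has to zigzag through neighbouring tree vertices. The intended effect is that every geodesic is a short, ``diagonal'' object: its projection to $P$ and its projection to $T$ are both geodesic-like. Concretely, I aim for a key lemma: every geodesic of $G_n$ meets only $O(1)$ vertices of a suitable highly connected substructure. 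A natural choice of substructure is a large grid minor, or a well-linked set, living in a few consecutive layers.

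\textbf{Turning the key lemma into a lower bound.} Suppose the geodesic treewidth is at most $d$, witnessed by a partition $\calP$ into geodesics. Then $G_n/\calP$ has balanced separators of size $d+1$. Lifting such a separator to $G_n$ gives a set that is the union of $d+1$ geodesics and is a balanced separator of $G_n$, with balance measured by any weighting concentrated on the substructure. By the key lemma, this separator contains only $O(d)$ vertices of the well-linked set $W$. But any balanced separator for $W$ must contain $\Omega(|W|^{\varepsilon})$ vertices of the linkage, for example $\Omega(\sqrt{|W|})$ vertices of a grid. This is a contradiction once $n$ is large, so the geodesic treewidth is unbounded. It is also worth checking whether the class of $1$-planar graphs, or graphs of row treewidth $1$, already contains such a family; the introduction suggests this, and it would give the stronger statement in the same proof.

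\textbf{Main obstacle.} The hard step is the key lemma. I need a concrete choice of $T$ and of edges inside $T\boxtimes P$ such that geodesics really are confined, meaning no geodesic can run a long way through the dense part. At the same time the dense part must have treewidth much larger than what $O(d)$ short geodesics can separate. The tension is this: layering forces distances between far-apart layers to be large, which permits long vertical geodesics, while tree-like shortcuts shorten distances inside layers. My first attempt uses the binary-tree shortcuts to make the in-layer distance between any two vertices $O(\log n)$ while the grid-like part has size polynomial in $n$. In that setting a geodesic meets only $O(\log n)$ vertices of each layer and only boundedly many columns of the grid-like part. If this bound only gives $O(\log n)$ instead of $O(1)$ vertices per geodesic, the separator argument still works, because it only needs $O(d\log n)=o(\sqrt{|W|})$.
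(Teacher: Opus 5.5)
Your proposal has a genuine gap. The entire lower bound rests on the ``key lemma'', but you never pin it down: there is no concrete graph and no concrete set $W$, and nothing is proved. Worse, for the one concrete construction you describe, the lemma is false.

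First, every in-layer edge $(t,i)(t',i)$ of $T\boxtimes P$ already has $tt'\in E(T)$. So ``keep in-layer and diagonal edges, delete vertical ones'' determines the graph $G_T$ completely, and there are no extra in-layer edges to add. In $G_T$, for every edge $tt'$ of $T$, the zigzag $(t,0),(t',1),(t,2),(t',3),\dots$ has one vertex per layer of the (still valid) layering from $P$, so it is a geodesic.

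Now choose a matching $M$ of $T$ covering every non-leaf vertex. Such an $M$ exists when $|V(T)|\ge 3$: a set $S$ of non-leaves on one side of the bipartition sends at least $2|S|$ edges into a forest, so $|N(S)|>|S|$. Hall's theorem plus the Mendelsohn--Dulmage theorem then gives $M$. Partition $G_T$ as follows:
\begin{itemize}
\item for each $e\in M$, take its two zigzags, which together cover both columns of $e$;
\item make every remaining vertex a singleton; all of these lie in columns of unmatched leaves.
\end{itemize}
Let $T^\ast$ be the tree obtained by contracting $M$ and deleting unmatched leaves. The quotient is a subgraph of $T^\ast\boxtimes K_2$, plus vertices whose neighbourhoods lie inside the adjacent zigzag pair of a single edge of $M$. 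So it has treewidth at most $3$, for every tree $T$ and every length of $P$.

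Hence no choice of $T$, binary tree or otherwise, can work. The long vertical geodesics you worry about in your last paragraph are exactly the obstruction. A zigzag meets an entire column pair, and hence unboundedly many vertices of any grid-like part spanning many layers. So neither the $O(1)$ nor the $O(\log n)$ version of your key lemma can hold.

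The missing idea is to subdivide heavily rather than thin out a product. The paper takes $K_n'$, the complete graph with every edge subdivided $n$ times.
\begin{itemize}
\item \textbf{Row treewidth $1$.} $K_n'$ embeds in $S\boxtimes P$ with $S$ a subdivided star: place $v_i$ at $(c,p_i)$ and route each subdivided edge $v_iv_j$ through its own leg of the star.
\item \textbf{Geodesic treewidth does not drop.} Subdividing every edge $|V(G)|$ times cannot lower geodesic treewidth. For a geodesic $P'$ of $G'$, the set $P'\cap V(G)$ is a geodesic of $G$, since a shortcut in $G$ would give one in $G'$. Moreover, $G/\mathcal{P}$ is a minor of $G'/\mathcal{P}'$.
\item \textbf{Unboundedness.} Geodesics of $K_n$ have at most two vertices, so $\tw(K_n/\mathcal{P})\ge n/2-1$.
\end{itemize}

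Incidentally, your separator framework does go through on this family if you take $W$ to be the branch vertices. A geodesic of $K_n'$ contains at most two branch vertices and meets the interiors of at most three subdivided edges. A balanced separator of the branch vertices containing only $O(d)$ of them must hit the interiors of $\Omega(n^2)$ subdivided edges once $n\gg d$. The restriction-and-minor argument simply makes separators unnecessary.
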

In addition, it is known that determining the row treewidth is \NP-hard even for treewidth-2 graphs~\cite{biedl2023complexityembeddinggraphproducts}, so there is no \XP-algorithm parametrized in the treewidth of the input graph, unless $ \P = \NP $.
In contrast, we give such an \XP-algorithm for determining the geodesic treewidth.
In particular, this yields a polynomial-time algorithm for the geodesic treewidth of treewidth-2 graphs.

\begin{restatable}{theorem}{OPTriandGS}\refcommand{sec:XPAlgorithm}\label{thm:OPTriandGS}
    There is an $O(n^{2k+3} f(k))$-time algorithm that decides for a graph $ G $ of treewidth $ k $ and $ d \in \mathbb{N}$ whether $G$ has geodesic treewidth at most $d$, where $ f $ is a function.
\end{restatable}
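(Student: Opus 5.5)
We would use dynamic programming over a nice tree decomposition of $G$ of width $O(k)$, which can be computed in time $f(k)\cdot n$. The central difficulty is that geodesic treewidth requires two different tree-like structures at once. The first is the partition $\mathcal{P}$ into geodesics, which lives on $G$. The second is a tree decomposition of width at most $d$ of $G/\mathcal{P}$. The second structure need not be aligned with the decomposition of $G$.

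\textbf{Step 1: reduce to $d \le k$.} Contracting parts cannot increase treewidth beyond $k$: taking every part to be a single vertex gives $G/\mathcal{P}=G$. Hence we only need to handle $d<k$. The running time $n^{2k+3}$ then suggests guessing, for each bag, a set of $O(k)$ vertices from the whole graph.

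\textbf{Step 2: characterize geodesic partitions locally.} A part $P$ is a geodesic if it is a path $u\ldots v$ with $|P| = \mathrm{dist}_G(u,v)+1$. Every part that meets a bag $B$ is determined, as seen from $B$, by two pieces of data:
\begin{itemize}
\item its two endpoints, which may lie anywhere in $G$;
\item the position of each bag vertex along the path, equivalently its distance to the endpoints.
\end{itemize}
At most $k+1$ parts meet a bag. So the state guesses, for each part meeting $B$, its endpoint pair. This costs about $n^{2(k+1)}$ choices, and the distances to the endpoints come from a precomputed all-pairs table. A vertex $x$ lies on a shortest $u$–$v$ path with a prescribed index exactly when $\mathrm{dist}(u,x)+\mathrm{dist}(x,v)=\mathrm{dist}(u,v)$. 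We would additionally store how the part's vertices already introduced below the bag are connected through the forgotten subgraph, as a connectivity pattern over bag vertices of bounded size. Consistency is then checked locally at every edge of $G$: consecutive path vertices must have consecutive indices, and no two vertices of one part may share an index. Finally, the part must really be the whole geodesic, so the endpoint vertices must appear in it. Completeness is ensured because every index between $0$ and $\mathrm{dist}(u,v)$ is realized exactly once, since each index can only be attained by vertices satisfying the distance equality. The extra factor $n^{O(1)}$ covers endpoints appearing and disappearing.

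\textbf{Step 3: certify $\mathrm{tw}(G/\mathcal{P})\le d$.} The quotient's vertices are the parts. Its edges are all visible in bags of $G$'s decomposition, because every edge between two parts is an edge of $G$ lying in some bag. Hence the decomposition of $G$ induces one of $G/\mathcal{P}$ of width at most $k$, namely the decomposition obtained by replacing each vertex by its part. Each part meets bags forming a connected subtree, since parts are connected. We would then test treewidth at most $d$ of the quotient by Courcelle-style or Bodlaender–Kloks-style DP over this induced decomposition. Its bags contain at most $k+1$ parts, so the characteristic sets of partial decompositions of width $d$ have size $f(k)$ and can be carried in the state.

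\textbf{Main obstacle.} The delicate point is Step 3 combined with Step 2: the states of the quotient treewidth test must be updated consistently while parts are merged. Two parts that are separated in a bag might become joined later, but since parts are fixed by their guessed endpoints, the identity of each part is known globally. That is the reason for guessing endpoints rather than local fragments. We expect that the Bodlaender–Kloks characteristics, indexed by endpoint pairs, close the argument. The total running time is $n^{2k+2}$ endpoint guesses times $n$ nodes times $f(k)$, which gives $O(n^{2k+3}f(k))$.
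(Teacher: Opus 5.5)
Your route differs from the paper's and is viable, but Step~2 as written does not certify that a label class is a geodesic. The check ``no two vertices of one part may share an index'' is not local, since two such vertices need not be adjacent or share a bag. Your completeness sentence only restricts which vertices can carry a given index; it does not make the carrier unique. Tracking connectivity does not help either. Take a path $x_0x_1\cdots x_8$ with $u=x_0$, $v=x_8$, and add vertices $y_1,y_2,z$ with edges $x_0y_1$, $y_1y_2$, $y_2x_5$, $x_4z$, $zx_8$. Then:
\begin{itemize}
\item $\mathrm{dist}(u,v)=6$, and every $x_i$ satisfies $\mathrm{dist}(u,x_i)+\mathrm{dist}(x_i,v)=6$;
\item the path is induced and connected, and every edge on it changes the index by exactly one;
\item yet the indices read $0,1,2,3,4,3,4,5,6$;
\item there is a width-$2$ decomposition in which neither $x_3,x_5$ nor $x_4,x_6$ share a bag.
\end{itemize}

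The missing ingredient is monotonicity, and it can be checked locally. When a vertex $x$ of the class labelled $(u,v)$ is forgotten, all its neighbours have been seen. So require that $x\neq v$ has exactly one class-neighbour of index one larger, and that $x\neq u$ has at least one class-neighbour of index one smaller. Following up-neighbours from $u$ gives a chain $C$ with one vertex per index, ending at $v$, the unique vertex of index $\mathrm{dist}(u,v)$. Following down-neighbours from any class vertex reaches index $0$, i.e.\ $u$. Walking back up, uniqueness of up-neighbours forces every vertex onto $C$. Hence the class is exactly a shortest $u$--$v$ path, and a non-empty class automatically contains $u$ and $v$.

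Two smaller fixes are needed. First, the decomposition must have width exactly $k$, not $O(k)$, or the exponent $2k+3$ is lost. Second, no extra $n^{O(1)}$ factor arises: a fresh label for an introduced vertex multiplies the child's $n^{2k}$ labellings by only $n^2$.

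Once repaired, the comparison with the paper is as follows.

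\textbf{The paper's route.} It never names parts globally. A geodesic that extends outside the bag is represented only by its intersection with the bag plus at most one outside endpoint. This gives $O(n^{k+1}f(k))$ configurations per bag, but they are paired at join bags. Partial geodesics can still merge later, so the paper cannot treat the quotient as a fixed graph. Instead it carries partial minor models of subgraphs of graphs in $\mathcal{G}_d$ whose branch sets may be disconnected (improper minors), updates them explicitly when parts merge, and requires connected branch sets only at the root.

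\textbf{Your route.} You pay $n^{2k+2}$ labellings per bag to name every part by its endpoint pair. In exchange, the quotient's vertex set is globally fixed. Along any accepted run the classes are connected, so forgotten parts never reappear, and the decomposition of $G$ induces a valid width-$k$ decomposition of the quotient. An off-the-shelf test can then run on it as a black box: Courcelle for excluding $\mathcal{G}_d$, or Bodlaender--Kloks. Joins only match identical labellings, so you again get $O(n^{2k+3}f(k))$. The approach also extends verbatim to any MSO-definable property of the quotient.

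One point you should state explicitly: the treewidth test must allow an edge between two boundary parts to be introduced later. A quotient edge between two parts that are both in a bag may be witnessed only by a $G$-edge higher up.
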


On the other hand, row treewidth and geodesic treewidth are closely related in other aspects. 
On the algorithmic side, we show that geodesic treewidth, too, is \NP-hard.

\begin{restatable}{theorem}{GSNPHigherKNewVersion}\refcommand{sec:GS:NpHard}\label{thm:GSNPHigherK}
    Determining if a graph has geodesic treewidth at most $k$ is \NP-hard for $k \geq 2$.
\end{restatable}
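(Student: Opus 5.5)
We prove \cref{thm:GSNPHigherK} by a polynomial-time reduction from the known \NP-hard problem of deciding whether a graph has row treewidth at most $k$ \cite{biedl2023complexityembeddinggraphproducts}. Alternatively, we reduce directly from a classical problem such as \textsc{3-Partition} or \textsc{Planar 3-SAT}, since the hardness proof there is also a gadget construction. The guiding idea is that geodesics behave rigidly in graphs with a lot of "grid-like" structure. In such graphs every geodesic partition with small quotient treewidth is forced to look like a partition into rows of a layering, so that geodesic treewidth and row treewidth coincide on the constructed instances.

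\textbf{Step 1: a rigid frame.} We build a frame graph, for example a sufficiently long and wide grid or a stack of grids, together with two-directional "rails". The goal is to show that in any partition $\mathcal{P}$ into geodesics with $\tw(G/\mathcal{P})\le k$, almost all parts run in one fixed direction. If parts crossed in both directions, the quotient would contain a large grid minor, since the sizes of the gadgets exceed $k$. We will prove this forcing lemma with a counting or minor argument: a geodesic meets each row or column in a bounded way, so many transversal geodesics produce a $(k+2)\times(k+2)$ grid minor in the quotient.

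\textbf{Step 2: encoding.} Into this frame we attach gadgets encoding the instance, such as clause and variable gadgets, or the bins and items for \textsc{3-Partition}. The forced direction of the geodesics means that each gadget must be covered by rows in one of a few ways. Each way corresponds to a truth value or an assignment. A satisfying assignment then yields a partition whose quotient is a small-width structure: we exhibit an explicit tree decomposition of width $k$, padding with cliques or apex-like attachments to reach each $k\ge2$. Conversely, if the instance is unsatisfiable, every choice creates a local obstruction in the quotient, for instance a $K_{k+2}$ minor.

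\textbf{Main obstacle.} The hardest step is the soundness direction. Geodesics need not be aligned with any layering, so in principle they could "cheat" with diagonal or bending shortest paths inside the gadgets. I will first try to choose gadgets made of grids with carefully subdivided long edges, so that geodesics are essentially unique between far-apart vertices. I will then verify that any deviation merges parts in a way that raises the quotient treewidth above $k$. Handling all $k\ge2$ uniformly is comparatively routine: we join a clique of appropriate size to the whole construction, or take a product-like padding, and check that the geodesic property is preserved.
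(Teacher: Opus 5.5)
\textbf{Gap 1: there is no construction and no correctness argument.} Your proposal is a plan rather than a proof, and what is missing is the substance of the theorem. The forcing lemma of Step~1 is never stated precisely, no gadget is specified in Step~2, and neither direction of the reduction is verified.

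The guiding premise is also doubtful. Row-treewidth certificates may use disconnected parts, and the two parameters genuinely diverge: subdividing every edge $|V(G)|$ times drops the row treewidth to $1$ but preserves unbounded geodesic treewidth (\cref{lem:PSwithSubDiv,lem:NoGS}). The known hard instances for row treewidth have treewidth $2$, where geodesic treewidth is polynomial-time decidable by \cref{thm:OPTriandGS}. So those instances cannot be used unchanged unless $\mathsf{P}=\NP$, and any coincidence of the two parameters on your frame would have to be proved from scratch.

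Nor are grids rigid for geodesics. Every monotone lattice path is a geodesic, so far-apart vertices are joined by exponentially many geodesics. A partition into parallel diagonal staircases is already a geodesic partition whose quotient is a path.

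\textbf{The missing idea.} The paper exploits exactly this freedom instead of suppressing it. In an $s$-$t$ path of length $2n+1$ with every other vertex duplicated, shortest $s$-$t$ paths are in bijection with truth assignments. The paper then adds clause cliques, plus five vertices adjacent to $l_1,l_2,s$ for each $2$-clause, and subdivides all new edges $2n$ times so that the bijection survives. An unsatisfied clause then yields a $K_4$ minor together with the $s$-$t$ geodesic. For a satisfying path $S$, the graph $G_4-S$ is a forest by Tovey's occurrence bounds (\cref{lem:SP-GSNP}). Six copies of $G_4$ placed on a twice-subdivided $K_4$ then force an $s$-$t$ geodesic into every partition with quotient treewidth at most $2$, since otherwise $v_1,\dots,v_4$ span a $K_4$ minor (\cref{lem:GSNP}).

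\textbf{Gap 2: the padding step for $k\ge 3$ fails as stated.} Joining a universal vertex or a clique to the whole construction makes the diameter at most $2$, so every geodesic afterwards has at most three vertices. Contracting parts of size at most three lowers the treewidth by at most a factor of about three. Hence a frame of large treewidth would have large geodesic treewidth regardless of the instance.

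Even with a single apex, the converse direction needs a copy of the construction in whose quotient the apex is a separate universal vertex. This fails if the apex's geodesic runs into that copy. \Cref{lem:GSInc} fixes both problems in two ways:
\begin{itemize}
\item it attaches the apex through paths subdivided $|V(G)|$ times, so geodesics of $G$ remain geodesics;
\item it uses three disjoint copies of $G$, so the single geodesic through the apex misses one copy entirely.
\end{itemize}
Restricting to the untouched copy turns quotient treewidth $k+1$ for $G^\star$ into quotient treewidth $k$ for $G$. Iterating lifts the hardness from $k=2$ to every fixed $k\ge 2$.
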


Complementing \cref{thm:psNotGs}, it is an interesting question whether geodesic structure implies product structure.
For the special case of graph classes with geodesic treewidth 1 we show that they also admit product structure, which is not the case in reverse, i.\,e., there are graph classes with row treewidth 1 but no geodesic structure.

\begin{restatable}{theorem}{GsImpliesPSForTrees}\refcommand{sec:PSvsGS:doesGsimplyPs}\label{thm:GsImpliesPSForTrees}
    Every graph with geodesic treewidth $1$ has row treewidth at most $7$.
\end{restatable}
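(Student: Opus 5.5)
Let $\mathcal{P}$ be a partition of $G$ into geodesics such that $T := G/\mathcal{P}$ is a forest. We may assume $G$ is connected and $T$ is a tree. The plan is to build a layering $\mathcal{L}$ and a partition $\mathcal{Q}$ of layered width $1$ with $\tw(G/\mathcal{Q}) \le 7$, by refining $\mathcal{P}$ rather than starting afresh.

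\textbf{Setup: root $T$ and take BFS layers.} Root $T$ at a part $P_0$ and fix a vertex $r \in P_0$. Take the BFS layering $L_i = \{v : \mathrm{dist}_G(r,v) = i\}$. Each geodesic $P \in \mathcal{P}$ is a shortest path, so consecutive vertices of $P$ differ by at most one in distance from $r$. Moreover, the distance from $r$ changes monotonically except for at most one "valley" and a bounded number of plateau steps. Hence a geodesic meets each layer in at most $2$ vertices, and more precisely it decomposes into at most two monotone subpaths, each of which has layered width $1$.

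\textbf{Key observation: where each part is entered.} For a non-root part $P$ with parent $P'$, every path from $r$ to $P$ passes through $P'$, because $T$ is a tree. So the distance function on $P$ is governed by distances to the attachment vertices in $P'$. The edges between $P$ and $P'$ form a set of vertices on the geodesic $P$, with neighbours on the geodesic $P'$. Since both are geodesics, the attachment pattern should be "interval-like". I expect $\mathrm{dist}(r,\cdot)$ on $P$ to be essentially $\min_a(\mathrm{dist}(r,a) + \mathrm{dist}_P(a,\cdot))$, taken over attachment vertices $a$, up to shortcuts through descendant subtrees.

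\textbf{Refine and bound the treewidth.} Split each part $P$ into its at most two monotone pieces $P^{\downarrow}$ and $P^{\uparrow}$, plus possibly the $O(1)$ vertices at the valley or plateau. Grouping these into $\le c$ parts of layered width $1$ per original part gives the partition $\mathcal{Q}$. The quotient $G/\mathcal{Q}$ is then obtained from the tree $T$ by blowing each node up into at most $c$ vertices, with edges only inside a blob or between parent and child blobs. A tree decomposition of $T$ with bags $\{P,\text{parent}(P)\}$ becomes one with bags of size $2c$, giving treewidth $\le 2c-1$. To reach $7$ we need $c=4$: the two monotone halves, the single valley vertex, and one extra vertex to absorb an off-by-one plateau. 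The counting must be done carefully so that the blown-up quotient remains valid.

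\textbf{Main obstacle.} The hard step is proving that a geodesic of $G$ has distance-to-$r$ profile with only $O(1)$ non-monotone pieces. A geodesic $P$ with endpoints $x,y$ satisfies $|\mathrm{dist}(r,u) - \mathrm{dist}(r,v)| \le 1$ for neighbours $u,v$. However, the profile along $P$ could a priori zigzag, for instance when shortcuts go through descendant subtrees of $P$ in $T$. I would first try to show that the distance to $r$ from any vertex $v \in P$ is realized by a path leaving $P$ only through its parent edge set. This uses that subtrees hanging below $P$ attach to $P$ only and therefore cannot shorten routes toward the root. I would then show that the attachment set to the parent yields at most two local minima; this is where the geodesic property of $P'$ enters. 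If this fails, the fallback is to use a rooted-BFS argument with $c$ slightly larger and accept a weaker constant, then optimize toward $7$.
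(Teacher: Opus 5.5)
\textbf{There is a genuine gap, and it is the step you flag as the main obstacle.} For a BFS layering from a fixed root $r$, the distance profile along a part of $\mathcal{P}$ can have arbitrarily many valleys. So one part can meet one layer in arbitrarily many vertices, and no constant $c$ in your refinement works: neither $c=4$ nor the fallback with $c$ \enquote{slightly larger}.

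Two parts already break your claim. Let $P_0$ be a long path with $r$ in its middle, and let $P_1=(P_1(-10),\dots,P_1(30))$ be a path. Add only two edges: from $P_1(0)$ to the vertex of $P_0$ at distance $10$ on one side of $r$, and from $P_1(20)$ to the vertex at distance $10$ on the other side. Every detour between attachment vertices has length $22>20$, so both paths are geodesics and $G/\mathcal{P}$ is an edge. Yet $\mathrm{dist}(r,P_1(k))=11+\min(|k|,|k-20|)$ is W-shaped, and every layer $L_i$ with $11<i<21$ contains four vertices of $P_1$.

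Now add a path $P_2=(P_2(-10),\dots,P_2(40))$ with edges $P_1(-5)P_2(0)$, $P_1(5)P_2(10)$, $P_1(15)P_2(20)$ and $P_1(25)P_2(30)$. These four vertices of $P_1$ all have distance $16$ from $r$, every detour still costs $+2$, and $G/\mathcal{P}$ is a path. But $\mathrm{dist}(r,P_2(k))=17+\min_{j}|k-10j|$ has four valleys, so $L_{20}$ contains eight vertices of $P_2$. Iterating this, with larger initial spacings, doubles the number of valleys at each level of $T$.

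Your lower-envelope formula and your near-isometry of the attachment to the parent are both correct. They do not control the BFS profile, though: the parent's profile can take the same value at many far-apart attachment vertices, and the child inherits a valley at each of them.

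\textbf{The missing idea is a layering adapted to $\mathcal{P}$ instead of BFS distances.} The paper proves by induction on $|\mathcal{P}|$, removing a leaf $P_\gamma$ of $T$ with neighbour $P_\delta$, that there is an aligned $4$-layering. In it every geodesic runs through consecutive layers in its own order, while edges between different geodesics may span up to four layers.
\begin{itemize}
\item The new geodesic $P_\gamma$ is placed so that one edge $v_iw_j$ to $P_\delta$ lies inside a layer, in one of the two mirror orientations.
\item The geodesic property (your near-isometry) makes every edge that does not cross $v_iw_j$ at most $2$-steep.
\item A shortcut argument for $P_\delta$ shows that if one orientation has a crossing edge that is at least $5$-steep, then every crossing edge of the other orientation is at most $4$-steep.
\end{itemize}
Merging blocks of four consecutive layers then gives an ordinary layering in which every part of $\mathcal{P}$ has at most four vertices per layer. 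Hence $G\subseteq T\boxtimes P\boxtimes K_4$ for a path $P$, and the row treewidth is at most $\tw(T\boxtimes K_4)\le 7$. Your final counting step, blowing up $T$ by a factor $c=4$ to get $2c-1=7$, is this same last step; what has to change is the layering that feeds it.
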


Lastly, we consider the row treewidth and geodesic treewidth of planar graphs. The best previously known lower bound for the row treewidth and geodesic treewidth of planar graphs is 3 \cite{ImprovedPlanarGraphProductStructureTheorem}.
For geodesic structure, we show a lower bound of 5, almost matching the upper bound of 6~\cite{biedl2023complexityembeddinggraphproducts}.

\begin{restatable}{theorem}{LbPlanarGraph}\refcommand{sec:GS:GSLowerBoundPlanar}\label{thm:LbPlanarGraph}
    There is a planar graph whose geodesic treewidth is at least $5$.
\end{restatable}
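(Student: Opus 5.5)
We construct a planar graph $G$ and show that for every partition $\mathcal{P}$ of $G$ into geodesics, the quotient $G/\mathcal{P}$ has treewidth at least $5$. We exhibit this by finding a $K_6$-minor in $G/\mathcal{P}$. More generally, we may find any treewidth-$5$ obstruction, such as a bramble of order $6$. The first step is to record a few easy facts about geodesics. A geodesic meets each BFS layer (from any root) in at most two vertices, and in a set of consecutive layers. A geodesic meets any shortest cycle $C$ of length $\ell$ in a path of length at most $\lfloor \ell/2\rfloor$. In particular, in a graph where every vertex lies at bounded distance from a fixed "centre", every geodesic has bounded length. So if $G$ has small diameter but many vertices, $\mathcal{P}$ must have many parts.

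The construction I would use starts from a planar "nested triangles" graph of small diameter, built in the spirit of the known lower bound of $3$ from \cite{PlanarGraphsQueueNumber}. Take a root triangle and recursively stack: inside each face place a large but short-diameter gadget, e.g. a vertex joined to many subdivided paths, or a high-degree apex attached to the face. Each new region is adjacent to all three boundary vertices, and distances stay at most a constant $D$. Concretely, I would iterate the construction $5$ levels deep with branching factor $N$ huge. Because geodesics have length at most $2D$, each part of $\mathcal{P}$ lives in a bounded region. A pigeonhole/Ramsey argument over the $N$ copies at each level then finds a copy whose three boundary vertices lie in three distinct parts $P_1,P_2,P_3$, none of which enters the chosen child region too deeply.

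The core of the argument is an induction building a clique minor in $G/\mathcal{P}$. The invariant is that at level $i$ we have a face whose boundary touches $i$ pairwise distinct parts forming a clique in the quotient. Inside that face we find a child copy, and within it a connected set of parts disjoint from the previous ones and adjacent to all of them. Contracting this set yields a new branch set. Adjacency follows since the inner gadget is adjacent to every boundary vertex. Disjointness follows because a geodesic from outside cannot penetrate deep into the gadget while staying shortest, thanks to the short-cycle fact: entering and leaving through the apex would give a shortcut. After five rounds we have a $K_6$-minor (or, if only $K_5$ survives, we add a grid-like gadget that raises treewidth), so $\tw(G/\mathcal{P})\ge 5$.

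The main obstacle is guaranteeing that a single geodesic cannot cover vertices on several boundary sides simultaneously. If it could, it would collapse two branch sets and lose a clique vertex. Planarity forbids a naive $K_6$, so the minor really must use contracted geodesics. The gadget must be designed so that every geodesic within it is short relative to the gadget, e.g. by making each boundary vertex connected to the interior via many disjoint paths of equal length. Then any geodesic hits a bounded fraction of those paths, and the rest remain available as connectors. I would first try making each inner gadget a deep stack of triangles with all boundary distances equal. I would then verify via the layer structure that a geodesic uses at most two vertices per BFS layer from the apex, which bounds how many connectors it can destroy.
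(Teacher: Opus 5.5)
There is a genuine gap, and it sits at the core of your plan: the $K_6$-minor you are aiming for cannot exist. Every part of $\mathcal{P}$ is a geodesic, hence a connected (even induced) path. So $G/\mathcal{P}$ arises from $G$ by contracting connected subgraphs and is a minor of $G$. If $G$ is planar, then $G/\mathcal{P}$ is planar, and it has no $K_5$-minor, let alone a $K_6$-minor.

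Your remark that ``planarity forbids a naive $K_6$, so the minor really must use contracted geodesics'' is exactly the mistaken step, because contracting geodesics never leaves the class of planar graphs. Your inductive invariant therefore breaks when you add a fifth branch set adjacent to four pairwise adjacent ones. The fallback ``if only $K_5$ survives, add a grid-like gadget'' is vacuous, since $K_5$ cannot survive either. You also cannot force an arbitrarily large grid minor, because planar graphs have geodesic treewidth at most $6$.

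What is needed is a specific \emph{planar} graph of treewidth $5$ as the target minor, together with a construction tailored to force it. The paper uses the cuboctahedron, assembled from three ingredients:
\begin{itemize}
\item two long cycles, coming from the fans at $v_0$ and $v_1$, using that a geodesic avoiding the centre meets a fan's path in at most three vertices;
\item stacked vertices, coming from the $z_k$-attachments, where one of $z'_k, z''_k$ avoids all geodesics meeting the fan;
\item the length-$6$ connecting paths between copies.
\end{itemize}
The initial triangle guarantees that two copies share no geodesic.

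Your supporting facts are also unreliable. The claim that a geodesic meets each BFS layer in at most two vertices is false. Take a path $p_0p_1p_2p_3p_4$ and join a root $r$ to $p_1$ and $p_3$ by internally disjoint paths of length $4$. Then $p_0p_1p_2p_3p_4$ is a geodesic, since any detour through $r$ has length at least $10$, yet $p_0, p_2, p_4$ all lie at distance $5$ from $r$. In general you only get at most $2t+1$ vertices in layer $t$, because vertices of layer $t$ are pairwise at distance at most $2t$.

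Finally, the disjointness step (``a geodesic from outside cannot penetrate deep into the gadget'') is asserted rather than proved. In the paper this role is played by explicit distance bookkeeping. Every newly added path is strictly longer than the existing distances it could shortcut, so distances are preserved, and every shortest path between two copies passes through their centres.
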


\renewcommand{\refcommand}[1]{}

\section{Product Structure does not imply Geodesic Structure}
\label{sec:PSvsGS:pSnotGS}

We show that product structure does not imply geodesic structure by observing how geodesic structure and product structure behave differently when subdividing edges. 

Let $\mathcal{G}$ be a class of graphs. We define $\mathcal{G}' = \{G' \mid G \in \mathcal{G}\}$ where $G'$ is the result of taking graph $G$ and subdividing each edge $|V(G)|$ times. 
This reduces the row treewidth.
This is somewhat in contrast to a result of Bose, Dujmović, Javarsineh, Morin, and Wood~\cite{Bose_2022} who separate layered treewidth from row treewidth by showing that subdividing edges \enquote{efficiently} reduces the layered treewidth of a graph but not the row treewidth.

\begin{lemma} \label{lem:PSwithSubDiv}
For every graph class $\mathcal{G}$, the class $\mathcal{G}'$ has row treewidth $1$.
\end{lemma}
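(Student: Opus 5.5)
We show that each $G' \in \mathcal{G}'$ embeds in $H \boxtimes P$ with $H$ a tree and $P$ a path. Equivalently, we give a layering of $G'$ and a partition $\calP$ of layered width $1$ whose quotient $G'/\calP$ is a forest. Since the original vertices are few compared to the length of the subdivided paths, we can route each subdivided edge freely through the layers.

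Let $G$ have vertices $v_1,\dots,v_n$, and let $G'$ be obtained by replacing each edge $v_iv_j$ with a path of $n+1$ edges, so with $n$ internal vertices.

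\textbf{Layering.} Put each original vertex $v_i$ into layer $L_{i}$ (layers indexed by $1,\dots,n$, or shifted by a constant). Consider an edge $v_iv_j$ with $i<j$. Its subdivision path has $n$ internal vertices $x_1,\dots,x_n$, where $x_1$ is adjacent to $v_i$. We must place these vertices so that consecutive vertices lie in the same or adjacent layers. The path has to climb from layer $i$ to layer $j$, which needs $j-i\le n-1$ steps, and it has $n+1$ edges available. So we let the path go up one layer per step, $x_t \in L_{i+t}$ for $t\le j-i-1$, and then stay in layer $j$ (or any layer adjacent to it) for the remaining internal vertices. Each edge then joins vertices whose layers differ by at most one, so this is a valid layering. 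It is a layering in the generalized sense, not necessarily a BFS layering, which suffices for the product-structure characterization.

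\textbf{Partition.} Make each original vertex $v_i$ a singleton part. For each edge $v_iv_j$, split its subdivision path into parts that each meet every layer at most once. The simplest choice makes every internal vertex a singleton, but then the quotient is $G'$ itself, which contains cycles whenever $G$ does. To kill the cycles we must merge parts across different subdivided edges. The naive choice is one part per layer, but that has layered width far more than $1$. Instead we use the freedom in routing and cut each subdivided path at a single point. Route the path so that it descends from $v_j$ and climbs from $v_i$, and let the two halves meet in a segment lying in one common layer. Then take one part per layer per side, so that for every original vertex $v_i$ all the path pieces near $v_i$ are grouped with $v_i$ in a way that forms a tree.

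A cleaner alternative, which we would try first, is the following. Take a single part $Q$ that contains exactly one vertex of each layer, and remove from every subdivided path one edge. This turns $G'$ into a forest $F$ whose components are stars of paths around each $v_i$. Assign the layers so that the components of $F$, each made one part per layer-vertex, yield a quotient that is a subdivided tree. The removed edges then attach to $Q$ only in a way that keeps the quotient acyclic, namely contracting $Q$ to a root.

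\textbf{Main obstacle.} The hard step is choosing the partition so that the quotient is a forest while keeping layered width $1$. Treewidth $1$ forbids every cycle, so the "cut edges" of all subdivided paths must be absorbed into a single structure. We would try to let all cut points lie on one part $Q$ that uses distinct layers: route each subdivided edge $e_\ell$, for $\ell=1,\dots,m$, so that one of its internal vertices lies in a private layer $\lambda_\ell$ and put that vertex into $Q$. Every other vertex is then a singleton part. The quotient is then $Q$ joined to trees hanging off it, hence a tree, provided each component of $G'-Q$ touches $Q$ at most once. This forces $m$ distinct layers, one per edge of $G$, and possibly more layers than $n$. We would therefore check whether $n$ subdivisions give enough room, and if not use $Q$-vertices in adjacent layers.
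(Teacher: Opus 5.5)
There is a genuine gap. You correctly isolate the obstacle, but you never produce a valid partition, and the one you commit to fails. Take $Q$ to consist of one internal vertex $q_e$ per subdivided edge, with all other vertices singletons, and pick an edge $e=v_iv_j$ of $G$ whose endpoints both have degree at least $2$ (e.g.\ any edge of $K_3$). The path has $n\ge 2$ internal vertices, so $q_e$ is non-adjacent to one endpoint, say $v_j$. The component of $G'-Q$ containing $v_j$ is then adjacent to $Q$ at the neighbour of $q_e$ on the $v_j$-side. It is also adjacent to $Q$ at a second, distinct vertex coming from another path at $v_j$ (possibly $v_j$ itself). Contracting $Q$ therefore closes a cycle, so your proviso ``each component touches $Q$ at most once'' is false.

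Moving $Q$-vertices between layers, as in your fallback, cannot rescue this. For $K_n'$ with $n$ large, no partition consisting of one part $Q$ plus singletons has layered width $1$ and a forest as quotient. The quotient is a forest only if every component of $K_n'-Q$ has exactly one vertex adjacent to $Q$. In the $2$-connected graph $K_n'$ this forces every component to be a single vertex, since otherwise that vertex is a cut vertex; so $Q$ must be a vertex cover. Such a $Q$ contains at least $\lfloor n/2\rfloor$ internal vertices of each of the $n(n-1)/2$ paths. A part of layered width $1$, however, has at most one vertex per occupied layer, and only $O(n)$ layers are occupied because $K_n'$ has diameter $O(n)$. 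The same diameter bound also rules out your $n(n-1)/2$ private layers $\lambda_\ell$.

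The missing idea is to use the \emph{original} vertices as the hub. Put $v_1,\dots,v_n$ into a single part; with $v_i\in L_i$ it has layered width $1$. Each subdivided path $v_i,x_1,\dots,x_n,v_j$ is then a loop at the hub, and you fold each loop into a pendant path of parts $A_{e,1},A_{e,2},\dots$ with $x_1,x_n\in A_{e,1}$. Each $A_{e,t}$ should meet every layer at most once and be adjacent in the quotient only to $A_{e,t-1}$ and $A_{e,t+1}$, where $A_{e,0}$ is the hub. The quotient is then a subdivided star.

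This is exactly the paper's proof. It shows $K_n'\subseteq S\boxtimes P$, where $S$ is a star whose edges are subdivided into $n$-vertex paths and $v_i\mapsto(c,p_i)$. The edge $v_iv_j$ is drawn inside its private arm $S_{i,j}\boxtimes P$, an $n\times n$ grid with diagonals, avoiding $(c,p_k)$ for $k\notin\{i,j\}$. For example, go out along row $i$, along a column to row $j$, and back along row $j$, using one diagonal step to fix the parity. The $n$ subdivision vertices are precisely what make such a route exist for every pair $i<j$, and $G'\subseteq K_n'$ for every $n$-vertex $G$ handles arbitrary classes.
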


\begin{proof}
    We consider a complete graph $K_n$ and its subdivision $K_n'$, and show that $K_n' \subseteq S \boxtimes P$ where $P$ is a path and $S$ is a subdivided star. In particular, the graph $S$ is a tree and, thus, $K_n'$ has row treewidth $1$. Let $P$ be the path on $n$ vertices $p_1, \dots , p_n$ and $S$ the result of taking a star with center $c$ and $n(n-1)/2$ leafs and subdividing each edge to become an $ n $-vertex path. We embed the graph $K_n'$ in $S \boxtimes P$ as follows. For the vertices $v_i \in V(K_n)$, we embed them as $(c, p_i)$. We assign each edge $v_iv_j$ with $i < j$ in $K_n$ a leaf $l_{i,j}$ of $S$. Let $S_{i,j}$ be the path from $l_{i, j}$ to $c$ in $S$. We embed the subdivided edge between $v_i$ and $v_j$ in $K_n'$ in the subgraph $S_{i, j} \boxtimes P \subseteq S \boxtimes P$ without using vertices $(c, p_k)$ for $k \notin \{i, j\}$. 
    Note that $ S_{i, j} \boxtimes P $ is an $ n \times n $-grid with diagonals and indeed contains the required $ p_i $-$ p_j $-path with $ n $ internal vertices.
    Thus, it holds that $K_n' \subseteq S \boxtimes P$. Since this holds for complete graphs, it also holds for all other graphs and $\mathcal{G}'$ has row treewidth $1$ for every graph class $\mathcal{G}$. 
\end{proof}

For geodesic treewidth, it is not the case that subdivisions necessarily decrease it.

\begin{lemma}\label{lem:NoGS}
    For a graph class $\mathcal{G}$ that does not admit geodesic structure, the class $\mathcal{G}'$ also does not admit geodesic structure.
\end{lemma}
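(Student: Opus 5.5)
Contrapositive: it suffices to show that if $G'$ has geodesic treewidth at most $k$, then $G$ has geodesic treewidth bounded by a function of $k$ (ideally at most $k$). Write $n=|V(G)|$; each edge $uv$ of $G$ becomes a path of length $n+1$ in $G'$. Fix a partition $\calP'$ of $G'$ into geodesics with $\tw(G'/\calP')\le k$. From $\calP'$ we build a partition $\calP$ of $G$ into geodesics whose quotient $G/\calP$ is a minor of $G'/\calP'$. Since treewidth is minor-monotone, this gives $\tw(G/\calP)\le k$.

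\textbf{Key observation.} A geodesic of $G'$ contains at most one branch vertex, i.e.\ at most one vertex of $V(G)$. Suppose a geodesic $Q$ of $G'$ contains two branch vertices $u,w$. Then $Q$ contains a $u$--$w$ subpath of length at least $n+1$ passing through further branch vertices only at multiples of $n+1$. Distances in $G'$ between branch vertices are exactly $(n+1)\,d_G$, so this is fine in itself; a naive claim of ``at most one'' is therefore false. The right statement is different: the branch vertices on a geodesic $Q$ of $G'$, in order, form a geodesic of $G$. Indeed, consecutive branch vertices on $Q$ are joined by a full subdivided edge, so they are adjacent in $G$. Moreover, since $d_{G'}(x,y)=(n+1)d_G(x,y)$ for branch vertices $x,y$, the subsequence is a shortest path in $G$. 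The branch vertices of any single part of $\calP'$ therefore form a geodesic of $G$ (possibly empty or a single vertex).

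\textbf{Construction.} Let $\calP$ consist of the sets $P\cap V(G)$ for $P\in\calP'$ that are nonempty. This is a partition of $V(G)$ into geodesics of $G$. To show that $G/\calP$ is a minor of $G'/\calP'$, take each part $P\in\calP'$ with no branch vertex; it lies in the interior of one or two adjacent subdivided edges, as discussed below. Contract it, in $G'/\calP'$, into a neighbouring part along its subdivision path. Concretely, for each edge $uv\in E(G)$, the parts meeting the interior of the subdivided path between $u$ and $v$ must be merged so that the part of $u$ becomes adjacent to the part of $v$. Assign each branch-free part to a branch-containing part in this way, and check that the resulting contractions are along connected sets of the quotient.

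\textbf{Main obstacle.} The delicate step is the previous one. A branch-free part $P$ is a geodesic avoiding $V(G)$, so it lies inside the interior of a single subdivided edge; any path between two subdivided edges passes through a branch vertex. This makes assigning such parts easy: merge each interior segment of the subdivided $uv$-path into the part containing $u$, say. The obstruction is parts containing branch vertices that also extend into the interiors of several subdivided edges. These are still geodesics, so they just contribute extra adjacencies, which only helps, since we need a minor, not an induced structure.

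I would formalize this by contracting, for each subdivided edge $uv$, all branch-free parts inside it into the part of $u$. Parts containing branch vertices stay as they are, restricted to their branch vertices. The result is a graph whose vertices are the parts of $\calP$ and which contains every edge of $G/\calP$: a subdivided $uv$ path now leads from the part of $u$ to the part of $v$, possibly through segments of other branch-containing parts, which yields adjacency or further edges. Taking a subgraph then shows $G/\calP$ is a minor of $G'/\calP'$. Hence $\tw(G/\calP)\le k$, so geodesic treewidth does not drop under the subdivision, and the lemma follows.
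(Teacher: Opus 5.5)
Your proposal is correct and follows the paper's proof: you restrict each geodesic of $\mathcal{P}'$ to $V(G)$, show the result is a geodesic of $G$, and conclude that $G/\mathcal{P}$ is a minor of $G'/\mathcal{P}'$. Your distance-scaling argument, with consecutive branch vertices on a geodesic joined by a whole subdivided edge, is the paper's shortcut argument made explicit. Your contraction of branch-free parts into an endpoint's part supplies the minor step that the paper only asserts. To make that step clean, replace your hedge about ``segments of other branch-containing parts'' with the observation that the interior of a subdivided $uv$-edge meets only the parts of $u$ and $v$ and branch-free parts. That observation is exactly what makes your contracted sets connected.
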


\begin{proof}
    Assume $\mathcal{G}'$ does admit geodesic structure with geodesic treewidth $c$. That is, for each $G' \in \mathcal{G'}$ there exists a partition $\mathcal{P'}$ of $G'$ into geodesics such that $G'/\mathcal{P'}$ has treewidth at most $c$. We consider a geodesic $P'\in \mathcal{P'}$ and define $P=P'\cap V(G)$. We show that $P$ is a geodesic in $G$. If $P$ is not a geodesic in $G$, then there exists a shorter path $S$ from the start of $P$ to the end of $P$. Let $S'$ be the path corresponding to the path $S$ in $G'$. However, since $G'$ is the result of subdividing each edge $|V(G)|$ times, it holds that $S'$ is a shortcut for $P'$. Since this is impossible, it holds that $P$ is a geodesic in $G$. Therefore, $\mathcal{P}=\{P'\cap V(G) \mid P'\in \mathcal{P'}\}$ is a partition of $G$ into geodesics. It holds that $G/\mathcal{P}$ is a minor of $G'/\mathcal{P'}$ and thus has treewidth at most $\tw(G'/\mathcal{P'})=c$. Thus, the geodesic treewidth of $G$ is at most $c$. Therefore, $\mathcal{G'}$ admitting geodesic structure implies that $\mathcal{G}$ admits geodesic structure.
\end{proof}

Note that with \cref{lem:PSwithSubDiv,lem:NoGS} we obtain a rich collection of examples that admit product structure with row treewidth 1 but not geodesic structure by taking any graph class without geodesic structure and subdividing sufficiently often, for instance complete graphs.

\begin{corollary}\label{cor:rtw1gtwInf}
    There are graph classes with row treewidth $1$ and unbounded geodesic treewidth.
\end{corollary}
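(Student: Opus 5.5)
The plan is to take $\mathcal{G}$ to be the class of all complete graphs $\{K_n \mid n \in \mathbb{N}\}$ and let $\mathcal{G}'$ be its subdivided version as defined above. By \cref{lem:PSwithSubDiv}, $\mathcal{G}'$ has row treewidth $1$. By \cref{lem:NoGS}, it then suffices to show that $\mathcal{G}$ itself does not admit geodesic structure, i.\,e., that the geodesic treewidth of $K_n$ is unbounded.

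For this I will determine the geodesic treewidth of $K_n$ directly. In $K_n$ every two distinct vertices are at distance $1$. Hence a geodesic has at most two vertices, so every part of a partition $\mathcal{P}$ of $K_n$ into geodesics is a single vertex or a single edge. Contracting these parts gives a complete graph on at least $\lceil n/2\rceil$ vertices, since contracting edges of a complete graph keeps it complete. Its treewidth is therefore at least $\lceil n/2\rceil - 1$, which is unbounded as $n$ grows. So $\mathcal{G}$ has no geodesic structure, and by \cref{lem:NoGS} neither does $\mathcal{G}'$.

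Together with \cref{lem:PSwithSubDiv}, $\mathcal{G}'$ is a class of row treewidth $1$ and unbounded geodesic treewidth, which proves the corollary.

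The only point needing any care is the claim that geodesics in $K_n$ have at most two vertices. This is immediate from the diameter being $1$: any path with three or more vertices joins its endpoints by a path longer than the edge between them. I therefore expect no real obstacle; the substantive work is already contained in the two preceding lemmas.
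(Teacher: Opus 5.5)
Your proof is correct and follows the paper's route exactly: subdivide the class of complete graphs, then combine \cref{lem:PSwithSubDiv} with \cref{lem:NoGS}. Your check that geodesics in $K_n$ have at most two vertices, so that $K_n$ has geodesic treewidth at least $\lceil n/2\rceil - 1$, supplies the detail the paper leaves implicit when it names complete graphs as a class without geodesic structure.
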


As an additional corollary we observe that for every graph class $ \mathcal{G} $, the class $\mathcal{G}'' = (\mathcal{G}')'$ is 1-planar since each edge of a graph $G \in \mathcal{G}$ is subdivided at least $|V(G)|^2$ times and thus can take $|V(G)|^2$ crossings. Hence, the 1-planar graphs include $ \mathcal{G}''$ for $ \mathcal{G} $ being the complete graphs, which do not admit geodesic structure. 
By \cref{lem:NoGS}, the latter transfers to~$ \mathcal{G}''$.
\begin{corollary}\label{cor:1planarGtwInf}
    The class of $1$-planar graphs does not admit geodesic structure.
\end{corollary}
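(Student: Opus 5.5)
The statement to prove is \cref{cor:1planarGtwInf}: the class of $1$-planar graphs does not admit geodesic structure. The plan is to take $\mathcal{G}$ to be the class of complete graphs, build $\mathcal{G}'' = (\mathcal{G}')'$, and prove two things. First, $\mathcal{G}''$ has unbounded geodesic treewidth. Second, every graph in $\mathcal{G}''$ is $1$-planar. Since a class containing a subclass of unbounded geodesic treewidth cannot have bounded geodesic treewidth, the corollary follows.

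For the first part, I begin by showing that complete graphs do not admit geodesic structure. In $K_n$ every geodesic has at most two vertices, since every pair of vertices is adjacent. So any partition $\mathcal{P}$ into geodesics has at least $n/2$ parts. Moreover, $K_n/\mathcal{P}$ is again a complete graph, because contracting parts of a complete graph yields a complete graph. Hence $\tw(K_n/\mathcal{P}) \ge n/2 - 1$, which is unbounded in $n$.

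Next, I apply \cref{lem:NoGS} once to conclude that $\mathcal{G}'$ does not admit geodesic structure. Its proof works verbatim for any class as input, so I apply it a second time, now to $\mathcal{G}'$, to conclude the same for $\mathcal{G}''$. One small point needs checking: in $(G')'$ each edge of $G'$ is subdivided $|V(G')|$ times. This is exactly the operation in the lemma applied to the graph $G'$, so the lemma applies directly.

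For the $1$-planarity, I draw $K_n$ in the plane with vertices in general position and straight-line edges. Then each edge crosses at most $\binom{n}{2}$ other edges, and any two edges cross at most once. In $G''$, each original edge of $K_n$ becomes a path with many subdivision vertices. The first subdivision turns it into a path of $n+1$ edges. In $G'$ we have $|V(G')| \ge n^2/2$, so the second subdivision splits each of those edges again into at least $n^2/2 + 1$ pieces. In total each original edge becomes a path with at least $\binom{n}{2}+1$ edges.

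I then place the subdivision vertices along the straight segment so that each crossing point lies in the interior of a distinct edge of the path. The crossings along a segment are finitely many points, so this is possible. Doing this for both edges involved in every crossing gives a drawing in which every edge is crossed at most once, so each $G''$ is $1$-planar.

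The only mildly delicate step is this counting and placement argument, together with the observation that \cref{lem:NoGS} can be iterated. Both are routine.
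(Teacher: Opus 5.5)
Your proposal is correct and follows the paper's argument. Subdividing complete graphs twice gives enough subdivision vertices on each original edge to absorb all of its crossings, so these graphs are $1$-planar, and \cref{lem:NoGS}, applied twice, carries the unbounded geodesic treewidth of complete graphs over to $\mathcal{G}''$. You fill in details the paper leaves implicit, namely the bound $\tw(K_n/\mathcal{P}) \ge n/2-1$ and the explicit placement of subdivision vertices; for the latter, also choose the straight-line drawing so that no three edges pass through a common crossing point.
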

Since 1-planar graphs admit product structure~\cite[Theorem 3]{ProductStructurekPlanarGraphs}, 
they provide another example for separating geodesic structure from product structure.
We remark that \cref{cor:1planarGtwInf} implies that many other beyond-planar graph classes that admit product structure, like $k$-planar graphs~\cite{ProductStructurekPlanarGraphs}, fan-planar graphs, fan-bundle planar graphs~\cite{ShallowMinorsUSW}, and $k$-matching planar graphs~\cite{hendrey2025structurekmatchingplanargraphs} also do not admit geodesic structure. \Cref{cor:rtw1gtwInf} as well as the mentioned beyond-planar graph classes certify \cref{thm:psNotGs}.

\psNotGs*

\section{XP-Algorithm for Computing the Geodesic Treewidth}
\label{sec:XPAlgorithm}

\newcommand{\tominor}{\ensuremath{\mathtt{AssignToMinor}}\xspace}
\newcommand{\consistent}[1]{\mathtt{Consistency}(#1)}
\newcommand{\components}{\ensuremath{\mathtt{Components}}\xspace}
\newcommand{\obssub}{\mathcal{S}_d}
\newcommand{\minorvertex}[2]{V_{#1}(#2)}

Biedl, Eppstein, and Ueckerdt~\cite{biedl2023complexityembeddinggraphproducts} show that computing the row treewidth is \NP-hard even for treewidth-$2$ graphs.
In contrast, we show that the geodesic treewidth of treewidth-$2$ graphs can be computed in polynomial time.
Moreover, we show that for all $k \in \mathbb{N}$ there is a polynomial-time algorithm that computes the geodesic treewidth of graphs with treewidth~$k$.

\OPTriandGS*

To show this theorem we use that treewidth-$d$ graphs can be characterized via forbidden minors, i.\,e., there is a set $ \obs $ such that a graph has treewidth at most $d$ if and only if it contains no graph in $\obs$ as a minor, and $ \obs $ can be computed in time dependent only on $d$~\cite{computing_obstructions2,computing_obstructions}.
While the best known upper bound on the size of $ \obs $ is doubly exponential in $O(d^5)$~\cite{computing_obstructions}, these lists are quite small for small $ d $.
For example, for $d=1$ we have $\mathcal{G}_1=\{K_3\}$ as a graph is a forest if and only if it does not contain $K_3$ as a minor, $\mathcal{G}_2=\{K_4\}$~\cite{BODLAENDER19981}, and $\mathcal{G}_3 $ contains $ K_5 $, the octahedron, the pentagonal prism graph, and the Wagner graph~\cite{forbidden_minors_3-trees,characterization_3-trees}.

To check whether $G$ admits a partition \calP into geodesics such that the quotient $ G / \calP $ does not contain a minor from $ \obs $, we construct a dynamic program on a nice tree decomposition%
\footnote{A \emph{nice tree decompositions} has leaf bags, introduce bags, forget bags, and join bags, see \cref{sec:Introduction:Preliminaries}.}
of $G$.
That is, we process the tree decomposition from bottom to top.
Before we describe our algorithm, let us introduce some notation.
We remark that when using the following definitions, we choose $ J \subseteq G / \calP $, where $ G $ is the input graph, \calP a partition of $ G $ into geodesics, and $ \mathcal{Q} $ is a partition of the quotient $ J $ which we aim to use to identify a minor of $ J $.
During the algorithm it will be convenient to think of vertices of $J$ as the subsets of $V(G)$ that are contracted to form the quotient. 
Thus, for this section we assume that for such a quotient $J \subseteq G / \calP $ we have $V(J) \subset 2^{V(G)}$.
Note that if each part of $ \mathcal{Q} $ induces a connected subgraph of $ J $, then $ J/\mathcal{Q} $ is a minor of $ J $.
However, in the course of the algorithm, the parts in $ \mathcal{Q} $ are not necessarily connected, which we handle with the following definitions.
For a quotient $J/\mathcal{Q}$ with $\mathcal{Q}$ being a partition of $J$, we refer to the part $Q\in\mathcal{Q}$ that is contracted to form a vertex $v \in V(J/\mathcal{Q})$ as the \emph{improper branch set} of $v$.
Note that improper branch sets do not have to be connected, and thus $J/\mathcal{Q}$ is not necessarily a minor of $J$, hence the name \emph{improper} branch set.
Next, let $(H,\components)$ be a tuple with $H$ being a graph and $\components$ being a function $V(H) \rightarrow 2^{2^{V(J)}}$, i.\,e., $\components$ assigns each vertex of $H$ a set of subsets of $V(J)$.
We call $(H,\components)$ an \emph{improper minor} of some graph $J$ if there is a partition $\mathcal{Q}$ of $J$ such that $J/\mathcal{Q}$ is isomorphic to $H$ and for each vertex $v \in V(J/\mathcal{Q})$ the improper branch set of $v$ induces a subgraph of $J$ whose connected components are precisely $\components(v)$.
That is, $H$ is a minor of $J$ if and only if there is an improper minor $(H,\components)$ of $J$ such that $|\components(h)|=1$ for all $h \in V(H)$.
In our algorithm we use improper minors to keep track of possible minors of a quotient of our input graph.
However, we remark that while it is important for us to maintain some improper minors with disconnected improper branch sets, at the end of the algorithm we check whether the improper minors are minors, so they should be thought of as potential minors.

During the algorithm, for each bag $B$ of the tree decomposition of $G$ and the subtree $T_B$ below $B$ (including $B$) we store the following:
First, we store some information about all possible partitions of $G[T_B]$ into geodesics, where $ G[T_B] $ is the graph induced by the bags of $ T_B $.
Clearly, we cannot store all of these partitions as the number of these partitions is exponential in $n$.
Instead, for each partition $\mathcal{P}_T$ of $G[T_B]$ into geodesics, we store a partition $\mathcal{P}_B$ of $B$ with some additional information.
For each geodesic $P_T\in \mathcal{P}_T$ that is not disjoint from $B$ we store a part $P_B$ in $\mathcal{P}_B$ that contains the vertices $V(P_T)\cap B$ that are ordered according to their order in $P_T$.
We refer to the first and last of these vertices as the \emph{border-vertices} of $P_B$.
Then, for each part $P_B$ with corresponding geodesic $ P_T $ in $ G[T_B] $ we store in a label attached to $P_B$ whether both end-vertices of $P_T$ are not in $B$, both end-vertices of $P_B$ are in $B$ or exactly one end-vertex $ v $ of $P_T$ is not in $B$. In the last case we also store $ v $ and the border-vertex of $P_B$ it is closest to.
\Cref{fig:DP_partitions} shows an example.

\begin{figure}
        \centering
        \begin{subfigure}[t]{0.33\textwidth}
            \centering
            \includegraphics[page=1]{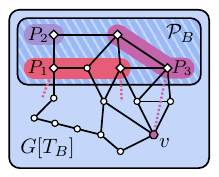}
            \subcaption{}
            \label{fig:DP_bag_partition}
        \end{subfigure}%
        \hfill
        \begin{subfigure}[t]{0.33\textwidth}
            \centering
            \includegraphics[page=2]{figures/DP_configurations_new.pdf}
            \subcaption{}
            \label{fig:DP_partition1}
        \end{subfigure}%
        \hfill
        \begin{subfigure}[t]{0.33\textwidth}
            \centering
            \includegraphics[page=3]{figures/DP_configurations_new.pdf}
            \subcaption{}
            \label{fig:DP_partition2}
        \end{subfigure}%
        \caption{
            \subref{fig:DP_bag_partition} An example for a bag $B$ (hatched blue), a graph $G[T_B]$, and a partition $\mathcal{P}_B$ of $B$ with the border-vertices of each part depicted as diamonds.
            The part $P_1$ has two end-vertices outside of $B$ (that we do not store) indicated by the dashed lines.
            The part $P_2$ has no end-vertices outside of $B$.
            The part $P_3$ has one end-vertex $v$ outside of $B$ depicted as an $8$-gon in the same color as $P_3$. It is associated with one of the border-vertices of $P_3$ indicated by the dashed line.
            \textbf{\sffamily(b,c)}
            Two different partitions $P_{T_1}$ and $P_{T_2}$ of $G[T_B]$ into geodesics that results in $\mathcal{P}_B$ if restricted to $B$.}
        \label{fig:DP_partitions}
    \end{figure}

Note that this gives us at most $2n$ possible labels
for each part $P_B$ as there are $ n - 1 $ options for the end-vertex, combined with two options for the border-vertex, and additionally the two options that none or both end-vertices are in $ B $.
Since each partition of $B$ has at most $ |B| \leq k+1$ parts, there are at most $(2n)^{k+1}$ different combinations of labels for a fixed partition of $B$.
While we need to store more things, everything in the following only depends on $k$.
Thus, since the number of partitions of $B$ also depends only on $k$, in total we obtain $O(n^{k+1}\cdot f(k))$ distinct configurations for some function $f$.

In addition to the information about a partition $\mathcal{P}_T$ of $G[T_B]$ we already have, we store information about improper minors of $G[T_B]/\mathcal{P}_T$.
To do this, let us define $\obssub$ as the set of all labeled subgraphs of graphs in $\obs$.
That is, we distinguish two isomorphic subgraphs if they are on different vertex-sets, e.\,g., a graph $H\in \obs$ has $|V(H)|$ distinct subgraphs that consist of a single vertex.
Recall that an improper minor of $J = G[T_B]/\mathcal{P}_T$ is a tuple $(H,\components)$ with $H$ being a graph and $\components$ being a function $V(H)\rightarrow2^{2^{V(J)}}$.
Our goal is to store all improper $\obssub$-minors of $G[T_B]/\mathcal{P}_T$, i.\,e., all improper minors $(H,\components)$ of $G[T_B]/\mathcal{P}_T$ with $H\in \obssub$.
However, since we want the number of these to depend only on $k$, we instead store something slightly different.
That is, for each improper $\obssub$-minor $(H,\components)$, we store the \emph{$B$-improper minor} which is the tuple $(H,\components')$ with $\components'(v)=\{ \{P \cap B \colon P \in C\} \colon C \in \components(v)\}$ for each vertex $v\in V(H)$, where $\components'(v)$ is a multi-set and $ | \components'(v) | = | \components(v) | $.
This means we only store the intersection of each geodesic in $\mathcal{P}_T$ with $B$ instead of storing the entire geodesic. 
Moreover, if there is a vertex $v\in V(H)$ such that $\{\emptyset\} \in \components'(v)$ and $|\components'(v)|> 1$ we say that $(H,\components')$ is an \emph{incompletable} $B$-improper minor and discard the tuple instead (cf.\ \cref{fig:DP_improper_minor}).
Otherwise, we refer to $(H,\components')$ as a \emph{completable} $B$-improper minor of $G[T_B]/\mathcal{P}_T$.
Note that for a completable $B$-improper minor $(H,\components')$, the set $\components'(v)$ contains each set only once for every $v \in V(H)$.
Indeed, the multi-set is only needed to properly recognize the case that $\components'(v)$ contains $\{\emptyset\}$ more than once for some vertex $v \in V(H)$.
This does not occur during the algorithm and, thus, we ignore it in the following and always assume that $\components'$ maps to a set.
Note that the number of completable $B$-improper minors depends only on $k$.

\begin{figure}
    \centering
    \includegraphics[page=1]{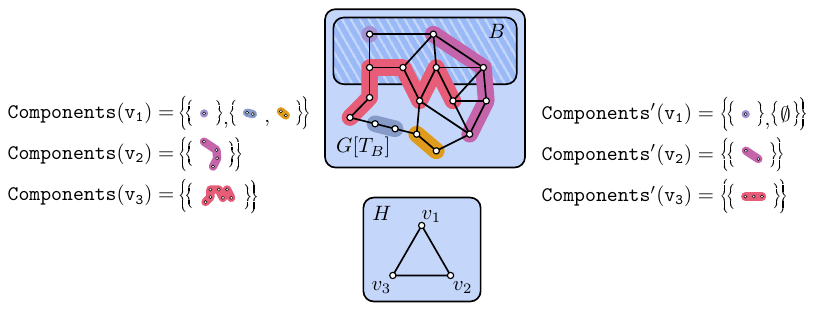}
    \caption{Middle: A graph $G[T_B]$ with a partition into geodesics and a graph $H$.
    Left: An improper minor $(H,\components)$.
    Right: The corresponding (incompletable) $B$-improper minor $(H,\components')$.
    Each colored element in $\components$ and $\components'$ is a set of vertices.}
    \label{fig:DP_improper_minor}
\end{figure}

In total for each partition $\mathcal{P}_T$ of $G[T_B]$ into geodesics with the set of improper $\obssub$-minors of $G[T_B]/\mathcal{P}_T$ we define a \emph{configuration} of the bag $B$ that consists of the following things:

\begin{itemize}
    \item A partition $\mathcal{P}_B$ of $B$ (corresponding to $\mathcal{P}_T$) with each part $P\in\mathcal{P}_B$ being ordered,
    \item a set $L$ of labels indexed by the parts $P\in\mathcal{P}_B$ that tells us that two or none end-vertices associated with $P$ are outside of $B$, or that associates a border-vertex of $P$ with a vertex outside of $B$, and
    \item a set $M$ of completable $B$-improper $\obssub$-minors (corresponding to the improper $\obssub$-minors of $G[T_B]/\mathcal{P}_T$). 
\end{itemize}

Taking the union over all partitions $ \calP_T $ of $ G[T_B] $ into geodesics, we obtain the set of all configurations of $ B $.
Before computing the configurations, we argue why it suffices to do so.

\begin{lemma}\label{lem:RootCorrect}
    Let $G$ be a graph with a nice tree decomposition with root bag $B$.
    Then, the graph $G$ has geodesic treewidth at most $d$ if and only if $B$ has a configuration with a set $M$ of completable $B$-improper $\obssub$-minors such that for each $(H,\components')\in M$ we have $H \notin \obs$ or $|\components'(v)|>1$ for a vertex $v\in V(H)$.
\end{lemma}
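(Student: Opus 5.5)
We prove both directions by relating the configurations of the root bag $B$ to partitions of $G$ itself. Since $B$ is the root, $G[T_B]=G$, so every configuration of $B$ comes from some partition $\mathcal{P}_T$ of all of $G$ into geodesics. Its set $M$ consists of the completable $B$-improper minors obtained from all improper $\obssub$-minors of $G/\mathcal{P}_T$. The key observation is a correspondence between the minors of $G/\mathcal{P}_T$ in $\obs$ and the entries of $M$ with $H\in\obs$ and $|\components'(v)|=1$ for all $v\in V(H)$. Once this correspondence is in place, both directions follow from the forbidden-minor characterization: $\tw(G/\mathcal{P}_T)\le d$ holds if and only if $G/\mathcal{P}_T$ has no minor in $\obs$.

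For the forward direction, suppose $G$ has geodesic treewidth at most $d$. Take a partition $\mathcal{P}_T$ into geodesics with $\tw(G/\mathcal{P}_T)\le d$ and consider its configuration of $B$. Let $(H,\components')\in M$ with $H\in\obs$, and suppose for contradiction that $|\components'(v)|=1$ for every $v$. By definition, $(H,\components')$ arises from an improper minor $(H,\components)$ with $|\components(v)|=|\components'(v)|=1$; here we use that the multi-set convention preserves cardinality. Then every improper branch set induces a connected subgraph of $G/\mathcal{P}_T$, so $H$ is a genuine minor of $G/\mathcal{P}_T$. This contradicts $\tw(G/\mathcal{P}_T)\le d$. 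Hence every entry satisfies $H\notin\obs$ or has some $|\components'(v)|>1$.

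For the backward direction, take a configuration of $B$ with the stated property and a partition $\mathcal{P}_T$ of $G$ into geodesics realizing it. Suppose $G/\mathcal{P}_T$ contains some $H\in\obs$ as a minor. Each branch set is connected, which gives an improper minor $(H,\components)$ with all $|\components(v)|=1$. Note that $H\in\obssub$, since $H$ is a subgraph of itself. Its $B$-improper minor $(H,\components')$ is completable, because incompletability requires $|\components'(v)|>1$ for some $v$. Therefore it was stored in $M$, and it has $H\in\obs$ and all $|\components'(v)|=1$, contradicting the hypothesis. So $G/\mathcal{P}_T$ is $\obs$-minor-free, hence has treewidth at most $d$, and $G$ has geodesic treewidth at most $d$.

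The main obstacle is the bookkeeping in the definition of configurations. We must check that $M$ is \emph{exactly} the set of completable $B$-improper images of all improper $\obssub$-minors of $G/\mathcal{P}_T$ for a single fixed $\mathcal{P}_T$, and that passing from $\components$ to $\components'$ never merges or loses components in a way that changes cardinality. In particular, two distinct connected components can have the same intersection with $B$, for example both empty. This is precisely why $\components'$ is defined as a multi-set with $|\components'(v)|=|\components(v)|$, and the argument above relies on that convention only through the equality of cardinalities.
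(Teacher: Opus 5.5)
Your proof is correct and follows the paper's argument: both directions rest on the correspondence between genuine $\mathcal{G}_d$-minors of $G/\mathcal{P}$ and completable entries of $M$ with $H\in\mathcal{G}_d$ and every component count equal to $1$, and both use that the multi-set convention preserves these counts. The only cosmetic difference is that you argue the backward direction directly from a partition realizing the configuration, whereas the paper states it as the contrapositive: geodesic treewidth at least $d+1$ forces such an entry in every configuration.
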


\begin{proof}
    Let $G$ have geodesic treewidth at most $d$.
    Then, by definition, there is a partition $\mathcal{P}$ of $G$ into geodesics such that $G/\mathcal{P}$ has treewidth at most $d$.
    As $\obs$ contains precisely the forbidden minors of treewidth $d$, the graph $G/\mathcal{P}$ contains no graph $H\in\obs$ as a minor.
    Thus, by the definition of improper minors, we have for every improper minor $(H',\components)$ of $G/\mathcal{P}$ that $H'\notin\obs$ or $|\components(v)|>1$ for a vertex $v\in V(H')$. 
    In the first case we are done, and in the second we have $ |\components'(v)| = |\components(v)| > 1 $.

    Now, let $G$ have geodesic treewidth at least $d+1$.
    Then, by the definition of geodesic treewidth, for every partition $\mathcal{P}$ of $G$ into geodesics the graph $G/\mathcal{P}$ has treewidth at least $d+1$.
    Thus, the graph $G/\mathcal{P}$ contains some graph from $\obs$ as a minor.
    Now, consider a configuration of $B$ with partition $\mathcal{P}_B$ of $B$ and a set $M$ of completable $B$-improper $\obssub$-minors.
    Our goal is to show that there is a completable $B$-improper minor $(H,\components')\in M$ with $H \in \obs$ and $|\components'(v)|=1$ for each $v\in V(H)$.
    To do this recall that by definition, a configuration results from a partition \calP of $ G[T_B] = G $.
    By assumption, we have $ \tw(G/\calP) \geq d + 1 $ and thus $ G/\calP$ has a minor $H \in \obs$.
    Now let $(H,\components)$ be the improper minor with $|\components(v)|=1$ for each $v\in V(H)$ obtained from the branch sets of $ H $, which exists by definition.
    Consider the corresponding $B$-improper minor $(H,\components')$.
    Since $|\components'(v)| = |\components(v)|=1$ for each $v \in V(H)$, we know that $(H,\components')$ is completable and, thus, that $(H,\components')\in M$. 
\end{proof}

Let us now come to a summary of how our algorithm computes the set of configuration for all bags of a nice tree decomposition of width $ k $.
A detailed description and a proof of correctness is in \cref{app:correctness}.
We start with a pre-computation of the distances between any two vertices, verify $ d \leq k $ (we are done otherwise), and compute the sets $ \obs $ and $ \obssub $.

Leaf bags are brute-forced.
For an introduce bag $B$ with child $B_1=B - \{v\}$ and a configuration with a partition $ \calP_1 $ of $B_1$, we consider every way to add $ v $. 
We can add $\{v\}$ as a new part, add it to an existing part (if it remains a geodesic) or merge two parts with $v$ in between (if this results in a geodesic). 
For a resulting partition of $B$ we construct a configuration of $B$ by iterating over all completable $B_1$-improper minors in the configuration of $B_1$ and consider every way to add $ v $ to them. 
Similarly, we handle forget bags by checking whether removing the vertex yields a completable $B$-improper minor.
For a join bag $B$ with children $B_1$ and $B_2$ we iterate over all pairs of configurations of $B_1$ and $B_2$ with partitions $ \calP_1 $, $ \calP_2 $.
If $ \calP_1 = \calP_2 $ and the labels agree, we check for all pairs of completable $B_1$- and $ B_2 $-improper minors whether we can combine them to a completable $B$-improper minor.

We remark that the join bags are crucial for the runtime, taking $ O(n^{2k+2} \cdot f(k) ) $ time for all pairs of $ O(n^{k+1} \cdot f(k) ) $ configurations.
With $ O(n) $ bags, we obtain a runtime of $O(n^{2k+3}\cdot f(k))$.
We discuss in \cref{rem:FPT} that the geodesics are a bottleneck as we obtain an $O(n \cdot f(k))$ algorithm if we want to partition $G$ into any induced paths instead of geodesics.

\section{Computing the Geodesic Treewidth is NP-Hard}
\label{sec:GS:NpHard}

We complement our algorithm by showing that, similar to row treewidth, it is \NP-hard to decide whether a graph has geodesic treewidth $2$.
For this, we give a reduction from a SAT variant that is known to be \NP-complete. Tovey~\cite{TOVEY198485} shows that SAT is \NP-complete if every clause contains $2$ or $3$ variables and every variable occurs at most once in a clause of size $3$ and at most twice in clauses of size $2$. We consider such an instance $I$ of SAT with clauses $C$ and variables $U$ and construct the following graph. We begin with a path $P$ of length $2n+1$ where $n=|U|$. Let the first vertex on this path be called $s$ and the last $t$. Beginning with the vertex adjacent to $s$ we now duplicate every other vertex on the path, meaning we add new vertices with the same neighborhood as the vertices we are duplicating. Let the resulting graph be called $G_1$ and let $\{(x_i, \overline{x_i}) \mid x_i \in P \wedge \overline{x_i} \text{ is the duplicate vertex of } x_i\}$ be the pairs of vertices that are duplicated. An example of such a graph $G_1$ is shown in \Cref{fig:NPG1} on the left. For an $s$-$t$-path $S$, we say $x_i \in U$ is true if and only if $x_i \in S$.
Thus, every path $S$ corresponds uniquely to a truth assignment of the variables $U$. 

For each clause $c \in C$ and literals $l_1, l_2 \in c$, we add the edge $l_1l_2$ to $G_1$ to construct $G_2$ as shown in \Cref{fig:NPG1} in the center.
That is, each clause in $C$ corresponds to a clique in $G_2$.
In addition, for each clause $\{l_1, l_2\} \in C$ of size $2$ we add five new vertices $p_1, \dots, p_5$ that are adjacent to $l_1, l_2,$ and $s$. The resulting graph $G_3$ can be seen in \Cref{fig:NPG1} on the right. 
Lastly, to construct $G_4$, every edge $e \in E(G_3)\setminus E(G_1)$ is subdivided $2n$ times.
As a result, every shortest path from $s$ to $t$ in $G_4$ only uses edges of $G_1$ and, thus, we retain the unique correspondence between shortest paths from $s$ to $t$ and truth assignments of the variables.

\begin{figure}[htb]
    \centering
    \includegraphics{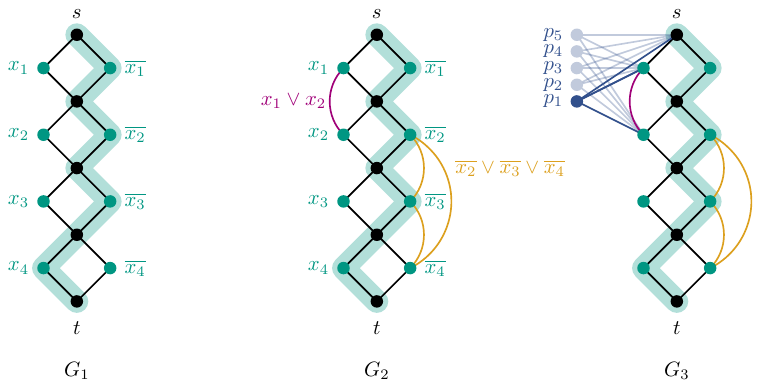}
    \caption{%
    Left: An example of $G_1$ with a shortest $s$-$t$-path highlighted in green that uniquely corresponds to the truth assignment $\varphi$ with $\varphi(x_4)=t$ and $\varphi(x_i)=f$ for $i \in [3]$.
    Middle: A corresponding $G_2$ with a SAT instance consisting of two clauses.
    Right: A corresponding $G_3$.
    }       
    \label{fig:NPG1}
\end{figure}

For a graph $G$ with $s,t \in V(G)$, let an \emph{$s$-$t$-partition} of $G$ into geodesics be a partition into geodesics that contains a shortest $s$-$t$-path. We show the following in \Cref{app:GS:NpHard}.

\begin{restatable}{lemma}{SPGSNP}\label{lem:SP-GSNP}
    The instance $I$ is satisfiable if and only if there exists an $s$-$t$-partition $\mathcal{P}$ of $G_4$ into geodesics such that $G_4/\mathcal{P}$ has treewidth at most $2$.
\end{restatable}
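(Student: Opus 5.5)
We prove both directions directly. Throughout, $S$ denotes the shortest $s$-$t$-path contained in the partition. By the construction of $G_4$, $S$ uses only edges of $G_1$ and therefore encodes a truth assignment: for each pair $(x_i,\overline{x_i})$, exactly one vertex lies on $S$ (the \enquote{true} literal) and the other lies off $S$ (the \enquote{false} literal). In the quotient, $S$ becomes a single vertex $v_S$. Every off-$S$ literal vertex has all its $G_1$-neighbours on $S$, so it is adjacent to $v_S$.

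\textbf{Satisfiable $\Rightarrow$ partition.}
1. Take the path $S$ corresponding to a satisfying assignment as one part.
2. Make each remaining literal vertex a singleton part, and each $p_j$ a singleton part.
3. For every subdivided edge, make its $2n$ internal vertices one part. This part is a geodesic: any detour between its endpoints leaves through an original vertex and must traverse at least one more long subdivided edge or $G_1$-path, so it is longer than $2n-1$.
4. It then suffices to show that $G_4/\mathcal{P}-v_S$ is a forest. A forest plus one vertex has treewidth at most $2$.

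To check the forest claim:
\begin{itemize}
\item Off-$S$ literals are pairwise non-adjacent in $G_1$.
\item Every size-$2$ clause has a true literal, which lies in $v_S$. So its clause edge and the subdivided edges from $p_1,\dots,p_5$ to the true literal and to $s$ all attach to $v_S$. Each $p_j$ therefore has at most one path to an off-$S$ vertex left in $G_4/\mathcal{P}-v_S$, which creates no cycle.
\item In a size-$3$ clause at most two literals are false, so it contributes at most one subdivided path between two off-$S$ literals.
\item Each variable occurs in at most one size-$3$ clause, so these off-off paths form a matching.
\end{itemize}
Hence the remaining graph is a forest.

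\textbf{Partition $\Rightarrow$ satisfiable.}
Suppose some clause is unsatisfied by the assignment read off from $S$. We exhibit a $K_4$ minor in $G_4/\mathcal{P}$.

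Key step: no geodesic of $\mathcal{P}$ other than $S$ contains two distinct off-$S$ literal vertices $\overline{a},\overline{b}$. Their $G_1$-distance is at most about $2n$. Any path between them avoiding $S$ must traverse a full subdivided edge of length $2n+1$, so it is not shortest. Any path through $S$ cannot belong to a part other than $S$. The same distance argument shows that no part other than $S$ contains both $s$ and a vertex reaching $s$ only through a subdivided path, apart from pieces of that subdivided path itself.

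Case of a size-$3$ clause with all literals false: the three off-$S$ literals lie in distinct parts, pairwise joined by subdivided paths. Contracting along these paths inside the connected quotient gives a triangle. Together with $v_S$, which is adjacent to all three, this forms a $K_4$ minor.

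Case of a size-$2$ clause $\{l_1,l_2\}$ with both literals false: $l_1,l_2$ are joined by a subdivided path, both are adjacent to $v_S$, and each $p_j$ has subdivided paths to $l_1$, $l_2$ and $s\in S$. This is the main obstacle: a geodesic through $p_j$ may absorb parts of these paths, or even merge $p_j$ into the part of $l_1$ or $l_2$, destroying the branch sets.

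To handle it, observe that a single geodesic enters at most two of the three paths at $p_j$. Moreover, a geodesic can contain vertices of at most a bounded number of distinct $p_j$-gadgets: two $p$'s are at distance $2(2n+1)$ via $l_1$, $l_2$ or $s$, so a geodesic through two of them fixes its route. Counting how many $p_j$ can be \enquote{spoiled}, meaning merged with the part of $l_1$, of $l_2$, or of another $p_{j'}$, by the at most two parts containing $l_1$ and $l_2$ and their continuations, we get that among the five $p_j$ at least one remains clean. Its three paths then give connected branch sets. Taking $v_S$, the part of $l_1$, the part of $l_2$, and the clean $p_j$ with its paths yields a $K_4$ minor.

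In both cases $\tw(G_4/\mathcal{P})\ge 3$, a contradiction. Hence every clause is satisfied, and $I$ is satisfiable.
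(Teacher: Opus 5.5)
\textbf{Gap in the satisfiable direction.} Your overall strategy is the paper's: for a satisfying assignment, $S$ is an apex over a forest; for an unsatisfied clause, $S$, the geodesics of the false literals, and (for size-$2$ clauses) the geodesic of one untouched $p_j$ form a $K_4$ minor. However, step~3 is false. The $2n$ internal vertices of a subdivided \emph{clause} edge $l_1l_2$ need not form a geodesic.

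If $l_1,l_2$ are literals of $x_i,x_j$, then $d_{G_1}(l_1,l_2)=2|i-j|$, which can be $2$. So the two ends of the internal path are joined through $l_1$, $G_1$, $l_2$ by a path of length $2|i-j|+2$, which can be $4$. The internal path itself has length $2n-1$. Your justification (``must traverse \dots\ a $G_1$-path, so it is longer'') overlooks that $G_1$-paths between literals can be very short. Hence your $\mathcal{P}$ is not a partition into geodesics once $n\ge 3$. Only the paths at $p_j$ are safe.

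The repair is immediate, and it is what the paper does. Take $S$ as one part and every other vertex as a singleton. Then $G_4/\mathcal{P}-v_S=G_4-S$, and your counting shows this is a forest:
\begin{itemize}
\item each false literal has at most one neighbour off $S$ in $G_2$;
\item each $p_j$ has at most one neighbour off $S$ in $G_3$;
\item subdividing edges and attaching pendant paths preserves being a forest.
\end{itemize}

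\textbf{Incomplete count in the size-$2$ case.} Your phrases ``bounded number'' and ``fixes its route'' do not yield the bound you need. What is needed is the following.
\begin{itemize}
\item The part $P(l_1)$ can leave $l_1$ only into subdivided paths, because all $G_1$-neighbours of $l_1$ lie on $S$.
\item $d(l_1,p_j)=2n+1$ is realised only by the direct subdivided path.
\item A second $p_{j'}$ reached in the same direction from $l_1$ would give a subpath of length at least $3(2n+1)>d(l_1,p_{j'})$.
\end{itemize}
Hence each of $P(l_1)$ and $P(l_2)$ contains at most two of the $p_j$, so some $p_j$ lies in neither. This is exactly why five vertices are used. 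Moreover, the part of that $p_j$ cannot reach $s$, another $p_{j'}$, $l_1$ or $l_2$ without passing through $l_1$, $l_2$ or $s$. So it is distinct from $S$, $P(l_1)$ and $P(l_2)$, which gives the $K_4$ minor.
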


\begin{proof}[Proof sketch]
    If $I$ is not satisfiable then for every $s$-$t$-partition with shortest $s$-$t$-path $S$ there is a clause $c \in C$ such that no literal of $c$ is in $S$. It can be seen that the geodesics containing the literals of $c$, the geodesics containing the corresponding vertices $p_1,\dots,p_5$ (if $c$ is a clause of size $2$), and the $s$-$t$-geodesic form a $K_4$ minor.

    If $I$ is satisfiable then deleting the vertices of an $s$-$t$-path $S$ corresponding to a satisfying truth assignment results in the disjoint union of some trees and paths.
    Thus, for any partition $\mathcal{P}$ of $G_4$ into geodesics that contains $S$, the graph $G_4/\mathcal{P}$ has treewidth at most $2$.
\end{proof}

Finally, we construct $G$ by subdividing a $K_4$ with vertices $ v_1, \dots, v_4 $ twice and replacing the six edges whose endpoints are subdivision vertices by copies of $ G_4 $ as shown in \cref{fig:NPG}.

\begin{figure}
    \centering
    \includegraphics{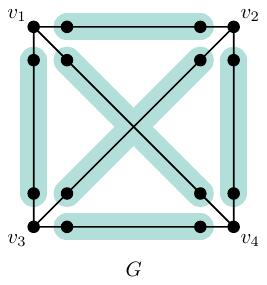}
    \caption{$K_4$ with vertices $v_1, \dots v_4$ where each highlighted edge gets replaced by a copy of $G_4$.}
    \label{fig:NPG}
\end{figure}

\begin{restatable}{lemma}{GSNP}\label{lem:GSNP}
    The graph $G$ has geodesic treewidth at most $2$ if and only if $I$ is satisfiable.
\end{restatable}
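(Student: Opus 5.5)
We prove both directions using \cref{lem:SP-GSNP} and the $K_4$ frame. Let $G$ consist of the twice-subdivided $K_4$ on $v_1,\dots,v_4$, where each edge $a_e b_e$ between subdivision vertices has been replaced by a copy $G_4^e$, identifying $s$ with $a_e$ and $t$ with $b_e$.

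For the direction ``$I$ satisfiable $\Rightarrow$ geodesic treewidth at most $2$'', we build the partition copy by copy.
\begin{enumerate}
    \item In each copy $G_4^e$, take the $s$-$t$-partition $\mathcal{P}_e$ given by \cref{lem:SP-GSNP}.
    \item Put every $v_i$ into a singleton part.
    \item Check that each part is still a geodesic in $G$. Every $G_4^e$ is attached to the rest of $G$ only through the cut pair $\{a_e,b_e\}$. Any detour leaving $G_4^e$ must return via $v_i$-paths around the $K_4$, and the $K_4$ frame is long compared to the distance $2n+1$ inside $G_4$, so no shortcut exists. If the frame is too short, I would argue that the subdivisions are chosen appropriately, or only use the fact that the detour through the rest of $G$ has length at least that of the path inside $G_4$.
    \item Bound the treewidth of $G/\mathcal{P}$. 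The quotient is obtained by gluing the quotients $G_4^e/\mathcal{P}_e$, each of treewidth at most $2$, along the single vertex $S_e$ (the contracted $s$-$t$-path, which contains $a_e,b_e$), and attaching the pendant-like $v_i$ vertices. The singletons $v_i$ together with the $S_e$ vertices form a subdivided-$K_4$-like structure, but each $S_e$ is one vertex, so the frame collapses. The frame quotient has vertices $v_i$ and one vertex per subdivision edge, which is a subdivided $K_4$ with edges contracted.

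Step 4 as stated is not quite right. Contracting each $a_eb_e$ edge into one vertex turns the twice-subdivided $K_4$ into a once-subdivided $K_4$, which still contains a $K_4$ minor. So the $v_i$ must instead be put into parts together with neighbouring frame vertices, for instance $v_i$ with an adjacent subdivision vertex and an $s$-$t$-geodesic, making $v_i$–$a_e$–$\dots$–$b_e$ a geodesic. The frame quotient then becomes a tree-like structure of treewidth at most $2$. Each $G_4^e$ quotient is attached along at most two cut vertices, and clique sums of treewidth-$2$ graphs along at most $2$ vertices, with the required edge present, keep treewidth at most $2$.
\end{enumerate}

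For the converse, suppose $\mathcal{P}$ is a partition of $G$ into geodesics with $\tw(G/\mathcal{P})\le 2$, and aim for a contradiction if $I$ is unsatisfiable.
\begin{enumerate}
    \item Fix a copy $G_4^e$ that is contained in no part. Restricting $\mathcal{P}$ to $G_4^e$ gives geodesics of $G_4^e$, since a shortest path of $G$ within a part intersected with $G_4^e$ remains shortest by the cut-pair structure.
    \item The key claim is that in some copy, $a_e$ and $b_e$ lie in a common part that is an $s$-$t$-path. If no copy has this, then the six copies together with the $v_i$ give a $K_4$ minor in $G/\mathcal{P}$. In each copy, $a_e$ and $b_e$ lie in distinct parts that are connected inside the quotient of that copy, so each copy serves as a branch path of the $K_4$ minor.
    \item Given such a copy, the restriction to it is an $s$-$t$-partition. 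By \cref{lem:SP-GSNP} with $I$ unsatisfiable, its quotient contains a $K_4$ minor, which is a minor of $G/\mathcal{P}$, contradicting $\tw(G/\mathcal{P})\le 2$.
\end{enumerate}
The geodesics $\mathcal{P}$ can pass through only few copies, since a geodesic cannot go around the $K_4$. So at most a bounded number of copies fail to contain their own $s$-$t$ geodesic, and the six copies are needed to guarantee that either a frame $K_4$ appears or some copy is forced into an $s$-$t$-partition.

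The main obstacle is the case analysis in step 2 of the converse: showing that geodesics that do not join $a_e,b_e$ inside a single part still leave disjoint connected branch sets realizing the frame $K_4$, and handling geodesics passing through the $v_i$.
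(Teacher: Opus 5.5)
Your converse is on a workable track, but the satisfiable direction has a genuine gap: the partition you end up with still has treewidth at least $3$.

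\textbf{The satisfiable direction.} You correctly discard singletons. Your replacement, however, attaches each $v_i$ to only one adjacent $s$-$t$-geodesic (making $v_i$--$a_e$--$\dots$--$b_e$ a geodesic). This leaves $v_1,\dots,v_4$ in four pairwise distinct parts $P_1,\dots,P_4$.

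For every pair $i,j$, the copy between $v_i$ and $v_j$ then yields either a direct edge $P_iP_j$ in the quotient, or a path $P_i\,\sigma_{ij}\,P_j$. Here $\sigma_{ij}$ is the contracted $s$-$t$-path of that copy, and it is used by no other pair. So the quotient contains a subdivided $K_4$.

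Put differently, merging $v_i$ into an adjacent $\sigma_e$ only contracts a subdivision edge of the once-subdivided $K_4$ frame, which keeps the $K_4$ minor. For example, the choice $v_1\to\sigma_{12}$, $v_2\to\sigma_{23}$, $v_3\to\sigma_{34}$, $v_4\to\sigma_{14}$ produces a $4$-cycle with both diagonals subdivided. So your claim that the frame ``becomes a tree-like structure'' fails for every choice of this form.

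The missing idea is that some geodesic must contain \emph{two} of the $v_i$. The paper extends $S_{1,2}$ at both ends by $v_1,v_2$, and $S_{3,4}$ by $v_3,v_4$. These remain geodesics in $G$: any other $v_1$--$v_2$ route passes through two further copies of $G_4$ and so has length at least $2(2n+3)$. The frame quotient is then $K_{2,4}$ with sides $\{\sigma_{12},\sigma_{34}\}$ and $\{\sigma_{13},\sigma_{14},\sigma_{23},\sigma_{24}\}$. The rest of each copy's quotient hangs off the single cut vertex $\sigma_e$, not off two vertices, so every block has treewidth at most $2$.

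\textbf{The converse.} Split on the $v_i$, as the paper does; this also removes the ``main obstacle'' you flag. Your current dichotomy is not exhaustive. A part can contain $a_e$ and $b_e$ but neither $v_i$ nor $v_j$, and continue inside the copy beyond $s$ into a subdivided $s$--$p$ path. For such a part, the negation of your case~1 does not give ``$a_e$ and $b_e$ in distinct parts''.

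The paper instead asks whether two of $v_1,\dots,v_4$ share a geodesic.
\begin{itemize}
\item If $v_i$ and $v_j$ do, that geodesic contains the subpath $v_i,a_e,\dots,b_e,v_j$. Its restriction to that copy is therefore exactly a shortest $s$-$t$-path. Every other part meeting the copy stays inside it, since the only exits are $v_i$ and $v_j$. So \cref{lem:SP-GSNP} applies to a subgraph of $G/\mathcal{P}$.
\item If not, the $v_i$ lie in four distinct parts $P_1,\dots,P_4$. Every other part is confined to a single copy, because leaving a copy requires passing some $v_i$. Also $P_k$ cannot enter the copy between $v_i$ and $v_j$. Hence any $a_e$--$b_e$ path inside that copy gives a $P_i$--$P_j$ path in the quotient through parts private to that copy. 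This yields a subdivided $K_4$, exactly as in the previous paragraph.
\end{itemize}
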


\begin{proof}[Proof sketch]
    If $I$ is satisfiable we take an $s$-$t$ partition $\mathcal{P}$ of $G_4$ such that $G_4/\mathcal{P}$ has treewidth at most $2$ (\cref{lem:SP-GSNP}) for each copy of $G_4$ in $G$.
    Then, we include $v_1,\dots,v_4$ in some of the $s$-$t$-paths as depicted in \cref{s:fig:geodesic_extension}.
    This yields a partition $\mathcal{P}'$ into geodesics such that $G/\mathcal{P}'$ (\cref{s:fig:hardness_quotient}) is a $K_{2,4}$ with some attached copies of $G_4/\mathcal{P}$ and, thus, has treewidth at most $2$.

    If $I$ is not satisfiable, by \cref{lem:SP-GSNP} no $s$-$t$ partition $\mathcal{P}$ of $G_4$ yields $ \tw(G_4/\mathcal{P}) \leq 2 $.
    Then, $v_1,v_2,v_3 $, and $v_4$ are in distinct geodesics, giving a $ K_4 $-minor which has treewidth~3.
    \begin{figure}
        \centering
        \begin{subfigure}[m]{0.4\textwidth}
            \centering
            \includegraphics[page=1]{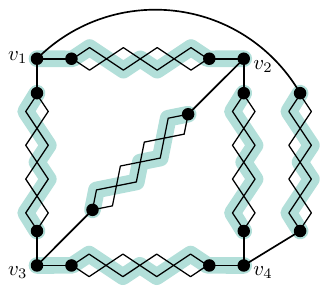}
            \subcaption{}
            \label{s:fig:geodesic_extension}
        \end{subfigure}%
        \hfill
        \begin{subfigure}[m]{0.56\textwidth}
            \centering
            \includegraphics[page=2]{figures/hardness_quotient.pdf}
            \subcaption{}
            \label{s:fig:hardness_quotient}
        \end{subfigure}%
        \caption{%
        \subref{s:fig:geodesic_extension} A sketch of $G$ with possibly extended $s$-$t$-paths highlighted in green, \subref{s:fig:hardness_quotient} the corresponding graph $G/\mathcal{P'}$. 
        The diamond vertices correspond to the green geodesics in \subref{s:fig:geodesic_extension}.
        }
        \label{s:fig:NPTD}
    \end{figure}
\end{proof}

\begin{corollary} \label{thm:GSNP}
    Determining whether a graph has geodesic treewidth at most $2$ is \NP-hard.
\end{corollary}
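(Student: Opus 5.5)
The corollary follows by a polynomial-time many-one reduction from the restricted SAT variant of Tovey~\cite{TOVEY198485}, which is \NP-complete. Given an instance $I$ with variables $U$ and clauses $C$ (each clause of size $2$ or $3$, each variable occurring at most once in a clause of size $3$ and at most twice in clauses of size $2$), I build the graph $G$ described above and output it as the instance of the geodesic-treewidth problem with threshold $2$. Correctness is exactly \cref{lem:GSNP}: $G$ has geodesic treewidth at most $2$ if and only if $I$ is satisfiable. So the only thing left to check is that the construction is polynomial in the size of $I$.

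The construction proceeds through the following stages, each of polynomial size.
\begin{itemize}
    \item $G_1$ is a path of length $2n+1$ in which every other vertex is duplicated, so it has $O(n)$ vertices.
    \item $G_2$ adds, for every clause, edges between its literals. Each clause has at most three literals, so this adds $O(|C|)$ edges.
    \item $G_3$ adds five vertices $p_1,\dots,p_5$ for each clause of size $2$, each with three incident edges, which is again $O(|C|)$ new vertices and edges.
    \item $G_4$ subdivides each edge of $E(G_3)\setminus E(G_1)$ exactly $2n$ times. There are $O(|C|)$ such edges, so $G_4$ has $O(n\,|C| + n)$ vertices.
    \item $G$ is a twice-subdivided $K_4$ in which six edges are replaced by copies of $G_4$, so $|V(G)| = O(n\,|C| + n)$.
\end{itemize}
Every step is an explicit local construction, so $G$ can be written down in time polynomial in $|U|+|C|$.

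No step here is a real obstacle, since the combinatorial difficulty is all inside \cref{lem:SP-GSNP,lem:GSNP}. The one point to state explicitly is that the subdivision count $2n$ is polynomial. Without this, the property that shortest $s$-$t$-paths stay inside $G_1$ could have been bought with an exponential blow-up, which would break the reduction. Since decision membership is not required, \NP-hardness follows.
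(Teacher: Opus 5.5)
Your proposal is correct and follows the same approach as the paper, which states the corollary as an immediate consequence of \cref{lem:GSNP} applied to the graph built from Tovey's SAT variant. Your explicit size bound of $O(n\,|C|+n)$ vertices for $G$ just spells out the polynomiality of the reduction that the paper leaves implicit.
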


Next we generalize the \NP-hardness result from 2 to $k$ and show that determining whether a given graph has geodesic treewidth at most $k$ is \NP-hard. For a graph $G$, we define the graph $G^{\star}$ as follows: connect three disjoint copies $G_1, G_2, G_3$ of $G$ by a universal vertex $v$ and for each edge adjacent to $v$ subdivide it $|V(G)|$ times. 

\begin{restatable}{lemma}{GSInc}\label{lem:GSInc}
     $G$ has geodesic treewidth $k$ if and only if $G^{\star}$ has geodesic treewidth $k+1$.
\end{restatable}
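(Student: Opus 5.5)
We prove the two inequalities $\mathrm{gtw}(G^\star) \le \mathrm{gtw}(G)+1$ and $\mathrm{gtw}(G^\star)\ge \mathrm{gtw}(G)+1$ separately. Write $N=|V(G)|$. Each path from $v$ to a vertex $u$ of a copy $G_i$ consists of $N+1$ edges, so every pair of distinct copies is at distance at least $2N+2$. This exceeds the diameter of every connected component of each copy.

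\textbf{Upper bound.} Take an optimal partition $\mathcal{P}$ of $G$ into geodesics with $\tw(G/\mathcal{P})=k$, and copy it into $G_1,G_2,G_3$. A geodesic of $G$ stays a geodesic in $G^\star$: any detour through $v$ has length at least $2N+2$, which exceeds every distance inside $G$ (components of $G$ can be treated separately). Put $v$ in the singleton part $\{v\}$. Each subdivision path is internally a path, so we make it one geodesic together with its endpoint vertex at $v$'s side, excluding $v$. Alternatively we make each subdivided edge minus its endpoints its own geodesic. That part is adjacent only to $v$ and to one part of some copy. The quotient is then three disjoint copies of $G/\mathcal{P}$ plus the vertex $\{v\}$ and pendant-like degree-$2$ vertices linking $\{v\}$ to parts. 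Contracting the degree-$2$ vertices only makes $\{v\}$ adjacent to parts, so a tree decomposition of each copy with $\{v\}$ added to every bag gives width $k+1$. Degree-$2$ vertices between $\{v\}$ and a part $P$ can be attached to a bag containing both. Hence $\mathrm{gtw}(G^\star)\le k+1$.

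\textbf{Lower bound.} This is the main obstacle. Take any geodesic partition $\mathcal{P}^\star$ of $G^\star$, and let $Q$ be the geodesic containing $v$. A geodesic through $v$ can enter at most two of the three copies, since it is a single path. So some copy, say $G_3$, is disjoint from $Q$. Restrict $\mathcal{P}^\star$ to $V(G_3)$. Every geodesic of $G^\star$ meeting $G_3$ other than $Q$ cannot leave $G_3$ and return, because that would pass through $v$. So it lies within $G_3$ plus part of a subdivision path, and its intersection with $G_3$ is a subpath. That subpath is a geodesic of $G_3$, as distances in $G_3$ agree with those in $G^\star$. This yields a geodesic partition $\mathcal{P}_3$ of $G\cong G_3$. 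Hence $G_3/\mathcal{P}_3$ has treewidth at least $k$ and contains a minor $H\in\mathcal{G}_{k-1}$; more usefully, it has treewidth exactly $\ge k$. Now the quotient $G^\star/\mathcal{P}^\star$ contains $G_3/\mathcal{P}_3$ as a subgraph together with the vertex $Q$.

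It remains to show that $Q$, after contractions, is adjacent to every part of $G_3$. This holds once we contract, for each part $P$ of $\mathcal{P}_3$, the subdivision paths from its vertices toward $v$. Those subdivision paths are themselves covered by geodesics that are disjoint from the other copies. Contracting all of them into $Q$, which is a connected union, makes $Q$ adjacent to every part of $G_3$, since every vertex of $G_3$ has a subdivided edge to $v$. The contracted sets are connected: each geodesic on a subdivision path is either contained in it or continues into $G_3$, and in the latter case we contract only the part outside $G_3$. Adding a vertex universal to a graph of treewidth $\ge k$ raises the treewidth to $\ge k+1$, because a universal vertex lies in every bag of an optimal decomposition. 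Hence $\tw(G^\star/\mathcal{P}^\star)\ge k+1$.

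The delicate point is exactly this contraction. A geodesic $P\in\mathcal{P}^\star$ may contain both vertices of $G_3$ and a tail on a subdivision path. Its tail then belongs to the part $P$ and not to $Q$. The edge from the remainder of that subdivision path, which goes toward $v$, to $P$ still provides the needed adjacency after contraction. I expect checking these cases to be where care is required.
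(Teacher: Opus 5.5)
Your proposal is correct and is essentially the paper's proof. For the upper bound you copy an optimal partition into the three copies, with $v$ and the subdivision vertices in separate parts. For the lower bound you restrict to a copy avoided by the geodesic through $v$ and use that a universal vertex raises treewidth by one. Your explicit contraction of the subdivision-path parts into the part of $v$ (tails staying with their own parts) makes rigorous what the paper just calls a ``subgraph''.

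Two minor caveats. First, as in the paper, the extra bags $\{v,P,s\}$ have width $2$, so the upper bound needs $k\ge 1$. In fact the lemma itself fails at $k=0$: for $G=K_2$, $G^\star$ is three $7$-cycles sharing $v$ and has geodesic treewidth $2$. Second, a universal vertex need not lie in every bag of every optimal decomposition. It does lie in every bag of the one obtained by restricting an optimal decomposition to the subtree of bags containing it.
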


\begin{proof}[Proof sketch]
    Due to the subdivisions, geodesics in $ G $ are also geodesics in $ G^\star $, so we may reuse the partition of $ G $ into geodesics and put $ v $ and the subdivision vertices into its own geodesic each.
    For the other direction, we consider a partition $ \calP^\star $ of $ G^\star $ into geodesics and find a suitable partition for $ G $ by restricting $ \calP^\star $ to one of the copies.
    In \cref{app:GS:NpHard} we show that the described partitions indeed yield quotients of treewidth $ k + 1 $, respectively $ k $.
\end{proof}

Now, the following theorem follows immediately from \Cref{thm:GSNP} and \Cref{lem:GSInc}.

\GSNPHigherKNewVersion*

\section{Geodesic Treewidth 1 implies Product Structure}

\label{sec:PSvsGS:doesGsimplyPs}

In this section, we show that graph classes with geodesic treewidth 1 have bounded row treewidth.
However, whether geodesic structure implies product structure in general remains an open question.
Note that this is in contrast to \cref{cor:rtw1gtwInf}, which shows that there are graphs with row treewidth 1 but unbounded geodesic treewidth.

For this, we define a \emph{$c$-layering} $\mathcal{L}$ of a graph $G$ as an ordered partition $(L_0, L_1, \dots)$ of $V(G)$ into \emph{layers} such that for every edge $vw \in E(G)$ with $v \in L_i$ and $w \in L_j$ we have $| i - j | \leq c$. 
We say a partition $ \mathcal{P} $ of some graph $ G $ has \emph{$c$-layered width} $d$ if there is a $c$-layering \calL such that each part of $ \mathcal{P} $ has at most $d$ vertices in each layer of $ \mathcal{L} $.
Note that a 1-layering is a layering and analogously 1-layered width coincides with layered width. 
Similarly to layered width~\cite{PlanarGraphsQueueNumber}, the notion of $c$-layered width turns out useful to upper-bound the row treewidth and thereby to prove \cref{thm:GsImpliesPSForTrees}.

\begin{restatable}{lemma}{GsImpliesPSForTreesLemma}\label{lem:GsImpliesPSForTrees}
    Let $G$ be a graph with a partition $\mathcal{P}$ of $G$ into geodesics such that $G/\mathcal{P}$ is a tree. 
    Then, \calP has $4$-layered width $1$.
\end{restatable}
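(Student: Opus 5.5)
The plan is to construct the $4$-layering directly from the tree structure of $T := G/\mathcal{P}$. I will place each geodesic so that its vertices lie in consecutive layers, one vertex per layer, and align each geodesic with its parent in $T$. Layered width $1$ is then automatic, and only the edge-span bound $4$ has to be checked. Root $T$ at some part $P_0$ and write each geodesic $P \in \mathcal{P}$ as $p_0, p_1, \dots, p_\ell$. Layer indices may be arbitrary integers; at the end we shift them to start at $0$. We process the parts top-down in $T$. Each part $Q$ with parent $P$ is assigned an offset $o_Q \in \mathbb{Z}$ and an orientation $\sigma_Q \in \{\pm 1\}$, and vertex $q_j$ is put into layer $o_Q + \sigma_Q j$. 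Edges inside a geodesic then span exactly $1$. Since $T$ is a tree, every edge of $G$ between two different parts joins a part to its parent, so no consistency conflicts arise and only parent–child edges need to be checked.

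The key metric fact is as follows. Let $P,Q$ be geodesics, and let $xa, yb \in E(G)$ with $x,y \in P$ and $a,b \in Q$. Then
\[ \bigl|\,|\mathrm{pos}_P(x)-\mathrm{pos}_P(y)| - |\mathrm{pos}_Q(a)-\mathrm{pos}_Q(b)|\,\bigr| \le 2, \]
because $\mathrm{dist}_G(x,y) = |\mathrm{pos}_P(x)-\mathrm{pos}_P(y)|$ (as $P$ is a geodesic) is at most $\mathrm{dist}_G(a,b)+2$, and symmetrically. In particular, a single vertex of $P$ has all its $Q$-neighbours within a window of three consecutive positions. Thus the bipartite edge set between $P$ and $Q$ is a near-isometry between index sets, with additive error $2$.

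To place a child $Q$ of $P$, choose among the $P$–$Q$ edges a pair $xa, yb$ maximising $|\mathrm{pos}_P(x)-\mathrm{pos}_P(y)|$. Fix $\sigma_Q$ so that the direction from $a$ to $b$ on $Q$ matches the direction from $x$ to $y$ on $P$ in layer terms. Fix $o_Q$ so that $a$ and $x$ lie in the same layer. For any other edge $zc$ with $z \in P$, $c \in Q$, compare it with both anchor edges using the inequality above. This bounds $|\mathrm{layer}(z)-\mathrm{layer}(c)|$ by $2$ whenever the orientations agree. When $z$ lies near $x$ or $y$, the sign of the displacement may flip, and that costs at most an additional $2$, which gives span at most $4$. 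If all $P$–$Q$ edges share endpoints within a window of size at most $2$, the bound is immediate for either orientation.

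The main obstacle is this orientation/sign analysis. One must show that with the extreme-pair anchoring, the placement of $c$ relative to $z$ deviates by at most $4$. I would argue this by a triangle-inequality case split, according to whether $z$ lies between $x$ and $y$ on $P$ and whether $c$ lies between $a$ and $b$ on $Q$. Maximality of the chosen pair excludes points far outside the anchors. The inequality applied to $(zc, xa)$ and $(zc, yb)$ then pins $\sigma_Q(\mathrm{pos}(c)-\mathrm{pos}(a))$ to within $2$ of $\mathrm{pos}(z)-\mathrm{pos}(x)$ up to the possible sign flip. Finally, shift all indices to obtain a genuine $4$-layering in which each geodesic meets each layer at most once.
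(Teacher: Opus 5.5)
Your proposal is correct, and it reaches the bound by a different key step than the paper. The overall structure is the same. The paper also inserts geodesics one at a time along the tree: it deletes a leaf $\gamma$ and re-inserts $P_\gamma$ next to its neighbour $P_\delta$. So in both proofs only edges between a part and its tree-neighbour need checking.

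The difference is how the new geodesic is placed. The paper aligns $P_\gamma$ along an \emph{arbitrary} edge $e=v_iw_j$ and considers both mirror images $\mathcal{L}_1,\mathcal{L}_2$. It first shows that edges not crossing $e$ are at most $2$-steep. It then uses an exchange argument: if $\mathcal{L}_1$ has a crossing edge $e_1$ of steepness at least $5$, every crossing edge of $\mathcal{L}_2$ lies between $e$ and $e_1$, and comparing it with $e_1$ shows it is at most $4$-steep. You instead anchor at an extreme edge and fix the orientation explicitly. This pre-selects the role of $e_1$, since your far anchor $yb$ is non-crossing by construction. Maximality also puts all $P$-endpoints on one side of the anchor layer, so the two-sided analysis and the step showing crossing edges lie between $e$ and $e_1$ disappear.

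What each buys: your route is deterministic with a very short verification. The paper's works from any edge and needs no search for an extreme pair, at the cost of an existence argument over the two orientations.

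Your sketch of the final step is loose; in particular a sign flip can only occur near $x$, not near $y$. It closes in a few lines, as follows.
\begin{itemize}
\item Write $\lambda$ for the layer index, normalise so that $\lambda(y)\ge\lambda(x)$, and set $D=\lambda(y)-\lambda(x)$.
\item Put $u=\lambda(z)-\lambda(x)\in[0,D]$, $w=\lambda(c)-\lambda(a)$ and $B=\lambda(b)-\lambda(a)$. The span of $zc$ is then $|u-w|$.
\item Your orientation rule together with the inequality for the pair $xa,yb$ gives $B\ge\max(0,D-2)$.
\item If $w\ge 0$, comparing $zc$ with $xa$ gives $|u-w|\le 2$.
\item If $w<0$, comparing $zc$ with $yb$ gives $D-u\ge B+|w|-2\ge D+|w|-4$. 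Hence $|u-w|=u+|w|\le 4$.
\end{itemize}

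Consequently your separate small-window case is unnecessary, because the computation only needs $B\ge 0$; when $a=b$, either orientation works. Your claim that this case works ``for either orientation'' is also false once the $P$-endpoints are two apart. For example, with edges $p_0q_0$ and $p_2q_4$ (both geodesics remain geodesics), the wrong orientation gives span $6$. Your orientation rule already handles this case correctly.
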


\begin{proof}[Proof sketch]
    For a partition \calP of some graph into geodesics, we call a $c$-layering \calL \emph{aligned} if for each geodesic $P\in \mathcal{P}$ the vertices of $P$ are embedded in consecutive layers of $\mathcal{L}$ according to their order in $P$.
    Note that only edges in geodesics are required to have their endpoints in two consecutive layers, whereas edges between distinct geodesics are allowed skip up to $ c - 1 $ layers.
    We prove the following stronger statement by induction on the number of geodesics in \calP.
    Note that since by definition, every aligned $c$-layering has only one vertex of each geodesic in each layer, \cref{s:cl:alignedLayering} implies that the 4-layered width of \calP is 1.
    \begin{claim}\label{s:cl:alignedLayering}
        For every graph $ G $ and every partition \calP of $ G $ into geodesics such that $ G / \calP $ is a tree, there is an aligned 4-layering. 
    \end{claim}
    
    If $|\mathcal{P}| = 1$, then there is an aligned $4$-layering of $G$ by putting each vertex into its own layer, ordered according to the single geodesic in \calP. 
    Next, consider a graph $ G $ with a partition \calP into at least two geodesics such that $ G / \calP $ is a tree.
    \NewDocumentCommand{\leaf}{}{\ensuremath{\gamma}}%
    \NewDocumentCommand{\parent}{}{\ensuremath{\delta}}%
    Let $\leaf, \parent \in V(G/\mathcal{P})$ with $\leaf$ being a leaf adjacent to $\parent$. Let $P_\leaf$ and $P_\parent$ be the geodesics corresponding to $\leaf$ and $\parent$, respectively, with $P_\leaf=(v_1, \dots, v_m)$ and $P_\parent=(w_1, \dots, w_n)$.
    Let $\mathcal{L}=(L_0, L_1, \dots)$ be an aligned $4$-layering of $G-P_\leaf$ which we obtain by induction. 
    We construct two aligned $c$-layerings $\mathcal{L}_1, \mathcal{L}_2$ of $G$ from $\mathcal{L}$ and show that one of them indeed is an aligned 4-layering.
    For this, note that since $\leaf$ is adjacent to $\parent$ in $ G / \calP $, there exist $i \in [m]$ and $j \in [n]$ such that $e=v_iw_j \in E(G)$.
    We denote the layer in $\mathcal{L}$ that contains $w_j$ by $ L_x $ and use the edge $e$ to align the path $ P_\leaf $.
    That is, we put $ v_i $ in layer $ L_x $ and obtain two options how to continue $ P_\leaf $ from there.
    The two options are shown in \cref{s:fig:AlignTwoPaths} and yield the layerings $\mathcal{L}_1 $ and $ \mathcal{L}_2$.
    For a layering $\mathcal{L'}$ let a \emph{$k$-steep} edge be an edge with endpoints in layers $L_a, L_b$ of $\mathcal{L'}$ and $|a - b| = k$.

    \begin{figure}
        \centering
        \includegraphics{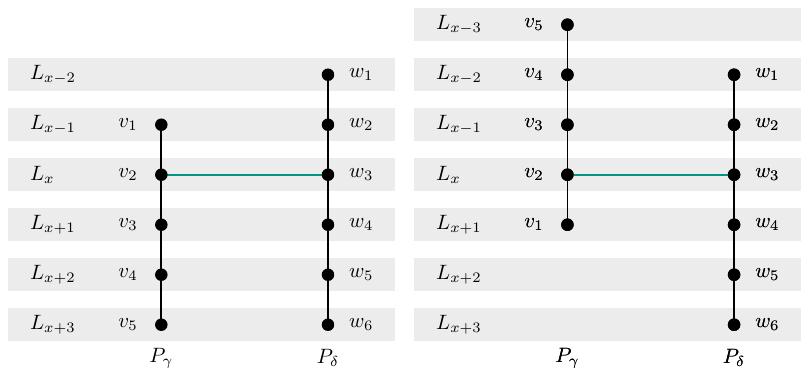}
        \caption{Layering $\mathcal{L}_1$ on the left and layering $\mathcal{L}_2$ on the right. The green edge is the alignment edge $e=v_iw_j$, where $ i = 2, j = 3 $ in this example.}
        \label{s:fig:AlignTwoPaths}
    \end{figure}

    We show that $\mathcal{L}_1 $ or $ \mathcal{L}_2$ is an aligned 4-layering, i.e., all edges are at most 4-steep.
    The key arguments are first that edges which do not cross $ e $ in the embedding of \cref{s:fig:AlignTwoPaths} are at most 2-steep (\cref{s:fig:NoSteepNonCrossingEdges}).
    And second, if $ \calL_1 $ contains a 5-steep edge $ e_1 $, then every edge $ e_2 $ crossing $ e $ in $ \calL_2 $ has its endpoints closer to $ e $ than $ e_1 $ (\cref{s:fig:NoNewSteepCrossingEdges}).
    Combining these arguments, we show in \cref{app:PSvsGS:doesGsimplyPs} that $ e_2 $ is at most 4-steep in $ \calL_2 $ by otherwise finding a shortcut for $ P_\parent $ using $ e_1 $ and $ e_2 $.
    \begin{figure}
        \centering
        \begin{subfigure}{0.21\textwidth}
            \includegraphics[page=2]{figures/alignedLayerings}
            \subcaption{}
            \label{s:fig:NoSteepNonCrossingEdges}
        \end{subfigure}
        \hspace{3em}
        \begin{subfigure}{0.21\textwidth}
            \includegraphics[page=1]{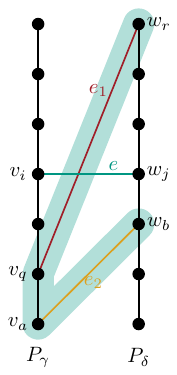}
            \subcaption{}
            \label{s:fig:NoNewSteepCrossingEdges}
        \end{subfigure}
        \caption{%
            Key arguments for \cref{lem:GsImpliesPSForTrees}.
            Recall that geodesics, in particular $ P_\leaf $ and $ P_\parent $, do not have shortcuts.
            \textbf{\textsf{(a)}} If a non-crossing edge $e'$ is $3$-steep, then together with edge $e$, this results in a shortcut, highlighted green.
            \textbf{\textsf{(b)}} Embeddings according to $ \calL_1 $ and assuming $a \geq q$. 
            Recall that $ e_2 $ crosses $ e $ in $ \calL_2 $ and thus does not cross $ e $ in $ \calL_1 $.
            Since $e_2$ is at most $2$-steep and $e_1$ is at least $5$-steep, the edges $e_1$ and $e_2$ result in a shortcut for $P_\parent$ that is highlighted in green.
        }
    \end{figure}
\end{proof}

It is now left to show that \cref{lem:GsImpliesPSForTrees} implies \cref{thm:GsImpliesPSForTrees}.

\GsImpliesPSForTrees*

\begin{proof}
    Let $G$ be a graph with a partition $\mathcal{P}$ of $G$ into geodesics such that $T = G/\mathcal{P}$ is a tree. 
    By \Cref{lem:GsImpliesPSForTrees}, there is a $4$-layering $\mathcal{L}$ of $G$ such that $\mathcal{P}$ has layered width $1$ with respect to $\mathcal{L}$. We partition the layers of $\mathcal{L}$ into consecutive blocks of size 4, where block $k$ contains layers $4k$ to $4(k+1)-1$, for $k \in \mathbb{Z}$, and merge the layers in a block to get a layering $\mathcal{L}'$. Note that $\mathcal{P}$ has layered width $4$ with respect to $\mathcal{L}'$. Thus $G \subseteq T \boxtimes P \boxtimes K_4$~\cite[Obs.~35]{PlanarGraphsQueueNumber} and therefore $G$ has row treewidth at most~$ \tw(T \boxtimes K_4) \leq (1+1) \cdot 4 - 1 = 7$. 
\end{proof}

\section{Lower Bound for Geodesic Treewidth of Planar Graphs}
\label{sec:GS:GSLowerBoundPlanar}

Ueckerdt, Wood, and Yi~\cite{ImprovedPlanarGraphProductStructureTheorem} show that every planar graph has row treewidth at most $6$ by providing a partition into so-called vertical paths, i.e., paths along a BFS-tree.
In addition, Dujmovi\'{c}, Joret, Micek, Morin, Ueckerdt, and Wood~\cite{PlanarGraphsQueueNumber} give a lower bound by constructing a planar graph with row treewidth $3$. 
It is asked by different authors~\cite{ImprovedPlanarGraphProductStructureTheorem,square_graphs} to close the gap between these two bounds for planar graphs or subclasses thereof.
Interestingly, the paths in~\cite{ImprovedPlanarGraphProductStructureTheorem} not only adhere to a layering, which suffices for product structure, but they are also geodesics.
That is, \cite{ImprovedPlanarGraphProductStructureTheorem} also provides an upper bound of 6 on the geodesic treewidth and, even stronger, a partition of any planar graph into paths that certifies both upper bounds simultaneously.
Similarly, we remark that the lower-bound construction in~\cite{PlanarGraphsQueueNumber} also works for geodesic treewidth.
In this section, we attack this gap between 3 and 6 by constructing a planar graph with geodesic treewidth 5.
We consider it an interesting open question whether our construction can be adapted to also improve the lower bound on the row treewidth or, most intriguing, to a graph that lower-bounds both parameters at the same time.

\LbPlanarGraph*

\begin{figure}
    \centering
    \includegraphics{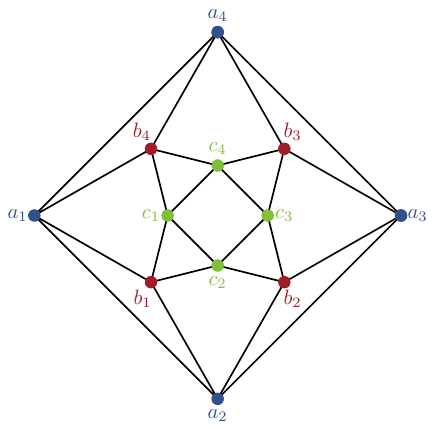}
    \caption{The cuboctahedron is known to have treewidth $5$.}
    \label{s:fig:cuboctahedron}
\end{figure}

We remark that the treewidth in \cref{thm:LbPlanarGraph} is indeed exactly 5, but we only prove the lower bound.
To do so, we construct a sequence of graphs $ G_1 \subseteq G_2 \subseteq G_3 $ and show that for every partition $ \mathcal{P} $ of $ G_3 $ into geodesics, the quotient $ G_3 / \mathcal{P} $ contains the so-called cuboctahedron (\cref{s:fig:cuboctahedron}) as a minor, a planar graph of treewidth 5.
While constructing $ G_3 $, we outline how to find the cuboctahedron step by step, and then provide a full proof 
in \cref{app:GS:GSLowerBoundPlanar}, where we also describe and analyze the construction in more detail.
For an overview, $ G_1 $ yields the blue and the green cycle in \cref{s:fig:cuboctahedron}, consisting of the $a$-vertices, respectively, $ c $-vertices.
Then $ G_2 $ adds the red $ b $-vertices to the green cycle, and finally $ G_3 $ provides the edges between $ b $- and $ c $-vertices.

During the construction, we make sure that every newly created path between already present vertices is strictly longer than the shortest path between them.
In particular, distances are preserved and we do not obtain new geodesics between previously introduced vertices.
Hence, observations concerning geodesics of some $ G_i $ also hold in subsequent supergraphs.

We start with a triangle whose vertices are called $ v_0 $, $ v_1 $, and $ v_2 $.
Note that for every partition of a triangle into geodesics, at least two of the three vertices are in distinct geodesics. 
We aim to lift this property to larger parts of our construction.
That is, we describe a graph of which we add three copies, one for each vertex of the initial triangle, and show that for every partition into geodesics, there are two copies that do not share geodesics.
In these two copies, we then find the desired cuboctahedron-minor.
\begin{figure}
    \centering
    \includegraphics{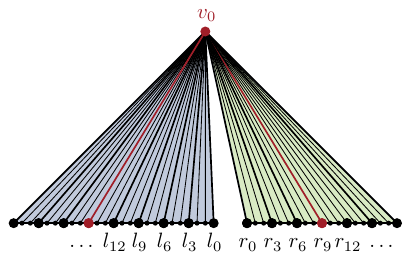}
    \caption{Two fans at $ v_0 $ in $ G_1 $, where the left fan is highlighted in blue and the right fan in green. A possible geodesic containing $v_0$ is drawn in red. Note that every geodesic has at most three vertices in any fan. 
    Thus, the thick vertices that are not red belong to pairwise distinct geodesics.}
    \label{s:fig:V0AndFans}
\end{figure}

To construct $ G_1 $ we add two fans at each $ v_i $, a \emph{left fan} and a \emph{right fan}, whose vertices are labeled as shown in \cref{s:fig:V0AndFans}.
Observe that two vertices whose distance in the path of the fan is at least 3 can only be in the same geodesic if their geodesic contains the center.
Since the fans are separated from the remainder of the graph by their center, this transfers to $ G_1 $.

\begin{restatable}{observation}{LBfans}\label{cl:lb_fans}
    For every partition of $ G_1 $ into geodesics and each of the three pairs of fans, the vertices $ r_{3k}, \ell_{3k} $ for $ k \geq 0 $ are in pairwise distinct geodesics, except for the vertices that are in the same geodesic as the center.
\end{restatable}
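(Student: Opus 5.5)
The plan is to first pin down distances inside a single fan, then use that the center is a cut vertex to transfer the statement to $G_1$, and finally derive pairwise distinctness by a counting argument along the fan path.

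\textbf{Step 1: distances in one fan.} Consider a single fan, say the right fan at $v_0$. Its rim path is $v_0$-adjacent: $r_0, r_1, r_2, \dots$, with every $r_i$ adjacent to the center $v_0$. So any two rim vertices are at distance at most $2$ via the center. Concretely, $\mathrm{dist}(r_i,r_j)=\min(|i-j|,2)$, and the same holds for the left fan.

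\textbf{Step 2: pass from the fan to $G_1$.} Since $v_0$ separates its fans from the rest of $G_1$, no other route can shorten these distances. The distances above therefore hold in $G_1$. Moreover, later construction steps only add longer paths, so they stay valid.

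\textbf{Step 3: a geodesic avoiding the center meets a fan in few vertices.} Take a geodesic $P$ that does not contain the center $v_0$. Every vertex of $P$ lying in the fans at $v_0$ belongs to a subpath of $P$ that stays inside one fan rim. This holds because $P$ cannot leave the rim without passing through $v_0$.

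\begin{itemize}
\item Such a subpath is a rim path of length $|i-j|$.
\item Its endpoints have distance $\min(|i-j|,2)$, so the subpath is a geodesic only if it has at most $3$ vertices.
\item Hence $P$ contains at most one maximal rim subpath in each fan. That subpath spans at most $3$ consecutive vertices, so it contains at most one vertex with index divisible by $3$.
\end{itemize}

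\textbf{Step 4: rule out one geodesic meeting both fans.} The remaining case is that $P$ has vertices in both the left and the right fan at $v_0$. Reaching one fan from the other requires passing through $v_0$, since the rim paths lie in different blocks joined only at $v_0$. This contradicts $v_0 \notin P$.

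So a geodesic avoiding the center contains at most one of the vertices $r_{3k},\ell_{3k}$. Only the geodesic through the center can contain several of them, which gives the observation.

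The one step that needs care is checking that the two fans at the same center share only the center. If they also share $r_0=\ell_0$ or are adjacent, the argument in Step 4 must be adjusted. I would handle this by reading the labeling in the figure and, if needed, treating the shared vertex $r_0=\ell_0$ as part of both rims. A walk from $r_1$ through $r_0$ to $\ell_1$ has $3$ vertices, while the endpoints $r_1,\ell_1$ are at distance $2$ via $v_0$. So any geodesic crossing between the fans still covers at most $3$ rim vertices around $r_0$, and only one of them has index divisible by $3$.
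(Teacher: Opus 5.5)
Your proof is correct and follows essentially the paper's own argument: all rim distances in a fan are at most $2$, and the center separates each fan rim from the rest of $G_1$. Hence a geodesic avoiding the center stays inside one rim and covers at most three consecutive rim vertices. Your worry about $r_0=\ell_0$ is unnecessary, because the paper builds the two fans at a center on distinct paths $r_0,r_1,\dots$ and $\ell_0,\ell_1,\dots$, so the fans meet only in the center.
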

We use \cref{cl:lb_fans} to find cycles which serve as a base to build the cuboctahedron.
Indeed, for every partition $ \mathcal{P} $ of $ G_1 $ in geodesics, the quotient $ G_1 / \mathcal{P} $ contains six arbitrarily large cycles, at least one for each fan, provided the fans are sufficiently large.

\begin{figure}
    \begin{subfigure}[c]{0.48\textwidth}
        \centering
        \includegraphics[page=1]{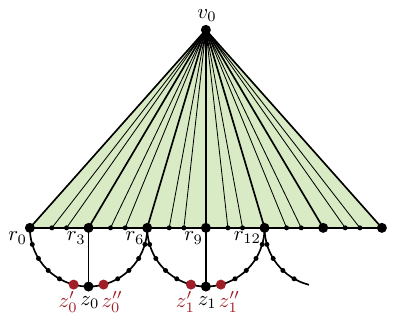}
    \end{subfigure}
    \hfill
    \begin{subfigure}[c]{0.48\textwidth}
        \centering
        \includegraphics[page=2]{figures/V0AndRightFan.pdf}
    \end{subfigure}
    \caption{Left: A right fan in $ G_2 $ with $ z_k $-attachments. Note that for all $k$, either $z'_{k}$ or $z''_{k}$ are part of a geodesic that contains no vertices in the fan. As an example, the $z_0$-attachment consists of the vertices $z_0, r_0, \dots, r_6$ and the length $6$-paths in between that contain $z_0'$ and $z_0''$.
    Right: The cycle with stacked vertices that is contained in the quotient $ G_2 / \mathcal{P} $ for every partition $ \mathcal{P} $.}
    \label{s:fig:V0AndRightFan}
\end{figure}

We extend $ G_1 $ to $ G_2 $ by adding gadgets, called \emph{$z_k$-attachments}, with three dedicated vertices $ z_k, z_k', z_k'' $ to the right fans as shown in \cref{s:fig:V0AndRightFan}.
Observe that for each $ k $, the vertices $ z'_k $ and $ z''_k $ have distance 3 to $ v_i $, and thus distance at most 4 to every vertex of their fan.
Hence, the shortest path to any vertex in the right fan of $v_i$ includes the vertex $r_{6k+3}$. 
Since a geodesic containing $r_{6k+3}$ can contain only one of $ z'_k $ and $ z''_k $, we obtain the following.

\begin{restatable}{observation}{LBzk}\label{cl:lb_zk}
    For every partition of $ G_2 $ into geodesics, each of the three right fans $ F $, and every $ k $, 
    the geodesic with $z'_{k}$ or the geodesic with $z''_{k}$ contains no vertices of~$ F $.
\end{restatable}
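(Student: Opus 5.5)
The plan is to argue by contradiction, using that geodesics cannot contain shortcuts. Fix a partition $\mathcal{P}$ of $G_2$ into geodesics, one of the three right fans $F$ with center $v_i$ and path vertices $r_0, r_1, \dots$, and an index $k$. Suppose that both the geodesic $P'$ containing $z'_k$ and the geodesic $P''$ containing $z''_k$ contain vertices of $F$. We then show that at least one of them must pass through $r_{6k+3}$. Since distinct vertices of one geodesic lie in one part, and each part is a single geodesic, we also show that no single geodesic can contain both $z'_k$ and $z''_k$ together with $r_{6k+3}$.

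The first step is a distance computation in the $z_k$-attachment, and it relies on the remark that later gadgets only add strictly longer paths, so distances computed locally remain valid. By construction, $z'_k$ lies on a path of length $6$ between $r_{6k}$ and $r_{6k+6}$ passing through $z_k$, or on a similar path. We read off that $d(z'_k, r_{6k+3}) = 2$ and $d(z'_k, v_i) = 3$, realized via $r_{6k+3}$. We then check that every shortest path from $z'_k$ to any vertex $u \in V(F)$ passes through $r_{6k+3}$. The competing routes are as follows:
\begin{itemize}
\item going around through $r_{6k}$ or $r_{6k+6}$ has length at least $3$ to reach the fan;
\item from there the route continues along the fan path or through $v_i$, and with the fan distances ($d \le 2$ within $F$ via the center) it is never strictly better, and we must ensure it is strictly worse or argue via uniqueness of the subpath.
\end{itemize}
This is the main obstacle: we must pin down the exact gadget lengths so that all shortest $z'_k$–$F$ paths are forced through $r_{6k+3}$, or at least so that any geodesic containing $z'_k$ and a vertex of $F$ contains $r_{6k+3}$. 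I would first try to prove that the subpath of the geodesic from $z'_k$ to its first vertex in $F$ must end at $r_{6k+3}$, because entering at any other $r_j$ costs at least $3$, while reaching the fan via $r_{6k+3}$ costs $2$. Here we also use that a geodesic, once it enters the fan, can be rerouted.

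The second step is to conclude. If both $P'$ and $P''$ meet $F$, then both contain $r_{6k+3}$, so $P' = P''$. This single geodesic contains $z'_k$, $z''_k$ and $r_{6k+3}$. By symmetry, both $z'_k$ and $z''_k$ are at distance $2$ from $r_{6k+3}$ on opposite sides, so the geodesic passes $z'_k \to r_{6k+3} \to z''_k$, or has $r_{6k+3}$ at one end. Either way, the subpath between $z'_k$ and $z''_k$ has length at least $4$ through $r_{6k+3}$. However, $z'_k$ and $z''_k$ are connected via $z_k$ by a path that is shorter, or equally short, and we must check strictness against the gadget. If they are equal, we instead use the fact that the geodesic would then contain a further fan vertex besides $r_{6k+3}$ on a non-shortest route. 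This contradiction gives the observation.
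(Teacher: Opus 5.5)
Your outline matches the paper's argument: a geodesic containing $z'_k$ and a vertex of $F$ must pass through $r_{6k+3}$, and no geodesic contains $z'_k$, $z''_k$ and $r_{6k+3}$. However, the first step carries all the content, you leave it unproved, and the way you propose to close it would not work.

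\textbf{The first step.} First, the gadget is misdescribed. $z_k$ is joined to $r_{6k}$ and to $r_{6(k+1)}$ by two \emph{separate} paths of length $6$, plus the edge $z_kr_{6k+3}$. So $z'_k$ is at distance $5$ from $r_{6k}$ along its path, and reaching $r_{6k+6}$ via $z''_k$ costs $7$.

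Second, your comparison of entry costs (``at least $3$'' elsewhere versus $2$ at $r_{6k+3}$) does not force a geodesic through $r_{6k+3}$. The target $u\in V(F)$ may be $2$ away from $r_{6k+3}$, so the route through $r_{6k+3}$ can cost $4$. An entry at $r_{6k}$ costing $3$ would then give a strictly shorter path to $u=r_{6k}$.

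Appealing to ``rerouting'' inside the fan does not help either. To pin down a \emph{given} geodesic you need that \emph{every} shortest $z'_k$--$u$ path contains $r_{6k+3}$, and that requires a strict inequality.

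The missing fact is as follows. Every vertex of $F$, the center included, is within distance $2$ of $r_{6k+3}$, so $d(z'_k,u)\le 4$. The interior of the $z_k$-attachment meets the rest of $G_2$ only in $r_{6k}$, $r_{6k+3}$ and $r_{6k+6}$. Hence any $z'_k$--$u$ path avoiding $r_{6k+3}$ has length at least $5$. Therefore every shortest $z'_k$--$u$ path contains $r_{6k+3}$. Since a subpath of a geodesic is a geodesic, every geodesic containing $z'_k$ and a vertex of $F$ contains $r_{6k+3}$; the same holds symmetrically for $z''_k$. Note that this is exactly where the length $6$ of the attachment paths is used: the margin is only $5>4$.

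\textbf{The second step.} It is fine in spirit but should be stated cleanly. The vertices $z'_k$, $z''_k$, $r_{6k+3}$ are pairwise at distance exactly $2$, all via $z_k$. Three vertices $a,b,c$ appearing in this order on a geodesic satisfy $d(a,c)=d(a,b)+d(b,c)$, which here would give $2=4$.

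Your claim that the $z'_k$--$z''_k$ subpath runs through $r_{6k+3}$ fails when $r_{6k+3}$ is the outer one of the three. Your worry that the path via $z_k$ might be ``equally short'' is moot, since $2<4$. So the fallback argument is unnecessary.
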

This allows us to extend the cycle from the quotient of $ G_1 $ by stacking vertices on the edges of the cycle as shown in \cref{s:fig:V0AndRightFan} (right).
The additional vertices are obtained from the geodesic containing $ z'_k $, respectively $ z''_k $, provided by \cref{cl:lb_zk}, whereas the other of the two vertices is not used.

\begin{figure}
    \centering
    \includegraphics{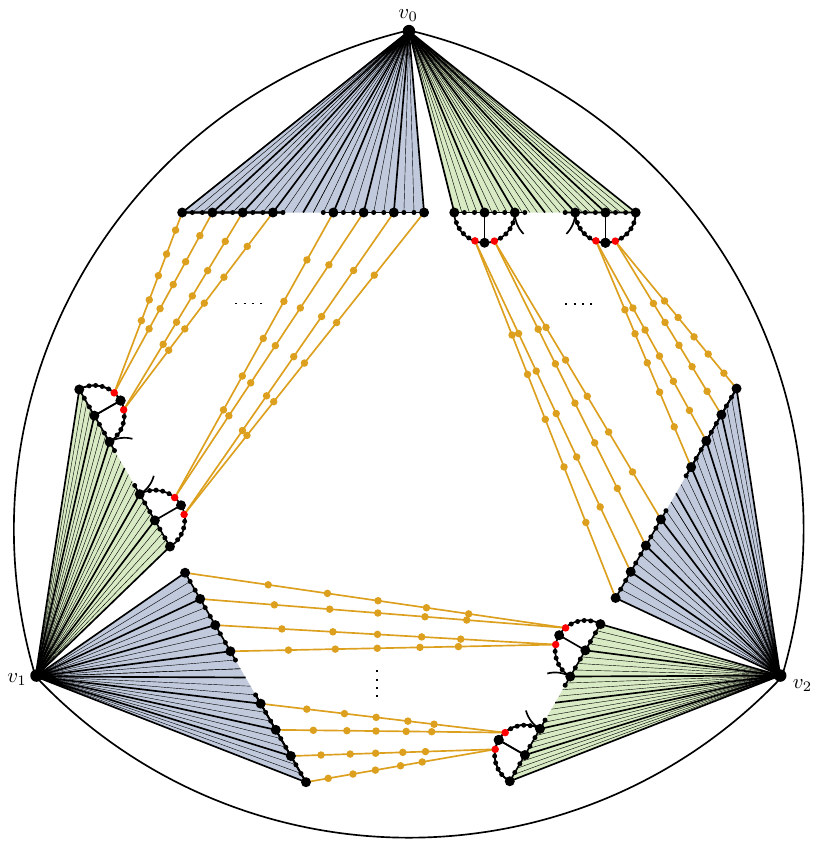}
    \caption{The planar graph $ G_3 $, which has geodesic treewidth 5. The right fans are highlighted in green and the left fans are highlighted in blue. The yellow paths are newly added in $ G_3 $.}
    \label{s:fig:LbPlanarGraph}
\end{figure}

Finally, to construct $ G_3 $ we connect the three pairs of fans with each other as shown in \cref{s:fig:LbPlanarGraph}.
To verify that distances are preserved, recall that in $ G_2 $ the distance between $ z'_k $, respectively $ z''_k $, to the center of its fan is 3, and hence at most 5 to any vertex in a fan.
Since the newly added paths in $ G_3 $ are of length 6, all paths between vertices of $ G_2 $ using a new vertex are strictly longer than the shortest path in $ G_2 $.
Moreover, for each two vertices in different copies, the shortest path contains the respective centers.

\begin{restatable}{observation}{LBcopies}\label{cl:lb_copies}
    For every partition of $ G_3 $ into geodesics, it holds that every geodesic that contains vertices of two distinct copies $ C_i $ and $ C_j $, $ i, j \in \{0, 1, 2\} $, also contains $ v_i $ and $ v_j $.
\end{restatable}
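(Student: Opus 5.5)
The plan is to argue via distances: any geodesic meeting two copies must pass through $v_i$ and $v_j$, because every path between the copies is forced through the two centers.

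\textbf{Structure of $G_3$.} First I will describe how $G_3$ is glued together. The copy $C_i$ consists of $v_i$ together with its left and right fans and the $z_k$-attachments of the right fan. In $G_2$ the copies meet only along the triangle $v_0v_1v_2$. In $G_3$, new paths of length $6$ are added. Each new path joins a vertex $z'_k$ or $z''_k$ of one copy to a vertex of the left fan of another copy.

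\textbf{Key claim.} For $x \in C_i$ and $y \in C_j$ with $i \neq j$, every shortest $x$-$y$-path passes through $v_i$ and $v_j$. Consequently it contains the triangle edge $v_iv_j$. Since a subpath of a geodesic is a geodesic, the claim implies the observation. Take a geodesic $P$ containing $x \in C_i$ and $y \in C_j$. Its subpath between $x$ and $y$ is a shortest $x$-$y$-path, so it contains $v_i$ and $v_j$.

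\textbf{Proof of the claim.} I will compare the two kinds of routes from $x$ to $y$.
\begin{itemize}
\item \emph{Through the centers.} The path going $x \to v_i \to v_j \to y$ has length $d(x,v_i) + 1 + d(v_j,y)$.
\item \emph{Avoiding a center.} Any other route must use at least one new yellow path of length $6$. Such a route has length at least $d(x,a) + 6 + d(b,y)$, where $a$ and $b$ are the endpoints of the yellow path.
\end{itemize}

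To compare these, I will use the distance bounds stated before the observation. The vertices $z'_k$ and $z''_k$ have distance $3$ to their center. Fan vertices have distance $1$ to their center, and every vertex of a copy has distance at most $5$ to the center, with distances within a copy realized through the center. By the triangle inequality, $d(x,v_i) \le d(x,a) + d(a,v_i)$, and $d(a,v_i) \le 3$ whenever $a$ is a $z$-vertex or a fan vertex. So the route through the centers has length at most $d(x,a) + 3 + 1 + 1 + d(b,y)$. This is strictly smaller than $d(x,a) + 6 + d(b,y)$.

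The same inequality also handles routes that use several yellow paths, or that pass through the third copy $C_\ell$. Each yellow path costs $6$, while the detour through the centers costs at most $5$ extra per crossing. Hence every route avoiding $v_i$ or $v_j$ is strictly longer. This shows that $v_i$ and $v_j$ lie on every shortest path, and the claim follows.

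\textbf{Main obstacle.} The hard part is pinning down the exact endpoints of the yellow paths and checking that the slack is strict, i.e.\ that $6 > d(a,v_i) + 1 + d(v_j,b)$. Here $a$ is a $z$-vertex at distance $3$ from $v_i$, and $b$ is a left-fan vertex at distance $1$ from $v_j$, which gives $3 + 1 + 1 = 5 < 6$. If instead both endpoints were $z$-vertices, the sum would be $3 + 1 + 3 = 7$ and the argument would fail. In that case I would need the precise geometry from the figure, namely which yellow paths attach where.
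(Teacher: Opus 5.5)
Your proposal is correct and matches the paper's argument: each new length-$6$ path can be replaced by the detour $z \to v_i \to v_{i+1} \to \ell$ of length at most $3+1+1=5$. Hence any route using a new path is strictly longer than some walk in $G_2$, where $v_i$ and $v_j$ are cut vertices separating the copies. The case you flag as the main obstacle never arises: by construction, every new path joins some $z'_k$ or $z''_k$ of $C_i$ to a left-fan vertex $\ell_{12k}$, $\ell_{12k+3}$, $\ell_{12k+6}$ or $\ell_{12k+9}$ of $C_{i+1}$, so one endpoint is always at distance $3$ from its center and the other at distance $1$.
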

In particular, since $ v_0, v_1, v_2 $ induce a triangle and thus cannot be in a single geodesic, at least two copies do not share geodesics.
We use these two copies to find the desired cuboctahedron.

\begin{restatable}{lemma}{cuboctahedron}\label{lem:cuboctahedron}
    For every partition $ \mathcal{P} $ of $ G_3 $, the quotient $ G_3 / \mathcal{P} $ contains the cuboctahedron as a minor.
    In particular, the geodesic treewidth of $ G_3 $ is at least $ 5 $.
\end{restatable}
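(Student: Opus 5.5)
Fix an arbitrary partition $\mathcal{P}$ of $G_3$ into geodesics. The plan is to exhibit explicit, connected branch sets in $G_3/\mathcal{P}$ whose contraction yields the cuboctahedron, following the overview above. The second sentence then follows at once: the cuboctahedron has treewidth $5$, and treewidth is minor-monotone, so $\tw(G_3/\mathcal{P}) \geq 5$ for every $\mathcal{P}$.

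\textbf{Step 1: reduce to two copies.} Since $v_0,v_1,v_2$ induce a triangle, no single geodesic contains all three, and at least two of them lie in distinct geodesics. By \cref{cl:lb_copies}, a geodesic meeting two copies $C_i$ and $C_j$ must contain both $v_i$ and $v_j$. We would therefore pick two copies, say $C_0$ and $C_1$, with $v_0$ and $v_1$ in distinct geodesics, so that no geodesic meets both. If the triangle splits as $\{v_0,v_1\},\{v_2\}$, we instead choose the pair containing $v_2$; such a pair exists in every case.

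\textbf{Step 2: build the two cycles.} In each chosen copy, discard the geodesic through the center. Since a geodesic visits a fan only in a contiguous stretch of at most three path vertices plus possibly the center, at most a bounded number of the vertices $r_{3k}$ and $\ell_{3k}$ are lost. By \cref{cl:lb_fans}, the remaining vertices $r_{3k}$ (respectively $\ell_{3k}$) lie in pairwise distinct geodesics. Consecutive such geodesics are joined through the intermediate fan vertices, so contracting yields long paths, and closing them through the connections added in $G_3$ yields long cycles.

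For each $k$, \cref{cl:lb_zk} supplies a geodesic through $z'_k$ or $z''_k$ that avoids the fan. This geodesic is adjacent, via its attachment paths, to the geodesics of $r_{6k}$ and $r_{6k+6}$, which produces the stacked triangles on the cycle of one copy. These give the green $c$-cycle together with the red $b$-vertices.

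The other copy provides the blue $a$-cycle. We would then contract along the cycles so that exactly the cuboctahedron's $4$-cycles and triangles survive: four $a$-vertices, four $b$-vertices, four $c$-vertices, or whatever the figure's labelling requires.

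\textbf{Step 3: add the cross edges.} The yellow length-$6$ paths of $G_3$ join the $z$-vertices of one copy to fan vertices of the other. Each such path lies inside a few geodesics, and we would add these geodesics to the branch sets of the cycle vertices they attach to. This realises the $a$–$b$ and $b$–$c$ adjacencies.

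\textbf{Main obstacle.} The hard part is disjointness and connectivity of the branch sets. A single geodesic may pass through a yellow path and then enter a fan, or contain both a $z$-vertex and fan vertices, or use the center. The plan is to first argue from the distance bookkeeping already stated: the $z$-vertices are at distance $3$ from the center, the new paths have length $6$, and shortest paths between copies pass through the centers. From this, a geodesic can touch only a bounded window of consecutive indices.

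The fans would then be chosen large enough, and the cuboctahedron's vertices spaced far apart along the cycles, so that every such "bad" geodesic falls inside one branch set or into unused slack. Each branch set is a contiguous arc of fan geodesics together with its attached yellow-path geodesics, which makes connectivity immediate. I expect most of the case analysis to go into this step.
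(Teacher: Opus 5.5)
\textbf{Verdict: there is a genuine gap.} Your Step~1 is fine and matches the paper. The problem is Step~2, and it carries into Step~3.

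\textbf{Why your branch sets cannot work.} You discard the geodesics through the centers and close the fan paths into cycles ``through the connections added in $G_3$''. So every branch set you build is an arc of fan geodesics plus attached yellow-path geodesics. Such branch sets can never produce the cuboctahedron.
\begin{itemize}
\item Deleting $v_0,v_1,v_2$ splits $G_3$ into three strips. Each strip consists of the right fan of $C_i$ with its $z_k$-attachments, the yellow paths, and the left fan of $C_{i+1}$.
\item Each strip has treewidth at most $3$. After suppressing degree-$2$ vertices, the $k$-th attachment involves only $r_{6k},r_{6k+3},r_{6k+6},z_k,z'_k,z''_k,\ell_{12k},\ell_{12k+3},\ell_{12k+6},\ell_{12k+9}$. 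Consecutive attachments are separated by $\{r_{6k},\ell_{12k}\}$, and a left-to-right sweep gives a path decomposition with bags of size $4$.
\item A geodesic avoiding the centers is a connected subgraph of one strip. Hence the part of $G_3/\mathcal{P}$ spanned by such geodesics is a minor of a treewidth-$3$ graph, and it has no treewidth-$5$ minor.
\end{itemize}
Closing the $c$-path through yellow paths would also route it through the other copy, which is exactly where you want the disjoint $a$-cycle.

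\textbf{The missing idea.} The center geodesics are themselves vertices of the cuboctahedron. Let $v_0,v_1$ lie in distinct geodesics $P_0,P_1$.
\begin{itemize}
\item $P_0$ is adjacent in the quotient to every right-fan geodesic. So $P_0$, together with consecutive arcs of the pairwise distinct $r_{3k}$-geodesics, forms the $c$-cycle with $P_0=c_1$.
\item Likewise, $P_1$ closes the $a$-cycle.
\end{itemize}

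\textbf{Choosing the window.} Using $P_0$ and $P_1$ is also what determines the window, which your ``spacing far apart'' does not pin down. You need four consecutive $z_k$-attachments such that none of their fan vertices lies in $P_0$ and none of them is joined by a yellow path to a vertex of $P_1$.
\begin{itemize}
\item $P_0$ contains at most two further right-fan vertices. These lie in at most four attachments, forming at most two runs.
\item $P_1$ contains at most two further left-fan vertices, each joined to at most one attachment.
\item So at most six attachments, in at most four runs, are bad. With $22$ attachments, pigeonhole gives a good run of four.
\end{itemize}
Inside this window, the $b$-vertices come from the $z'_k$- or $z''_k$-geodesics that avoid the fan, and the yellow paths give the $a$--$b$ edges.

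\textbf{A smaller point.} A geodesic through a center need not meet the fan in a contiguous stretch; for example $r_a,v_0,r_b$ with $|a-b|$ large is possible. The correct bound is at most two fan-path vertices besides the center.
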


\begin{proof}[Proof sketch]
    \Cref{cl:lb_copies,cl:lb_fans,cl:lb_zk} give two copies such that no geodesic contains vertices of both copies as outlined during the construction.
    \Cref{s:fig:RightFanV0AndLeftFanV1,s:fig:RightFanV0AndLeftFanV1Minor} show how to find the cuboctahedron within these two copies.
    We describe this in more detail in \cref{app:GS:GSLowerBoundPlanar}.
    \begin{figure}
        \centering
        \includegraphics{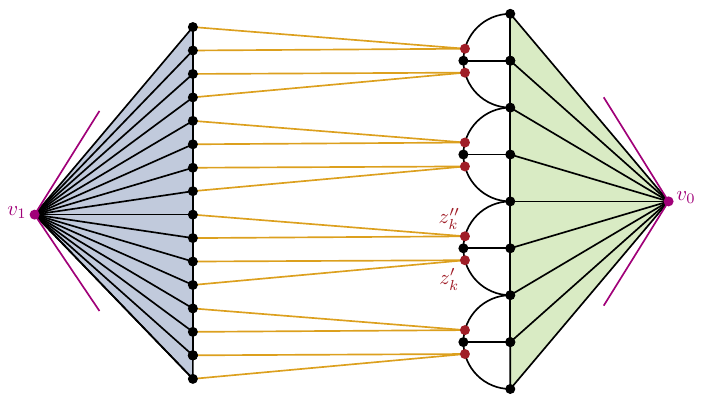}
        \caption{%
        A left sub-fan (blue) and a right sub-fan (green). 
        Subdivision vertices are not drawn. The geodesics containing $v_0$ and $v_1$ (purple) do not intersect the sub-fans besides in $v_0$ and $v_1$.
        }
        \label{s:fig:RightFanV0AndLeftFanV1}
    \end{figure}
    \begin{figure}
        \centering
        \includegraphics{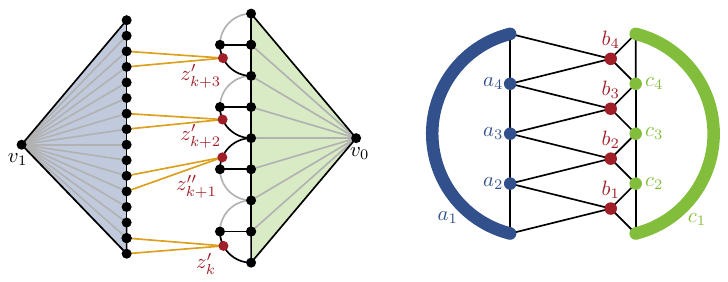}
            \caption{%
            Left: 
            Parts of the left fan of $ v_1 $ (blue) and the right fan of $v_0$ (green).
            Some vertices are omitted for readability.
            All drawn vertices of the fans and $z'_{k}, z''_{k+1}, z'_{k+2}$ and $z'_{k+3}$ belong to pairwise distinct geodesics. Gray edges are not used.
            Right: Alternative drawing of the cuboctahedron. Note that $c_1$ and $a_1$ are vertices that are drawn as half-circles to match the embedding of $ G_3 $ on the left.
            }
        \label{s:fig:RightFanV0AndLeftFanV1Minor}
    \end{figure}
\end{proof}

\section{Open Questions}

While we show that product structure does not imply geodesic structure, it is an intriguing open question whether geodesic structure implies product structure. 
To this end, we show a positive result for geodesic treewidth 1.
That is, a possible counterexample needs to be a graph class that is not minor-closed and has geodesic treewidth at least 2.
We remark that both a positive and a negative answer may have interesting consequences.
If the two notions are incomparable, i.\,e., neither implies the other, there is a chance to handle graph classes that do not admit product structure.
On the other hand, if geodesic structure is strictly stronger than product structure, there is hope to strengthen results obtained by the latter.

In light of the different behavior of geodesic treewidth and row treewidth in their computational complexity, we ask whether determining the geodesic treewidth is \FPT\ in the treewidth of the input graph.
Note that by the \NP-hardness result of Biedl, Eppstein, and Ueckerdt~\cite{biedl2023complexityembeddinggraphproducts}, row treewidth cannot be expected to admit such an \FPT-algorithm.
In addition, we remark that the concept of geodesic structure was developed to compute $p$-centered colorings for graphs of proper minor-closed graph classes and based on this to solve the subgraph-isomorphism problem on this graph class~\cite{PILIPCZUK2021111}.
We ask for further problems that can be solved more efficiently on graphs with bounded geodesic treewidth than 
in general.

Finally, we ask for the maximum geodesic treewidth and row treewidth of planar graphs, which is between 5 and 6, respectively 3 and 6, by \cref{thm:LbPlanarGraph} and~\cite{PlanarGraphsQueueNumber,ImprovedPlanarGraphProductStructureTheorem}.

\FloatBarrier

\bibliography{necessary_references}

\clearpage
\appendix

\section*{\LARGE\sffamily\bfseries\mathversion{bold}Appendix}
\vskip 1em
\crefalias{section}{appendix}

\section{Basic Definitions}
\label{app:preliminaries}
\label{sec:Introduction:Preliminaries}

Here we give some additional definitions used throughout the paper. 

\subparagraph{Basics.} 
All graphs that we consider are undirected graphs with neither loops and nor multi-edges, in particular, whenever we modify a graph we assume that loops and multi-edges are removed. 
The vertex set of a graph $ G $ is denoted by $ V(G) $ and its edge set by $ E(G) $.
For a set $ S \subseteq V(G) $, the graph that is induced by $ S $ is denoted by $ G[S] $.
\emph{Contracting} a set $ S $ of vertices is the operation of identifying all vertices of $ S $ into a single vertex which is adjacent to all neighbors of vertices in $S$. 
Note that $ S $ does not necessarily induce a connected subgraph.
We say a graph $M$ is a \emph{minor} of a graph $G$ if it can be obtained from $G$ by contracting connected subgraphs, called \emph{branch sets}, deleting vertices and deleting edges.
A graph class is called \emph{proper minor-closed} if it is closed under taking minors and is not the class of all graphs.
A \emph{layering} $\mathcal{L}=(L_1, L_2, \dots)$ of a graph $G$ is an ordered partition of its vertex set into \emph{layers} $L_1, L_2, \dots$ such that for every edge $uv \in E(G)$ with $u \in L_i, v \in L_j$ it holds that $|i-j| \leq 1$.
A \emph{partition} of a graph $ G $ is a set of induced subgraphs, also called \emph{parts}, such that their vertex sets partition the vertex set of $ G $.
The \emph{quotient} $G/\mathcal{P}$ of a graph $G$ and a partition $\mathcal{P}$ is the graph obtained from $G$ by contacting each part of $\mathcal{P}$.
The \emph{strong~product} $G_1 \boxtimes G_2$ of two graphs $G_1, G_2$ is the graph $G$ with $V(G) = V(G_1) \times V(G_2)$ and $(u_1, u_2), (v_1, v_2) \in V(G)$ are adjacent if and only if either 
$u_1 = v_1 $ and $ u_2 v_2 \in E(G_2)$, 
or $ u_1 v_1 \in E(G_1)$ and $u_2 = v_2 $,
or $ u_1 v_1 \in E(G_1)$ and $ u_2 v_2 \in E(G_2)$.
The \emph{length} of a path is the number of edges in the path.
A \emph{geodesic} is a path between two vertices $ u, v $ that is a shortest $u$-$v$ path.  
For a path $(p_1,\dots, p_k)$ in a graph $G$, a \emph{shortcut} is a path between some $p_i, p_j$ with length less than $|j-i|$. 
Observe that geodesics do not have shortcuts.
For a graph parameter $ p $ and a graph class \calG, we define $ p(\calG) = \max p(G) $, where the maximum is taken over all $ G \in \calG $.
In particular, a lower bound on $ p $ of a graph class asks only for a single certificate $ G \in \calG $.
The power set of a set $ S $ is denoted by $ 2^S $, and for a natural number $ n $ we define $ [n] = \{1, \dots, n\} $.

\subparagraph{Treewidth.} 
A \emph{tree decomposition} $(T, \mathcal{X}=\{X_1, \dots, X_k\})$ of a graph $G$ is a tree $T$ with vertices $ u_1, \dots, u_k $, where the \emph{bags} $ X_i $, $ i \in [k] $, are subsets of $ V(G)$ satisfying the following properties.
\begin{enumerate}
    \item Each vertex of $G$ is in at least one bag.
    \item For each edge $uv \in E(G)$, there is at least one bag that contains both $u$ and $v$.
    \item For each vertex $v$ of $G$, the subgraph of $T$ induced by all vertices $ u_i $ whose bags $ X_i $ contain $v$ is connected.
\end{enumerate}
The \emph{width} of a tree decomposition $(T, \mathcal{X})$ is the size of the largest bag in $\mathcal{X}$ minus 1. The \emph{treewidth} $ \tw(G) $ of a graph $G$ is the minimum width of a tree decomposition of $G$.
For a subtree $ T' \subseteq T $, let $ G[T'] $ denote the subgraph of $ G $ that is induced by all vertices in some bag of $ T' $.
We remark that the treewidth of every minor of a graph $ G $ is at most $ \tw(G) $.

A \emph{nice} tree decomposition is a special tree decomposition that only contains the following four types of bags. \emph{Leaf bags} contain only a single vertex of $G$. \emph{Introduce bags} have only a single child $c$ in $T$ and contain the vertices of $c$ and one additional vertex. \emph{Forget bags} also have only a single child $c$ in $T$ and contain the vertices of $c$ and minus one of the vertices. Lastly, \emph{join bags} have two children in $T$ and contain the same vertices as each of the two children.
For every $ n $-vertex graph of treewidth $ k $, a nice tree decomposition of width $ k $ and linear size exists and can be computed in $ O(n \cdot f(k)) $ time for some function $ f $~\cite{nice_treedecomposition}.

A graph class \calG has \emph{linear local treewidth} if there is a function $ f \in O(n) $ such that for every graph $ G \in \calG $, the $ k $-th neighborhood of every vertex has treewidth at most $ f(k) $.
We remark that linear local treewidth is implied by bounded row treewidth~\cite{PlanarGraphsQueueNumber,DUJMOVIC2017111} and by bounded geodesic treewidth, for which we give a proof for the sake of completeness.
For this, consider a graph $ G $ with geodesic treewidth $ t $ and let \calP be a corresponding partition of $ G $ into geodesics with $ \tw(G/\calP) = c $.
We show that the treewidth of the $ k $-th neighborhood of any vertex $ v $ is at most $ (2k + 1) (c + 1) $.
For this, take a tree decomposition of $ G / \calP $ of width $ c $ and for each bag, replace each vertex $ \gamma $ corresponding to a geodesic $ P_\gamma $ in $ G $ by the subpath of $ P_\gamma $ whose vertices have distance at most $ k $ to the vertex $ v $.
Observe that each geodesic contains at most $ 2k  + 1 $ such vertices.

\section{XP-Algorithm for Computing the Geodesic Treewidth}
\label{app:correctness}

In this section, we finish the algorithm outlined in \cref{sec:XPAlgorithm} by computing the set of configurations for each bag, which suffices to decide whether the geodesic treewidth is at most $ d $ by \cref{lem:RootCorrect}.
After this we conclude the section with a more detailed analysis of the runtime and a remark discussing what causes the algorithm to not be \FPT in the treewidth of the input graph.

To start with, we compute the distances between each pair of vertices of $G$ with $n$ breadth first searches.
We need these distances to later check whether certain paths are geodesics in $G$.
Since treewidth $k$ graphs have at most $k\cdot n$ edges this takes time $O((n+1)^2\cdot k^2)$.
Then, we check, whether $G$ has treewidth larger than $d$.
If not we are done.
Otherwise, we compute the set $\obs$ of forbidden minors for treewidth $d$ graphs and the set $\obssub$ of subgraphs of graphs in $\obs$.
Since $d$ is at most $k$ this takes time $O(g(k))$ for some function $g$.
In total our pre-computations take time $O(n^2 \cdot f(k))$ for some function $f$.
In the following we describe how we compute the configurations for each type of bag. 
After this we argue why the computations are sufficiently fast to obtain our desired runtime of $O(n^{2k+3}\cdot f(k))$ for some function $f$.

Next we give a more detailed description of what our algorithm outlined in \cref{sec:XPAlgorithm} does to compute the configuration for each bag type.
For this we introduce some additional notation.
For a vertex $v \in V(G)$ a partition $\mathcal{P}$ of $G$ and a minor $H$ of $G/\mathcal{P}$, we refer to the vertex $h \in V(H)$ whose branch set contains the part $P\in\mathcal{P}$ with $v \in P$ with $\minorvertex{H}{v}$.
During the algorithm we add edges to graphs.
If this would result in multi-edges or self-loops we do not add the edges but do not mention this explicitly in the following.
The descriptions of introduce, forget, and join bags are each followed by a proof of correctness.
For leaf bags the correctness is obvious.

\subparagraph{Leaf bag.}
Let $B=\{v\}$ be a leaf bag.
Clearly, there is only one possible partition $\mathcal{P}_B$ of $B$.
Namely, $\mathcal{P}_B=\{\{v\}\}$.
Recall that $\obssub$ is a set of labeled graphs.
Thus, there are multiple $B$-improper $\obssub$-minors of $G[T_B]/\mathcal{P}$ that are all isomorphic to $K_1$. 
For each of these $H\in\obssub$ we store $(H,\components)$ as a completable $B$-improper minor with $\components(h)=\{\{\{v\}\}\}$ for the unique vertex $h\in V(H)$.

\subparagraph{Introduce bag.}
Let $B$ be an introduce bag with a child $B_1 = B - \{v\}$.
We iterate over all configurations of $B_1$ to compute the configurations of $B$.
To do this, consider a configuration of $B_1$ consisting of a partition $\mathcal{P}_{1}$ with a set of labels $L_1$ and a set $M_1$ of $B_1$-improper $\obssub$-minors.
We start by computing possible partitions of $B$.
To do this, we consider three ways to add $v$ to $\mathcal{P}_1$.

\begin{description}
    \item[Adding $v$ to an existing part.] 
    We consider every way to add $v$ to a part of $\mathcal{P}_{1}$.
    For each of the resulting partitions we check whether the part $P_v$ that contains $v$ still corresponds to a geodesic in $G$.
    To do this, we first check whether $v$ is adjacent to exactly one vertex $b \in P_v$, this vertex $b$ is a border-vertex, and $b$ is not associated with an end-vertex outside of $B$ in $L_1$.
    If this is the case, let $b'$ be the other border-vertex of $B$ and $x$ be the end-vertex it is associated with in $L_1$.
    If $b'$ is not associated with such an end-vertex we set $x=b'$.
    Then, we look up the distance $d_{xb}$ between $x$ and $b$ and distance $d_{xv}$ between $x$ and $v$.
    If $d_{xb} + 1 = d_{xv}$, we know that the adding $v$ to its part resulted in a geodesic and we update the border-vertices of the partition.
    Otherwise, we discard this partition.
    
    It is left to construct the set of completable $B$-improper $\obssub$-minors.
    To do this, for each configuration of $B_1$ consisting of a partition $\mathcal{P}_{1}$ with a set of labels $L_1$ and a set $M_1$ of $B_1$-improper $\obssub$-minors and each partition $\mathcal{P}_B$ of $B$ we just computed, we iterate over all $B_1$-improper minors $(H_1,\components'_1)\in M_1$. 

    We obtain $H$ by adding edges $\minorvertex{H}{v}\minorvertex{H}{w}$ to $H_1$ for each vertex $w\in N_{G[B]}(v)$.
    To obtain $\components'$, we substitute $P_v - \{v\}$ by $P_v$ in $\components_1'(\minorvertex{H_1}{v})$ and then merge two components $C,C' \in\components'_1(\minorvertex{H_1}{v})$ if $P_v - \{v\}$ is in $C$ and a neighbor of $v$ is in a part in $C'$.

    Finally, we store $(H',\components')$ for each $H' \subseteq H$ with $H'\in\obssub$.
    
    \item[Combining two existing parts.]
    We consider every way to combine two existing parts in $\mathcal{P}_1$ by adding $v$.
    That is for each pair of parts $P_1,P_2\in\mathcal{P}_1$ we test whether we can add $v$ to both $P_1$ and $P_2$ similar to the first case.
    Then, we additionally check whether the two parts together correspond to a geodesic as well and update the labels.
    This can be done analogous to the first case.

    It is left to construct the set of completable $B$-improper $\obssub$-minors.
    To do this, we check for each $(H_1,\components'_1)\in M_1$ whether $P_1$ and $P_2$ are in the improper branch set of the same vertex $v \in V(H_1)$.
    If not we can clearly discard $(H_1,\components'_1)$.
    If they are in the same branch set we update $\components'_1(v)$ by merging the components of $P_1$ and $P_2$ and replacing $P_1$ and $P_2$ by the merged part.
    Additionally, we add edges to $H_1$ to obtain $H$ and update $\components'_1$ by possibly merging some components similar to the first case.
    Finally, we store $(H',\components')$ for each $H' \subseteq H$ with $H'\in\obssub$.

    \item[Adding a new part.] 
    We add a new part $P_v= \{v\}$ to $\mathcal{P}_B$.
    To construct the set of completable $B$-improper $\obssub$-minors, for each $(H_1,\components'_1)\in M_1$, we do two things.
    
    First, we consider every way to add $P_v$ to the branch set of a vertex $h \in V(H_1)$ with $\components'_1(h)\neq \{\{\emptyset\}\}$.
    That is, we update $\components'_1(h)$ by adding $\{P_v\}$ as a component and then possibly merge some components analogous to the cases before.
    
    Second, we consider all graphs $H \in \obssub$ such that $H_1\subset H$ and $|V(H_1)|=|V(H)|-1$ and try whether adding $P_v$ to $H_1$ results in a supergraph of $H$.
    More precisely, for the unique vertex $h \in V(H)-V(H_1)$ we set $\components'(h) = \{\{P_v\}\}$ and we add edges to $H_1$ similar to the cases before.
    If the resulting graph is a supergraph of $H$, we store $(H,\components')$ as a completable $B$-improper minor.
\end{description}

\begin{restatable}{lemma}{IntroduceBagCorrect}\label{lem:IntroduceBagCorrect}
    Given an introduce bag $B$ with a child $B_1 = B - \{v\}$ and the set of all configurations of $B_1$, our algorithm computes the set of all configurations of $B$.
\end{restatable}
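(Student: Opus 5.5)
We show two inclusions: every configuration the algorithm outputs for $B$ is a genuine configuration of $B$ (\emph{soundness}), and every genuine configuration of $B$ is produced (\emph{completeness}). Both directions go through the correspondence between a partition $\mathcal{P}_T$ of $G[T_B]$ into geodesics and its restriction $\mathcal{P}_T - v$ to $G[T_{B_1}] = G[T_B] - v$. This uses that $v$ is introduced at $B$. By the third tree-decomposition property, $v$ does not occur below $B_1$. So every neighbor of $v$ in $G[T_B]$ lies in $B_1$, and $N_{G[T_B]}(v) = N_{G[B]}(v)$.

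\textbf{Completeness.} Fix a partition $\mathcal{P}_T$ of $G[T_B]$ into geodesics and let $P$ be the geodesic containing $v$. Removing $v$ from $P$ leaves either nothing, one subpath, or two subpaths, depending on whether $v$ is a singleton, an end-vertex, or an inner vertex of $P$. Each subpath is a subpath of a geodesic and hence a geodesic. This gives a partition $\mathcal{P}'_T$ of $G[T_{B_1}]$ into geodesics, which by hypothesis induces a configuration $(\mathcal{P}_1, L_1, M_1)$ of $B_1$. These three cases are exactly the cases \enquote{new part}, \enquote{add to existing part} and \enquote{combine two parts}.

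In the two non-trivial cases, the neighbor $b$ of $v$ on $P$ lies in $B_1$. Since $P$ is induced (geodesics are induced), $b$ is the unique neighbor of $v$ in its part. Moreover, $b$ is the last vertex of the subpath, so it is a border-vertex of its part in $\mathcal{P}_1$ and is not associated with an outside end-vertex. The other end $x$ of $P$ is recorded in $L_1$, either as the associated outside end-vertex or as the other border-vertex. Then $d_{xv} = d_{xb} + 1$ holds because $P$ is a geodesic. So the algorithm's test passes, and the updated labels coincide with those of $\mathcal{P}_T$.

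For the improper minors, take an improper $\obssub$-minor $(H,\components)$ of $G[T_B]/\mathcal{P}_T$. Restricting its improper branch sets to $\mathcal{P}'_T$, and deleting the vertex whose branch set was exactly $\{P_v\}$ if such a vertex exists, gives an improper minor $(H_1,\components_1)$ of $G[T_{B_1}]/\mathcal{P}'_T$. Its graph $H_1$ is a subgraph of $H$ and hence lies in $\obssub$. We must check that its $B_1$-projection is completable. The key point is that a component not meeting $B_1$ cannot merge with anything when $v$ is added, because all of $v$'s neighbors lie in $B_1$. So an incompletable projection would force an incompletable projection of $(H,\components)$ at $B$, which the definition excludes. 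Replaying the algorithm's update on this projection (adding edges $\minorvertex{H}{v}\minorvertex{H}{w}$, replacing parts, merging components that become connected through $v$) then yields exactly the $B$-projection of $(H,\components)$, up to passing to a subgraph in $\obssub$.

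\textbf{Soundness.} Each step above reverses. An accepted configuration comes from some $\mathcal{P}'_T$ realizing the $B_1$-configuration. Extending it by $v$ keeps every part a geodesic: the distance test certifies that the end-to-end length equals the distance. For merging two parts, we check the concatenation against the stored end-vertices of both sides. Each stored minor lifts to an improper minor of $G[T_B]/\mathcal{P}_T$ because the added edges and merges are all realized by edges at $v$.

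\textbf{Main obstacle.} The delicate part is showing that the $B$-improper minors are determined by the $B_1$-data alone. Concretely, the connectivity merges computed from $N_{G[B]}(v)$ and from the parts in $B$ must equal the true component structure in $G[T_B]/\mathcal{P}_T$. The key fact is again that $v$ has no neighbors outside $B_1$. A second subtlety is that the geodesic test needs only the two stored endpoints: the concatenation of two geodesics through $v$ is a geodesic if and only if its length equals the distance between its endpoints.
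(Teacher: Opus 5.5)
Your proposal is correct and follows essentially the same route as the paper. It relates partitions of $G[T_B]$ and $G[T_{B_1}]$ by removing $v$ from its geodesic, matches the three resulting cases to the algorithm's three operations, and tracks improper minors through this restriction using the same fact: an incompletable $B_1$-projection can only lead to an incompletable $B$-projection. The differences are presentational: you argue via soundness plus completeness instead of case by case, and you make explicit that $N_{G[T_B]}(v)\subseteq B_1$, which the paper uses only implicitly and which justifies both the completability argument and the use of the stored end-vertices in the distance test.
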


\begin{proof}
    First, we consider how partitions of $G[T_B]$ and $G[T_{B_1}]$ into geodesics correspond to each other.
    Since $G[T_{B_1}]$ is an induced subgraph of $G[T_B]$, taking any partition $\mathcal{P}$ of $G[T_B]$ into geodesics, restricting it to $G[T_{B_1}]$, and then splitting any disconnected part into its connected components, results in a partition into geodesics of $G[T_{B_1}]$.
    In the other direction this corresponds to either adding $v$ as a new geodesic to the partition of $G[T_{B_1}]$, merging any two geodesics with $v$ in the middle if and only if this results in a new geodesic, or extending any geodesic with $v$ if and only if this results in a geodesic.
    Since $B_1 = B - \{v\}$ the partitions of configurations of $B$ and $B_1$ are the same except that once again $v$ can be included as its own part, $v$ can merge two parts of the partition of $B_1$ or $v$ can be added to an existing part of the partition of $B_1$. 
    In the first case our algorithm correctly computes $\mathcal{P}_B$ due to the third case in the description of the bag. In the second case our algorithm correctly computes $\mathcal{P}_B$ as it considers all possible ways to combine two existing parts by adding $v$. In the last case our algorithm correctly computes $\mathcal{P}_B$ as it considers all possible ways to add $v$ to an existing part. In the last two cases we also ensure that the underlying partition of $G[T_B]$ remains a partition into geodesics.
    
    It is left to show that our algorithm correctly computes the set $M$ of completable $B$-improper minors. We consider the three cases from the first part of the proof. 
    
    First, let $\mathcal{P}_T$ be a partition of $G[T_B]$ into geodesics such that there is a corresponding partition $\mathcal{P}_{T_1}$ of $G[T_{B_1}]$ into geodesics that differs only in $\mathcal{P}_T$ having an extra part $P_v$ containing only $v$.
    In this case the improper minors $(H, \components)$ of $G[T_B]/\mathcal{P}_T$ and improper minors $(H_1, \components_1)$ of $G[T_{B_1}]/\mathcal{P}_{T_1}$ differ only in adding $\{P_v\}$ to $\components_1(x)$ for some $x \in V(H_1)$ (or for some $x$ with $V(H)=V(H_1)\cup\{x\}$), adding the additional edges caused by $v$ and its neighbors to the minor and merging some sets in $\components_1(x)$ if $v$ connects them in $G[T_B]$. Hence, the $B$-improper minors and $B_1$-improper minors corresponding to the partition $\mathcal{P}_T$ of $G[T_B]$ and partition $\mathcal{P}_{T_1}$ of $G[T_{B_1}]$ also differ only in these aspects. Our algorithm considers all possible ways to add $\{P_v\}$ to $\components'_1(x)$ of any $B_1$-improper minor $(H_1, \components'_1)$, merges the sets in $\components'_1(x)$ such that they correspond to the connected components of $G[T_B]/\mathcal{P}_T$ and adds any resulting edges to the minor. Note that if $(H_1, \components'_1)$ is incompletable then the corresponding $B$-improper minor (where $\{P_v\}$ was added to $\components'_1(x)$) is also incompletable. If $(H,\components'_1)$ is completable then the corresponding $B$-improper minor is incompletable if and only if $\components'_1(x)$ contains $\{\emptyset\}$.
    Since our algorithm only adds $\{P_v\}$ to $\components'_1(x)$ when this is not the case, our algorithm correctly computes the set of completable $B$-improper minors.

    Second, let $\mathcal{P}_T$ be a partition of $G[T_B]$ into geodesics such that there is a corresponding partition $\mathcal{P}_{T_1}$ of $G[T_{B_1}]$ into geodesics with $\mathcal{P}_T$ being the result of merging two geodesics $P_1$ and $P_2$ of $\mathcal{P}_{T_1}$ with $v$ as the center.
    In this case the improper minors of $G[T_B]/\mathcal{P}_T$ and improper minors of $G[T_{B_1}]/\mathcal{P}_{T_1}$ differ only in replacing $P_1$ and $P_2$ with the new geodesic, adding the additional edges caused by $v$ and its neighbors to the minor, and merging some sets in $\components_1(x)$ (for $x$ with $P_1$ and $P_2$ in some subset of $\components_1(x)$) if $v$ connects them in $G[T_B]$. Hence, the $B$-improper minors and $B_1$-improper minors corresponding to the partition $\mathcal{P}_T$ of $G[T_B]$ and partition $\mathcal{P}_{T_1}$ of $G[T_{B_1}]$ also differ only in these aspects. Our algorithm replaces the parts corresponding to $P_1$ and $P_2$ by the new merged part, merges the sets in $\components'_1(x)$ such that they correspond to the connected components of $G[T_B]/\mathcal{P}_T$ and adds any resulting edges to the minor. Note that if an $B_1$-improper minor is incompletable then the corresponding $B$-improper minor (where $P_1$ and $P_2$ were merged) is also incompletable since $P_1 \neq \emptyset$ and $P_2 \neq \emptyset$. If an $B_1$-improper minor is completable then the corresponding $B$-improper minor is also completable. Therefore, our algorithm correctly computes the set of completable $B$-improper minors.

    Lastly, let $\mathcal{P}_T$ be a partition of $G[T_B]$ into geodesics and such that there is a corresponding partition $\mathcal{P}_{T_1}$ of $G[T_{B_1}]$ into geodesics that differs only in $v$ being added to a geodesic $P$ of $\mathcal{P}_{T_1}$. In this case the improper minors of $G[T_B]/\mathcal{P}_T$ and improper minors of $G[T_{B_1}]/\mathcal{P}_{T_1}$ differ only in replacing $P$ with the new extended geodesic, adding the additional edges caused by $v$ and its neighbors to the minor and merging some sets in $\components_1(x)$ (for $x$ with $P$ in some subset of $\components_1(x)$) if $v$ connects them in $G[T_B]$. Hence, the $B$-improper minors and $B_1$-improper minors corresponding to the partition $\mathcal{P}_T$ of $G[T_B]$ and partition $\mathcal{P}_{T_1}$ of $G[T_{B_1}]$ also differ only in these aspects. Our algorithm replaces the part corresponding to $P$ by the new part that includes $v$, merges the sets in $\components'_1(x)$ such that they correspond to the connected components of $G[T_B]/\mathcal{P}_T$ and adds any resulting edges to the minor. Note that an $B_1$-improper minor is completable if and only if the corresponding $B$-improper minor is also completable. Therefore, our algorithm correctly computes the set of completable $B$-improper minors.

    Thus, in all cases our algorithm computes exactly the configurations of $B$.
\end{proof}

\subparagraph{Forget bag.}
Let $B$ be a forget bag with a child $B_1 = B + \{v\}$.
Once again, we iterate over all configurations of $B_1$ to compute the configurations of $B$.
To do this, consider a configuration of $B_1$ consisting of a partition $\mathcal{P}_{1}$ with a set of labels $L_1$ and a set $M_1$ of $B_1$-improper $\obssub$-minors.

Let $P_v \in \mathcal{P}_{1}$ be the part containing $v$.
We obtain $\mathcal{P}_{B}$ by removing $v$ from $P_v$ and deleting $P_v-\{v\}$ if it is empty now.
Then, if $v$ was a border-vertex, we set the vertex next to it as the new border-vertex (recall that each part is ordered).
Additionally, if $v$ was associated with an end-vertex outside of $B$ in $L_1$ we associate the new border-vertex with this end-vertex instead.
Otherwise, we associate the new border-vertex with $v$.
Finally, we update each $B_1$-improper minor $(H_1,\components_1') \in M_1$ by setting $\components'(v)=\{\{P \cap B \colon P \in C\} \colon C \in\components_1(v)\}$ and deleting the tuple if there is a vertex $v \in V(H)$ such that $\{\emptyset\} \in \components'(v)$ and $|\components'(v)|>1$. Note that this is by definition an incompletable minor.

\begin{restatable}{lemma}{ForgetBagCorrect} \label{lem:ForgetBagCorrect}
    Given a forget bag $B$ with a child $B_1 = B + \{v\}$ and the set of all configurations of $B_1$, our algorithm computes the set of all configurations of $B$.
\end{restatable}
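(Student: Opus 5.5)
The proof follows the pattern of \cref{lem:IntroduceBagCorrect}. First I would observe that $G[T_B] = G[T_{B_1}]$: forgetting $v$ changes only the bag, not the subtree's vertex set. So the partitions $\mathcal{P}_T$ of $G[T_B]$ into geodesics are exactly the partitions of $G[T_{B_1}]$ into geodesics, and so are the improper $\obssub$-minors $(H,\components)$ of $G[T_B]/\mathcal{P}_T$. It therefore suffices to show two things. First, for each fixed $\mathcal{P}_T$, the algorithm maps the configuration of $B_1$ induced by $\mathcal{P}_T$ to exactly the configuration of $B$ induced by $\mathcal{P}_T$. Second, every configuration the algorithm outputs arises this way. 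The second point is immediate once the first holds, because every configuration of $B_1$ comes from some $\mathcal{P}_T$ by definition.

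\textbf{Partition and labels.} Fix $\mathcal{P}_T$ and let $P_T$ be the geodesic containing $v$, with $P_v = V(P_T)\cap B_1$ ordered along $P_T$. Then $V(P_T)\cap B = P_v - \{v\}$ with the induced order, and it is empty exactly when the algorithm deletes the part. All other parts are unchanged. If $v$ is not a border-vertex, the border-vertices and label of the part are unchanged. If $v$ is a border-vertex and the remaining part is nonempty, the new border-vertex is the neighbour of $v$ in the order. For the label there are two cases.
\begin{itemize}
\item If $v$ was associated with an outside end-vertex $x$, then $x$ is still outside $B$ and is closest to the new border-vertex.
\item If $v$ itself was an end-vertex of $P_T$, it is now an outside end-vertex closest to the new border-vertex.
\end{itemize}
The only delicate point is the number of outside end-vertices after forgetting. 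If the label said none were outside and $v$ was an end, one end is now outside. If one was outside (on the other side) and $v$ is the other end, both are now outside, which is the ``both outside'' label. I would add a sentence confirming that the algorithm's update realises these transitions, since the description is terse here.

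\textbf{Minors.} Fix an improper $\obssub$-minor $(H,\components)$ of $G[T_B]/\mathcal{P}_T$. Its $B$-improper minor is $\{\{P\cap B: P\in C\}: C\in\components(h)\}$. Since $P\cap B = (P\cap B_1)\cap B$, this is obtained from the $B_1$-improper minor by intersecting every stored set with $B$, which is exactly what the algorithm computes.

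The main obstacle is completability, since the algorithm only sees completable $B_1$-improper minors. The step that needs care is that incompletable $B_1$-improper minors never become completable $B$-improper minors. Suppose $\{\emptyset\}\in\components'_1(h)$ and $|\components'_1(h)|>1$. After intersecting with $B$, the empty component stays $\{\emptyset\}$ and the multiset size stays the same, so the minor remains incompletable. Hence discarding these at $B_1$ loses nothing. Conversely, the algorithm deletes exactly those images that become incompletable. A component that consisted only of $P_v$ with $P_v\cap B=\emptyset$ can turn into $\{\emptyset\}$ while another component exists, and this matches the definition. Therefore the output set $M$ equals the set of completable $B$-improper $\obssub$-minors of $G[T_B]/\mathcal{P}_T$, which proves the lemma.
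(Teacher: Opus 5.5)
Your proposal is correct and follows essentially the same route as the paper's proof: both use $G[T_B]=G[T_{B_1}]$, obtain the partition and the $B$-improper minors by intersecting with $B$ (via $(P\cap B_1)\cap B=P\cap B$), and argue that incompletable $B_1$-improper minors stay incompletable while the algorithm discards exactly those that newly become incompletable. Your explicit case analysis of the label update goes beyond the paper's proof, which does not discuss labels at all for forget bags; this includes the transition to the ``both outside'' label when $v$ was the remaining end-vertex inside $B_1$.
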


\begin{proof}
    We note that $G[T_B]=G[T_{B_1}]$ meaning the subgraph of $G$ corresponding to $B$ is the same as the subgraph corresponding to $B_1$. Thus, any partition of $G[T_B]$ into geodesics is a partition of $G[T_{B_1}]$ into geodesics and vice versa. This means for configurations of $B$ and $B_1$ their partitions $\mathcal{P}_B$ and $\mathcal{P}_1$ differ only in their intersection with $B$ or $B_1$. This corresponds to only a removal of $v$ from the corresponding set and removing the whole set if it is empty afterwards. 
    Note that our algorithm performs precisely this operation on partitions of configurations of $B_1$.
     
    Next we consider the $B$-improper minors in a configuration of $B$. For a partition $\mathcal{P}_T$ of $G[T_B]$ into geodesics we consider the corresponding configuration of $B_1$ and $B$ as described in the first part of this proof. Note that an improper minor of $G[T_B]/\mathcal{P}_T$ is also an improper minor of $G[T_{B_1}]/\mathcal{P}_T$ since $G[T_B]/\mathcal{P}_T=G[T_{B_1}]/\mathcal{P}_T$. Hence, similar to before, the $B$-improper minors and $B_1$-improper minors corresponding to a partition $\mathcal{P}_T$ of $G[T_B]$ and $G[T_{B_1}]$ differ only in their intersection with $B$ and $B_1$. Since $B \subseteq B_1$ intersecting the $B_1$-improper minors with $B$ (more precisely intersecting the individual sets within each branch set with $B$) corresponds exactly to the $B$-improper minors. 
    Note that if an $B_1$-improper minor is incompletable then the corresponding $B$-improper minor is also incompletable. If an $B_1$-improper minor $(H,\components'_1)$ is completable then the corresponding $B$-improper minor $(H,\components')$ is incompletable if and only if there is a vertex $v \in V(H)$ such that $\{\emptyset\} \in \components'(v)$ and $|\components'(v)|>1$. Therefore, our algorithm correctly computes the set of completable $B$-improper minors.
\end{proof}

\subparagraph{Join bag.}
Let $B$ be a join bag with two children $B_1$ and $B_2$ on the same vertex-set.
We iterate over all combinations of a configuration of $B_1$ and a configuration of $B_2$ to compute the configurations of $B$.
To do this, consider a configuration of $B_1$ consisting of a partition $\mathcal{P}_{1}$ with a set of labels $L_1$ and a set $M_1$ of $B_1$-improper $\obssub$-minors and a configuration of $B_2$ consisting of a partition $\mathcal{P}_{2}$ with a set of labels $L_2$ and a set $M_2$ of $B_2$-improper $\obssub$-minors.

First, if $\mathcal{P}_{1}\neq \mathcal{P}_{2}$ we immediately discard this combination of configurations.
Otherwise, we check whether for each part $P$ each border-vertex of $P$ is assigned in at most one of the partitions $\mathcal{P}_{1}$ or $\mathcal{P}_{2}$ an end-vertex.
If this is the case, we construct a new label set $L$ by taking the union of $L_1$ and $L_2$.
That is, each border-vertex that has an end-vertex assigned in either $L_1$ or $L_2$ is assigned this label.
Now, we check if the resulting part corresponds to a geodesic in $G$.
If not, we discard this combination of configurations.
This can be done similar to before for introduce bags by comparing the length of the resulting path with the precomputed distance.
Then, if both border-vertices of a part are assigned an end-vertex we do not store the assigned end-vertices and instead just store that there are two end-vertices associated with the part.
Doing this we obtain a partition $\mathcal{P}_B$ and a set of labels $L$.

It remains to construct the set $M$ of completable $B$-improper $\obssub$-minors.
To do so, we iterate over each pair of a $B_1$-improper minor $(H_1,\components'_1) \in M_1$ and a $B_2$-improper minor $(H_2,\components'_2) \in M_2$.
Then, we check whether for each vertex $v \in V(H_1) \cap V(H_2)$ we have that $\bigcup_{C_1\in\components'_1(v)}\bigcup_{P\in C_1} P=\bigcup_{C_2\in\components'_2(v)}\bigcup_{P\in C_2} P$ and that $\{\emptyset\}\notin \components'_1(v)\cup\components'_2(v)$.
The first part ensures that each part in $\mathcal{P}_B$ is assigned to precisely one branch set.
The second part is to discard cases in which there is a vertex $v \in V(H_1) \cap V(H_2)$ with $\components'_1(v)=\{\emptyset\}$ or $\components_2'(v)=\{\emptyset\}$, which we need to discard by the definition of completable $B$-improper minors.
Now, we construct $\components'$ for the completable $B$-improper minor $(H=H_1\cup H_2,\components'_1)$.

To do this, for each vertex $v \in V(H_1)\cap V(H_2)$ we construct an auxiliary graph $F$ with vertex set $\components'_1(v) \cup \components'_2(v)$ and an edge between two vertices $S_1,S_2\in V(F)$ if and only if there is a set $P\neq \emptyset$ with $P \in S_1\cap S_2$. For each connected component of $F$ let the union of the vertices of this component be a set in $\components'(v)$. For all other vertices $v\in V(H_1) \cup V(H_2) -(V(H_1) \cap V(H_2))$ we set $\components'(v)=\{\emptyset\}$.

Doing this for all pairs of $B_1$-improper minors and $B_2$-improper minors, we obtain the set $M$ of all completable $B$-improper $\obssub$-minors $(H,\components')$. Finally, we store $(H',\components')$ for each $H' \subseteq H$ with $H'\in\obssub$.

\begin{restatable}{lemma}{JoinBagCorrect} \label{lem:JoinBagCorrect}
    Given a join bag $B$ with two children $B_1$ and $B_2$ on the same set of vertices and the set of all configurations of $B_1$ and $B_2$, our algorithm computes the set of all configurations of $B$.
\end{restatable}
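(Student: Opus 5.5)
We prove the two inclusions separately, following the pattern of \cref{lem:IntroduceBagCorrect,lem:ForgetBagCorrect}. Write $T_1 = T_{B_1}$ and $T_2 = T_{B_2}$. By the properties of tree decompositions we have $V(G[T_B]) = V(G[T_1]) \cup V(G[T_2])$ and $V(G[T_1]) \cap V(G[T_2]) = B$. Moreover, there are no edges between $V(G[T_1]) \setminus B$ and $V(G[T_2]) \setminus B$.

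\textbf{Partitions.} Every configuration of $B$ arises from a partition $\mathcal{P}_T$ of $G[T_B]$ into geodesics. We first check that the partition part is computed correctly in both directions.

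\emph{From $\mathcal{P}_T$ to the children.} Restrict $\mathcal{P}_T$ to $G[T_i]$ and split into connected pieces. Each part becomes a subpath of a geodesic, and subpaths of geodesics are geodesics. Now consider a geodesic $P$ of $\mathcal{P}_T$ that leaves $B$ on both sides. Its portion outside $B$ beyond one end lies entirely in $T_1$ or entirely in $T_2$, because no edges join the two private sides. If $P$ alternated between the two sides in its interior, then $P \cap B$ would not be determined in the same way. I plan to handle this as follows. Pieces of $P$ that lie in $T_1 \setminus B$ and sit between two bag vertices are still recorded, since the child partitions only store the order on $B$. So both children see the same ordered part $P \cap B$, with at most one end-vertex label per border-vertex. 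The algorithm's test that each border-vertex is labelled in at most one child, together with the distance comparison, then recovers $\mathcal{P}_B$ and $L$.

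\emph{From the children to $\mathcal{P}_T$.} Take compatible configurations and glue the underlying partitions of $G[T_1]$ and $G[T_2]$ along equal parts of $B$. The glued part is a walk whose two ends are the stored end-vertices, or the border-vertices if nothing is stored. Its length equals the distance between those ends exactly when the check succeeds. The union of the two underlying paths is a path because the private sides are disjoint.

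\textbf{Improper minors.} For the forward direction, take an improper $\obssub$-minor $(H,\components)$ of $J = G[T_B]/\mathcal{P}_T$ and restrict it to each side. Let $H_i$ be the subgraph induced by the vertices whose improper branch sets meet $G[T_i]$, keeping the edges realised by edges of $G[T_i]$. Every edge of $J$ is realised by an edge of $G$, which lies in $G[T_1]$ or in $G[T_2]$, so $H = H_1 \cup H_2$. Each component in $\components(h)$ splits into components on each side. These overlap exactly through parts meeting $B$, which is the connectivity the auxiliary graph $F$ captures. A vertex lying only in $H_1$ has branch set disjoint from $B$. If it also has $|\components(h)|=1$, its record is $\{\emptyset\}$, matching the algorithm. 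We also need the $H_i$ themselves to be completable. If some vertex had $\{\emptyset\} \in \components'_i(h)$ together with another component, then that $\emptyset$-component would be disjoint from $B$ and hence a component in the whole of $J$. In that case $(H,\components)$ itself was incompletable, so nothing is lost.

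For the reverse direction, every pair the algorithm accepts comes from genuine improper minors on the two sides with consistent branch sets on $B$. Their union is an improper minor of $J$, and $F$ computes its components correctly. The last step closes under subgraphs in $\obssub$, matching the definition of $M$.

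\textbf{Main obstacle.} The delicate step is the geodesic gluing, in particular the claim that the labels suffice to decide whether the union is a geodesic. I would argue this by showing that a geodesic part can have its outside pieces on at most one side per border end, and that interior pieces between bag vertices are forced to be shortest paths already.
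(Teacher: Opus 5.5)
Your outline has the same shape as the paper's proof: restrict and glue the partitions, restrict and unite the improper minors with $F$ recomputing the components, then do the completability bookkeeping. Your argument about $\{\emptyset\}$-components is fine. However, the step you single out as delicate fails in both directions.

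\textbf{Forward direction.} Suppose a geodesic $P\in\mathcal{P}_T$ contains bag vertices $b,b'$ that are consecutive among $P\cap B$, and the subpath of $P$ joining them has its interior in $V(G[T_2])\setminus B$. Since $P$ is an induced path, restricting to $G[T_1]$ and splitting into connected pieces (exactly as you prescribe) puts $b$ and $b'$ into different parts of $\mathcal{P}_1$. They do, however, share a part of $\mathcal{P}_2$. So the claim that ``both children see the same ordered part $P\cap B$'' is false. The join's test $\mathcal{P}_1=\mathcal{P}_2$ then discards the pair of restrictions, and your argument does not show that the configuration of $\mathcal{P}_T$ is produced. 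A minimal instance: let $G$ be the $4$-cycle with vertices $b,x,b',y$ in this order and let the join bag be $B=\{b,b'\}$, with $x$ occurring only below $B_1$ and $y$ only below $B_2$. Take $\mathcal{P}_T=\{(b,y,b'),(x)\}$. Then $\mathcal{P}_1=\{\{b\},\{b'\}\}$ but $\mathcal{P}_2=\{(b,b')\}$, and no admissible pair yields this configuration.

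\textbf{Reverse direction.} The claim that ``the union of the two underlying paths is a path because the private sides are disjoint'' is also false. In the same graph, the child partitions $\{(b,x,b')\}$ and $\{(b,y,b')\}$ have the same ordered part and carry no end-vertex labels, so they pass the stated tests. Yet their union is the whole cycle. The algorithm as literally described would then output a configuration with a one-vertex quotient that no partition into geodesics realises; for instance, it would accept $C_4$ for $d=0$. The labels record only the outer end-pieces, so neither your label argument nor your remark that interior pieces are shortest paths addresses where these interior pieces lie.

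\textbf{What a correct proof needs.} The paper's own proof passes over the same point: it asserts that the restrictions of a partition of $G[T_B]$ coincide on $B$. So you have inherited its weak spot rather than deviated from it. A correct proof needs a modified join rule:
\begin{itemize}
\item Accept $\mathcal{P}_1,\mathcal{P}_2$ whose finest common coarsening is $\mathcal{P}_B$.
\item Require each pair of consecutive bag vertices of a merged part to be consecutive in exactly one child unless they are adjacent in $G$, and require the links together with the stored end-pieces to form a single path.
\item Test geodesicity by comparing the distance between the outer ends with the sum of the distances along the pieces.
\item In the minor step, force all child parts of one merged part into the same branch set, and let $F$ join components that share a vertex of $B$ rather than an identical part.
\end{itemize}
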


\begin{proof}
    First, consider how a partition of $G[T_B]$ corresponds to partitions of $G[T_{B_1}]$ and $G[T_{B_2}]$.
    For this, note that $G[T_B]= G[T_{B_1}] \cup G[T_{B_2}]$ with $G[B] = G[T_{B_1}] \cap G[T_{B_2}]$.
    Thus, a partition of $G[T_B]$ into geodesics corresponds to a pair of partitions of $G[T_{B_1}]$ and $G[T_{B_2}]$ into geodesics.
    In the other direction, a pair of partitions $\mathcal{P}_{T_1}$ of $G[T_{B_1}]$ and $\mathcal{P}_{T_2}$ of $G[T_{B_2}]$ do not necessarily correspond to a partition of $G[T_B]$.
    Observe that they only correspond to a partition of $G[T_B]$ if they coincide on $B$ and if taking the union of any two parts $P_1\in \mathcal{P}_{T_1}$ and $P_2 \in \mathcal{P}_{T_2}$ with $P_1 \cap P_2 \neq \emptyset$ results in a geodesic in $G[T_B]$.
    Note that our algorithm combines precisely pairs of configurations that correspond to partitions that fulfill this.

    It is left to show that the algorithm properly computes the set of completable $B$-improper minors.
    To do this consider an improper minor $(H,\components)$ of $G[T_B]/\mathcal{P}_T$ with $\mathcal{P}_T$ being a partition of $G[T_B]$ into geodesics. Let $(H_1,\components_1)$ be an improper minor of $G[T_{B_1}]/\mathcal{P}_{T_1}$ where each geodesic of $\mathcal{P}_{T_1}$ is assigned to the same branch set as the corresponding geodesic in $\mathcal{P}_{T}$ is assigned in $(H,\components)$. Note that some branch sets in $(H,\components)$ might not be assigned any geodesics that intersect $G[T_{B_1}]$ and, thus, we might have $V(H)\neq V(H')$. Let $(H_2,\components_2)$ be an analogously defined improper minor of $G[T_{B_2}]/\mathcal{P}_{T_2}$. Furthermore, let $(H_1,\components_1')$ be the corresponding $B_1$-improper minor and $(H_2,\components_2')$ the corresponding $B_2$-improper minor. Note that $H_1 \cup H_2 = H$ and that to obtain the $B$-improper minor $(H,\components')$ corresponding to $(H,\components)$ it is sufficient to compute $\components'(v)$ for each $v\in V(H_1) \cap V(H_2)$ which our algorithm does. For all other vertices $v$ it holds that $\components'(v)=\{\{\emptyset\}\}$.
    
    On the other hand, a pair of an $B_1$-improper minor $(H_1,\components_1')$ and an $B_2$-improper minor $(H_2,\components_2')$ only correspond to an $B$-improper minor if and only if each part occurring in both minors is assigned to the same branch set in $(H_1,\components_1')$ and $(H_2,\components_2')$.
    Observe that our algorithm combines only such minors, and does so by checking that $\bigcup_{C_1\in\components'_1(v)}\bigcup_{P\in C_1} P=\bigcup_{C_2\in\components'_2(v)}\bigcup_{P\in C_2} P$. 
    Note that if $(H_1,\components'_1)$ and $(H_2,\components'_2)$ fulfill this condition they correspond to the $B$-improper minor $(H_1 \cup H_2,\components')$ with $\components'(v)= \{\emptyset\}$ for each $v\in V(H_1)\cup V(H_2) - (V(H_1) \cap V(H_2))$ and depending only on $\components'_1$ and $\components'_2$ for all other vertices.
    It can be easily verified that our algorithm computes $\components'$ correctly.

    Note that if $(H_1,\components_1')$ or $(H_2,\components_2')$ is incompletable then the corresponding $B$-improper minor is also incompletable. 
    On the other hand, if $(H_1,\components_1')$ and $(H_2,\components_2')$ are completable then the corresponding $B$-improper minor is completable if and only if for all $v \in V(H_1)\cap V(H_2)$ it holds that $\{\emptyset\}\notin \components'_1(v)\cup\components'_2(v)$.
    Since our algorithm excludes these cases, it correctly computes the set of completable $B$-improper minors.
    
    Thus, in total our algorithm computes the set of all configurations of $B$.
\end{proof}

Finally, we give a more detailed analysis of the runtime than in \cref{sec:XPAlgorithm}.
As argued before each bag has $O(n^{k+1}\cdot g(k))$ distinct configurations for some function $g$ and our pre-computations take time $O(n^2 \cdot h(k))$.
For join bags we iterate over all pairs of configurations which takes time $O(n^{2k+2}\cdot f(k))$ for some function $f$.
For all other bag types we iterate over all configurations which is dominated by the runtime for join bags.
Since we have $O(n)$ bags in total and all other operations on the bags depend only on $k$, we get a runtime of $O(n^{2k+3}\cdot f(k))$ for some function $f$.

\begin{remark}\label{rem:FPT}
    The reason that our algorithm is not \FPT in the treewidth of the input graph is that we store labels for each part that associate an end-vertex outside of the bag with it.
    We need these end-vertices to check whether adding a vertex to a part (introduce bag) or merging to parts (join bag) yields a geodesic.
    Apart from this we do not use these end-vertices anywhere in the algorithm.
    Thus, if our goal is to partition the given graph $G$ into (induced) paths instead of geodesics we can omit the end-vertices and, thus, obtain configurations that are only dependent on the treewidth of $G$.
    Additionally, we then do not need to pre-compute pairwise distances of the vertices in $G$.
    In total this gives us an $O(f(k)\cdot n)$ algorithm if we drop the restriction to geodesics.
\end{remark}

\section{Computing the Geodesic Treewidth is NP-hard}
\label{app:GS:NpHard}

In this section, we present omitted proofs from \cref{sec:GS:NpHard}.

\SPGSNP*

\begin{proof}
    First, we assume that $I$ is a non-satisfiable instance. Let $\mathcal{P}$ be a $s$-$t$-partition of $G_4$ into geodesics with $S \in \mathcal{P}$ being a shortest $s$-$t$-path.
    We prove that $\tw(G_4/\mathcal{P}) \geq 3$.
    Since $I$ is non-satisfiable, there exists a clause $c \in C$ such that no literal in $c$ is in $S$.
    
    Assume $c$ is a clause of size $3$ and let $c=\{l_1,l_2,l_3\}$.
    We show that the graph $G_4/\mathcal{P}$ contains a $K_4$ minor obtained from $S$ and the geodesics in $\mathcal{P}$ containing the $l_i$.
    Since every shortest path between distinct $l_i$ and $l_j$ includes vertices in $S$, it holds that $l_i$ and $l_j$ are not part of the same geodesic in $\mathcal{P}$.
    Therefore, the vertices in $G_4/\mathcal{P}$ corresponding to the geodesic $S$ and the geodesics containing $l_1, l_2$ and $l_3$, respectively, form a $K_4$ minor.
    
    Assume now that $c=\{l_1, l_2\}$ is a clause of size $2$.
    Then, once again, we show that the graph $G_4/\mathcal{P}$ contains a $K_4$ minor.
    For this, consider the five vertices $p_1,\dots,p_5$ that are adjacent to $l_1, l_2$ and $s$ in $G_3$.
    At most two of these vertices are in the same geodesic as $l_1$ and at most two of them are in the same geodesic as $l_2$ in $G_4$.
    Without loss of generality, let $p_1$ not be part of the geodesics corresponding to $l_1$ and $l_2$.
    Recall that since $S \subseteq G_1$ it holds that $p_1 \notin S$.
    Additionally, similar to case that $c$ is a clause of size $3$, it holds that $l_1$ and $l_2$ are not part of the same geodesic.
    Therefore, the vertices in $G_4/\mathcal{P}$ corresponding to the geodesic $S$ and the geodesics containing $l_1, l_2,$ and $p_1$, respectively, are part of a $K_4$ minor.
    Thus, the graph $G_4/\mathcal{P}$ has treewidth at least $3$.

    Second, we assume that $I$ is a satisfiable instance. Let $\varphi$ be a satisfying truth assignment for $I$ and let $S$ be the corresponding $s$-$t$-path. We show that $G_4 - S$ has treewidth $1$ and thus $G_4$ has geodesic treewidth at most $2$. 
    
    It holds that for every $c \in C$ at least one literal of $c$ is in $S$. We consider the graph $G_2-S$. Let $l_1$ be a vertex in $G_2-S$ and thus $\varphi(l_1)=f$. Next, we observe that $l_1$ has degree at most $1$ in $G_2-S$. This is the case since $\varphi(l_1)=f$, so for every clause $c=\{l_1, l_2\}$ the literal $l_2$ is true and thus $l_2 \in S$. Therefore, edges introduced for clauses of size 2 do not contribute to the degree of $l_1$ in $G_2-S$. Similarly, for every clause $c=\{l_1, l_2, l_3\}$ either $l_2$ or $l_3$ is true and thus $l_2 \in S$ or $l_3 \in S$. Since $l_1$ occurs in at most one clause of length $3$ it holds that $l_1$ has degree at most $1$ in $G_2-S$. Since every vertex in $G_2-S$ has degree at most $1$, $G_2-S$ is the disjoint union of edges and singletons. Therefore, the treewidth of $G_2-S$ is $1$.
    
    Next, we lift the argument to $G_3$. The graph $G_3-S$ compared to $G_2-S$ only has the additional vertices $p_i$ for $c \in C$.
    However, since for any $c=\{l_1, l_2\} \in C$ either $l_1$ or $l_2$ is on $S$ it holds that the neighborhood $N_{G_3-S}(p_i)$ of $p_i$ in $G_3-S$ is at most a single vertex for $c \in C$ and $i \in [5]$.
    Thus, $p_i$ can be added to the tree decomposition of $G_2-S$ by adding a bag containing $p_i$ and $N_{G_3-S}(p_i)$ adjacent to a bag containing $N_{G_3-S}(p_i)$. Therefore, the treewidth of $G_3-S$ is 1. 
    
    Finally, we lift the argument to $G_4$. 
    To do this note that $G_4-S$ can be obtained from $G_3-S$ by subdividing some edges and adding some disjoint paths that are each attached to one vertex of $G_3-S$.
    Since these modifications do not increase the treewidth beyond $1$, the graph $G_4-S$ also has treewidth at most $1$. Thus, the geodesic treewidth of $G_4$ is at most $2$ and more specifically there exists an $s$-$t$-partition $\mathcal{P}$ of $G_4$ into geodesics such that $G_4/\mathcal{P}$ has treewidth at most 2. 
\end{proof}

\GSNP*

\begin{proof}
    \begin{figure}

        \centering
    \begin{subfigure}[m]{0.4\textwidth}
        \centering
        \includegraphics[page=1]{figures/hardness_quotient.pdf}
        \subcaption{}
        \label{fig:geodesic_extension}
    \end{subfigure}%
    \hfill
    \begin{subfigure}[m]{0.56\textwidth}
        \centering
        \includegraphics[page=3]{figures/hardness_quotient.pdf}
        \subcaption{}
        \label{fig:hardness_quotient}
    \end{subfigure}%
        \caption{\subref{fig:geodesic_extension} A sketch of a graph $G$ with the geodesics $S_{i,j}$ highlighted in green, \subref{fig:hardness_quotient} the corresponding graph $G/\mathcal{P'}$. 
        The copies of $G_4/\mathcal{P}$ are highlighted in blue. For ease of presentation only a subgraph of $G_4/\mathcal{P}$ is depicted and edges are subdivided less often.}
        \label{fig:NPTD}
    \end{figure}

    Let $I$ be a satisfiable instance. \Cref{lem:SP-GSNP} gives us that $G_4/\mathcal{P}$ has treewidth $2$ for a $s$-$t$-partition $\mathcal{P}$.
    Note that the geodesics corresponding to the geodesics in $\mathcal{P}$ are also geodesics in $G$ since every path between two vertices in the same copy of $G_4$ that uses vertices outside this copy has length at least $4n$. We define a partition $\mathcal{P'}$ of $G$ into geodesics as follows. Let $\mathcal{P'}$ contain the geodesics of $\mathcal{P}$ for all copies of $G_4$. Let $S_{i,j}$ be the geodesic corresponding to the $s$-$t$-path in the copy of $G_4$ between $v_i$ and $v_j$. Finally, we extend $S_{1,2}$ by $v_1$ and $v_2$ and $S_{3,4}$ by $v_3$ and $v_4$ to get $\mathcal{P'}$ (see \cref{fig:geodesic_extension}). Let $\sigma_{i,j}$ be the vertex in $G/\mathcal{P'}$ corresponding to the, possibly extended, geodesic $S_{i,j}$.

    We claim that $G/\mathcal{P}'$ has treewidth at most $2$.
    For an example of $G/\mathcal{P}'$ see \cref{fig:hardness_quotient}.
    Additionally, deleting this subgraph yields components that are each attached to exactly one of the vertices of the $K_{2,4}$ in $G/\mathcal{P}'$.
    As each of these components, including the vertex of the $K_{2,4}$ it is attached to, is isomorphic to a subgraph of $G_4/\mathcal{P}$, it has treewidth at most $2$.
    Thus, each $2$-connected component of $G/\mathcal{P}'$ has treewidth at most $2$ and $G$ has geodesic treewidth at most 2.

    Now let $I$ be a non-satisfiable instance. Let $\mathcal{P'}$ be any partition of $G$ into geodesics.
    If no two distinct vertices $v_i$ and $v_j$, i.\,e., degree $3$ vertices of the $K_4$, are in a common geodesic in $\mathcal{P}'$, then $G/\mathcal{P'}$ contains a $K_4$ minor and, thus, has treewidth at least $3$.
    If $v_i$ and $v_j$ are in a common geodesic in $\mathcal{P'}$ let $\mathcal{P}$ be the geodesics of $\mathcal{P'}$ restricted to the copy of $G_4$ between $v_i$ and $v_j$.
    It holds that $\mathcal{P}$ is an $s$-$t$-partition into geodesics of $G_4$.
    In addition, $G_4/\mathcal{P}$ is a subgraph of $G/\mathcal{P'}$.
    Since $I$ is non-satisfiable, we know by \Cref{lem:SP-GSNP} that for any $s$-$t$-partition $P$ of $G_4$ into geodesics the graph $G_4/\mathcal{P}$ has treewidth greater than $2$.
    Thus, the geodesic treewidth of $G$ is at least $3$ if $I$ is a non-satisfiable instance.   
\end{proof}

\GSInc*

\begin{proof}
     We first show that if $G$ has geodesic treewidth $k$ then $G^{\star}$ has geodesic treewidth $k+1$. Let $\mathcal{P}$ be a partition of $G$ into geodesics such that $G/\mathcal{P}$ has treewidth $k$. The geodesics in $\mathcal{P}$ are also geodesics in $G^{\star}$ since any possible shortcut using $v$ has length at least $2|V(G)|$. We thus define a partition $\mathcal{P}^{\star}$ of $G^{\star}$ into geodesics consisting of $\mathcal{P}$ for each copy of $G$ and $\{\{x\} \mid x \in V(G^{\star})\setminus (V(G_1) \cup V(G_2) \cup V(G_3))\}$. Thus $\mathcal{P}^{\star}$ has all geodesics corresponding to $\mathcal{P}$ for the copies of $G$ and all other vertices are geodesics of length 0. We consider the graph $G'$ that consists of three disjoint copies of $G/\mathcal{P}$ joined by a universal vertex $v$. Since adding a universal vertex to a graph increases the treewidth by one, $G'$ has treewidth $k+1$. Observe that $G^{\star}/\mathcal{P}^{\star}$ is isomorphic to $G'$ with some additional subdivisions and thus $\tw(G^{\star}/\mathcal{P}^{\star}) \leq k+1$. Therefore, $G^{\star}$ has geodesic treewidth $k+1$.

     Next we show that if $G^{\star}$ has geodesic treewidth $k+1$ then $G$ has geodesic treewidth $k$. Let $\mathcal{P}^{\star}$ be a partition of $G^{\star}$ into geodesics such that $G^{\star}/\mathcal{P}^{\star}$ has treewidth $k+1$. Furthermore, let $v$ be the universal vertex in $G^{\star}$ and $P_v \in \mathcal{P}^{\star}$ be the geodesic containing $v$. Since $P_v$ is a path, at least one copy of $G$ does not intersect $P_v$.
     Assume without loss of generality that $G_1$ does not intersect $P_v$ and let $\mathcal{P}_1$ be $\mathcal{P}^{\star}$ restricted to $G_1$. Let $G_1'$ be the graph $G_1 / \mathcal{P}_1$ with an additional universal vertex. The graph $G_1'$ has treewidth $k+1$ since it is a subgraph of $G^{\star}/\mathcal{P}^{\star}$. Because a universal vertex increases the treewidth by one the graph $G_1 / \mathcal{P}_1$ has treewidth $k$ and thus $G_1$ has geodesic treewidth $k$.
\end{proof}

\section{Geodesic Treewidth 1 implies Product Structure}
\label{app:PSvsGS:doesGsimplyPs}

Here we finish the proof of the following lemma which is sketched in \cref{sec:PSvsGS:doesGsimplyPs}.

\GsImpliesPSForTreesLemma*

\begin{proof}
    For a partition \calP of some graph into geodesics, we call a $c$-layering \calL \emph{aligned} if for each geodesic $P\in \mathcal{P}$ the vertices of $P$ are embedded in consecutive layers of $\mathcal{L}$ according to their order in $P$.
    Note that only edges in geodesics are required to have their endpoints in two consecutive layers, whereas edges between distinct geodesics are allowed skip up to $ c - 1 $ layers.
    We prove the following stronger statement by induction on the number of geodesics in \calP.
    Note that since by definition, every aligned $c$-layering has only one vertex of each geodesic in each layer, \cref{cl:alignedLayering} implies that the 4-layered width of \calP is 1.
    \begin{claim}\label{cl:alignedLayering}
        For every graph $ G $ and every partition \calP of $ G $ into geodesics such that $ G / \calP $ is a tree, there is an aligned 4-layering. 
    \end{claim}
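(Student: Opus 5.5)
We argue by induction on $|\calP|$, following the sketch in \cref{sec:PSvsGS:doesGsimplyPs}. For $|\calP|=1$ we place the vertices of the single geodesic one per layer, in order; every edge of $G$ then lies on the geodesic or is a chord, and a geodesic has no chords, so the layering is an aligned $1$-layering. For the inductive step we choose a leaf $\gamma$ of the tree $G/\calP$ with neighbour $\delta$, and write $P_\gamma=(v_1,\dots,v_m)$ and $P_\delta=(w_1,\dots,w_n)$. The quotient of $G-P_\gamma$ by $\calP\setminus\{P_\gamma\}$ is the tree $G/\calP-\gamma$. Each part is still a geodesic of $G-P_\gamma$, since deleting vertices cannot shorten distances. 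By induction we therefore obtain an aligned $4$-layering $\calL$ of $G-P_\gamma$. All edges leaving $P_\gamma$ go to $P_\delta$, so only edges between $P_\gamma$ and $P_\delta$ need to be checked after we insert $P_\gamma$.

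We fix one edge $e=v_iw_j$ and let $w_j\in L_x$. We put $v_i$ into $L_x$ and lay out $P_\gamma$ on consecutive layers in one of two ways: in $\calL_1$, $v_{i+t}$ goes to $L_{x+t}$; in $\calL_2$, $v_{i+t}$ goes to $L_{x-t}$ (index shifts may create new empty layers). Since $P_\delta$ is aligned, $w_{j+s}$ lies in $L_{x+s}$ or $L_{x-s}$ for all $s$, and after a possible reversal we may assume $w_{j+s}\in L_{x+s}$. An edge $v_{i+t}w_{j+s}$ is \emph{non-crossing} if $t,s$ have the same sign (or one is $0$) in the layering where $P_\gamma$ runs parallel to $P_\delta$, and crossing otherwise.

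The first key step handles non-crossing edges. Consider a non-crossing edge $v_{i+t}w_{j+s}$ with $t,s\ge 0$. Its steepness is $|t-s|$. The path from $w_j$ to $w_{j+s}$ formed by $e$, the segment of $P_\gamma$ from $v_i$ to $v_{i+t}$, and this edge has length $t+2$, so the geodesic $P_\delta$ forces $s\le t+2$. Symmetrically, the path from $v_i$ to $v_{i+t}$ through $e$, $P_\delta$ and the edge has length $s+2$, so $P_\gamma$ forces $t\le s+2$. Hence every non-crossing edge is at most $2$-steep. The same computation applies to the other sign combinations.

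The second step, which I expect to be the main obstacle, handles crossing edges. An edge $v_{i+t}w_{j-s}$ with $t,s>0$ has steepness $t+s$ in the layering where it crosses, and at most $2$ in the other, since it is non-crossing there. We need to show that the two layerings cannot both have a $5$-steep edge. So suppose $\calL_1$ has a crossing edge $e_1$ of steepness at least $5$, and $\calL_2$ has a crossing edge $e_2$ of steepness at least $5$. These edges lie on opposite sides of $e$ in terms of pairing: $e_1$ joins $v_{i+a}$ to $w_{j-b}$, and $e_2$ joins $v_{i-q}$ to $w_{j+r}$, with $a+b\ge5$ and $q+r\ge5$. By symmetry we assume $a\ge q$. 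We then route from $w_{j-b}$ to $w_{j+r}$ via $e_1$, along $P_\gamma$ from $v_{i+a}$ to $v_{i-q}$ (length $a+q$), and via $e_2$. This path has length $a+q+2$, while the segment of $P_\delta$ between these vertices has length $b+r$. The bounds from the first step, applied in the layering where each edge is non-crossing, give $|a-b|\le 2$-type constraints, so $b\approx a$ and $r\approx q$. These alone do not yield a contradiction, so we instead also use the route through $e$. Comparing the length of the path $w_{j-b}\to v_{i+a}\to\cdots\to v_i\to w_j$, which is $a+2$, against $b$, and doing the same on the $e_2$ side, we find that one of the two steepness sums must be small. A careful case analysis is needed to get exactly the constant $4$: either some path shorter than the corresponding $P_\delta$-segment appears, which is a shortcut and a contradiction, or $P_\gamma$ gets a shortcut through $P_\delta$. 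Consequently one of $\calL_1,\calL_2$ has all crossing edges at most $4$-steep, all non-crossing edges at most $2$-steep, and every edge inside a part is $1$-steep. This gives an aligned $4$-layering, which completes the induction.

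Finally, an aligned layering puts at most one vertex of each geodesic in each layer. Thus \cref{cl:alignedLayering} implies that $\calP$ has $4$-layered width $1$, which proves \cref{lem:GsImpliesPSForTrees}.
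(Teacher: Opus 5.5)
Your induction, the two mirrored placements of $P_\gamma$, and the bound that non-crossing edges are at most $2$-steep all match the paper. Your remark that the parts stay geodesics in $G-P_\gamma$ is a point the paper leaves implicit. The gap is the decisive step: showing that $\mathcal{L}_1$ and $\mathcal{L}_2$ cannot both contain a crossing edge of steepness at least $5$.

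With your normalisation $w_{j+s}\in L_{x+s}$, a crossing edge of $\mathcal{L}_2$ must be of the form $v_{i+q}w_{j+r}$ or $v_{i-q}w_{j-r}$ with $q,r>0$. Your $e_2=v_{i-q}w_{j+r}$ lies in layers $L_{x+q}$ and $L_{x+r}$ of $\mathcal{L}_2$, so it is non-crossing there. It is actually a second crossing edge of $\mathcal{L}_1$. Two steep crossing edges in the same layering are perfectly possible. If $P_\gamma$ and $P_\delta$ form a ladder with rungs $v_{i+t}w_{j-t}$, both sides are geodesics and the quotient is $K_2$. Yet $v_{i+3}w_{j-3}$ and $v_{i-3}w_{j+3}$ are both $6$-steep in $\mathcal{L}_1$. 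So your length comparison cannot produce a contradiction, as you noticed. Your fallback, comparing $a+2$ with $b$, only reproduces $|a-b|\le 2$, i.e.\ step one for $e_1$ in $\mathcal{L}_2$. That bounds a difference, not the sum $a+b$. The promised ``careful case analysis'' is exactly the missing argument.

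The missing idea is to pair $e_1$ with an edge that is crossing in the \emph{other} layering. Then the two edges leave $e$ on the same side along one of the geodesics. Take $e_1=v_{i+a}w_{j-b}$ and, say, $e_2=v_{i+q}w_{j+r}$; the case $e_2=v_{i-q}w_{j-r}$ is symmetric with the roles of $P_\gamma$ and $P_\delta$ exchanged. The walk $w_{j-b},v_{i+a},\dots,v_{i+q},w_{j+r}$ has length $|a-q|+2$. Since $P_\delta$ is a geodesic, $b+r\le|a-q|+2$. Step one, applied in the respective other layering, gives $b\ge a-2$ and $r\ge q-2$. If $a\ge q$ this yields $q+r\le 4$, and if $a<q$ it yields $a+b\le 4$. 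Hence whenever $\mathcal{L}_1$ has a crossing edge of steepness at least $5$, every crossing edge of $\mathcal{L}_2$ is at most $4$-steep.

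This is the paper's argument in compressed form. The paper first shows that the $P_\gamma$-endpoint of $e_2$ is closer to $v_i$ than that of $e_1$, since otherwise $P_\delta$ has a shortcut. It then uses the same walk to bound the steepness of $e_2$ in $\mathcal{L}_2$ by $4$.
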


    If $|\mathcal{P}| = 1$, then there is an aligned $4$-layering of $G$ by putting each vertex into its own layer, ordered according to the single geodesic in \calP. 
    Next, consider a graph $ G $ with a partition \calP into at least two geodesics such that $ G / \calP $ is a tree.

    \NewDocumentCommand{\leaf}{}{\ensuremath{\gamma}}%
    \NewDocumentCommand{\parent}{}{\ensuremath{\delta}}%
    Let $\leaf, \parent \in V(G/\mathcal{P})$ with $\leaf$ being a leaf adjacent to $\parent$. Let $P_\leaf$ and $P_\parent$ be the geodesics corresponding to $\leaf$ and $\parent$, respectively, with $P_\leaf=(v_1, \dots, v_m)$ and $P_\parent=(w_1, \dots, w_n)$.
    Let $\mathcal{L}=(L_0, L_1, \dots)$ be an aligned $4$-layering of $G-P_\leaf$ which we obtain by induction. Note that in the following construction some layers may be assigned negative indices for ease of presentation.
    
    \begin{figure}
        \centering
        \includegraphics{figures/AlignTwoPaths_alt.pdf}
        \caption{Layering $\mathcal{L}_1$ on the left and layering $\mathcal{L}_2$ on the right. The green edge is the alignment edge $e=v_iw_j$, where $ i = 2, j = 3 $ in this example.}
        \label{fig:AlignTwoPaths}
    \end{figure}
    
    Since $\leaf$ is adjacent to $\parent$ in $ G / \calP $, there exist $i \in [m]$ and $j \in [n]$ such that $e=v_iw_j \in E(G)$. We use this edge $e$ to align the path $P_\leaf$ and call $ e $ the \emph{alignment edge}. Let $L_x$ be the layer in $\mathcal{L}$ that contains $w_j$. We construct two aligned $c$-layerings $\mathcal{L}_1, \mathcal{L}_2$ of $G$ from $\mathcal{L}$.
    We then show that one of them indeed is an aligned 4-layering.
    To construct $\mathcal{L}_1$ we take $\mathcal{L}$ and for every $l \in [m]$ we place the vertex $v_l$ in the layer $L_{x+(l-i)}$. 
    Similarly, to construct $\mathcal{L}_2$ we take $\mathcal{L}$ and for every $l\in [m]$ we place the vertex $v_l$ in the layer $L_{x-(l-i)}$. Thus, $\mathcal{L}_2$ only differs from $\mathcal{L}_1$ in that the path $P_\leaf$ is mirrored in the layering. An example of the two possible resulting layerings can be seen in \Cref{fig:AlignTwoPaths}. For both $c$-layerings, we have that for each geodesic $P\in \mathcal{P}$ the vertices of $P$ are embedded in consecutive layers according to their order in $P$, that is, they are aligned.

    \begin{figure}
        \centering
        \begin{subfigure}{0.21\textwidth}
            \includegraphics[page=2]{figures/alignedLayerings}
            \subcaption{}
            \label{fig:NoSteepNonCrossingEdges}
        \end{subfigure}
        \hfill
        \begin{subfigure}{0.21\textwidth}
            \includegraphics[page=1]{figures/alignedLayerings}
            \subcaption{}
            \label{fig:NoNewSteepCrossingEdges}
        \end{subfigure}
        \hfill
        \begin{subfigure}{0.5\textwidth}
            \includegraphics[page=3]{figures/alignedLayerings}
            \subcaption{}
            \label{fig:NoLargeDiffCrossingEdges3}
        \end{subfigure}

        \caption{%
            Embeddings according to $ \calL_1 $ (where the first may also be with $ \calL_2 $).
            \textbf{\textsf{(a)}} If a non-crossing edge $e'$ is $3$-steep, then together with the alignment edge $e$, this results in a shortcut for one of the two geodesics, highlighted green.
            \textbf{\textsf{(b)}} Assuming $a \geq q$. Since $e_2$ is at most $2$-steep and $e_1$ is at least $5$-steep, the edges $e_1$ and $e_2$ result in a shortcut for $P_\parent$ that is highlighted in green. 
            \textbf{\textsf{(c)}} By the argument shown in (b), the two red subpaths have roughly the same length ($ \pm 2$), and since $ e_2 $ is at most 2-steep, the two orange subpaths have roughly the same length ($ \pm 2 $).
            Hence, the shortcut $ S $ (green) consists of a \textcolor{KITred}{red} subpath minus an \textcolor{KITorange}{orange} subpath 
            plus $ e_1, e_2 $, so has length 
            $ \textcolor{KITred}{(q-i)} - \textcolor{KITorange}{(a-i)} + \textcolor{KITgreen}{2} $.
            The subpath of $ P_\parent $ from $ w_r $ to $ w_b $ consists of a \textcolor{KITred}{red} and an \textcolor{KITorange}{orange} subpath and has length 
            $ b - r = \textcolor{KITred}{(j-r)} + \textcolor{KITorange}{(b-j)} $.
            Note that increasing the length of the orange path decreases the length of the shortcut even further, while $ P_\parent $ gets longer.
        }
    \end{figure}
    
    To prove that either $\mathcal{L}_1$ or $\mathcal{L}_2$ is an aligned $4$-layering of $G$, it remains to show that one of them is a $4$-layering. 
    For this, we only need to consider edges between $ P_\leaf $ and $ P_\parent $.
    For a layering $\mathcal{L'}$ let a \emph{$k$-steep} edge be an edge with endpoints in layers $L_a, L_b$ of $\mathcal{L'}$ and $|a - b| = k$. 
    We also call $ k $ the \emph{steepness} of the edge in this case.
    For the layerings $\mathcal{L}_1$ and $\mathcal{L}_2$, let a \emph{crossing} edge be an edge with endpoints in layers $L_a, L_b$ of $\mathcal{L}_1$, respectively $\mathcal{L}_2$, and $a < x < b$, i.\,e., edges that cross the alignment edge $e$ in the embedding shown in \Cref{fig:AlignTwoPaths}. 
    Note that non-crossing edges $e'$ are at most $2$-steep since otherwise $e$ and $e'$ result in a shortcut for one of the two geodesics $P_\leaf$ or $P_\parent$ as shown in \Cref{fig:NoSteepNonCrossingEdges}. 
    That is, it is only left to argue for crossing edges.

    Assume now that there is a crossing edge $e_1=v_qw_r$ that is at least 5-steep in $\mathcal{L}_1$ and consider a crossing edge $e_2= v_aw_b$ of $ \calL_2 $.
    We show that $e_2$ is at most 4-steep in $\mathcal{L}_2$ and thus $\mathcal{L}_2$ is an aligned $4$-layering. 
    Note that if an edge is crossing in $\mathcal{L}_1$ then it is non-crossing in $\mathcal{L}_2$ and vice versa. 
    In particular, $ e_1 $ is non-crossing in $ \calL_2 $ and $ e_2 $ is non-crossing in $ \calL_1 $.
    Without loss of generality, let $a \geq i$ and $b \geq j$, meaning that $e_2$ is below the alignment edge $e$ in the embedding of $\mathcal{L}_1$ shown in \Cref{fig:AlignTwoPaths}, and let $q > i$ and $r < j$, i.\,e., $ e_1 $ crosses $ e $ from top right to bottom left in \cref{fig:NoNewSteepCrossingEdges}.

    We first show that all edges that are crossing in $ \calL_2 $ are close to the alignment edge $ e $.
    More precisely, we show for the crossing edge $ e_2 = v_a w_b $ that $ a < q $, i.\,e., in the geodesic $ P_\leaf $ the vertex $ v_a $ is closer to $ v_i $ than the endpoint $ v_q $ of $ e_1 $, see \cref{fig:NoLargeDiffCrossingEdges3}. 
    For this, assume the opposite, i.\,e., that $ a \geq q $.
    Recall that $e_2$ is non-crossing in $\mathcal{L}_1$ and, therefore, at most $2$-steep in $\mathcal{L}_1$. Thus, the edges $e_1$ and $e_2$ result in a shortcut of $ P_\parent $ between $ w_r $ and $ w_b $ as shown in \Cref{fig:NoNewSteepCrossingEdges}.
    Indeed, the edge $ e_1 $ is at least 5-steep, whereas $ e_2 $ is at most 2-steep, so the $ w_r $-$ w_b $-subpath of $ P_\parent $ contains at least $ 5 - 2 = 3 $ vertices more than the $ v_a $-$ v_q $-subpath of $ P_\leaf $.
    Together with $ e_1 $ and $ e_2 $, the path $ (w_r, v_q, \dots, v_a, w_b) $ is shorter by at least $ 3 - 2 = 1 $ than the geodesic between $ w_r $ and $ w_b $ obtained from $ P_\parent $, a contradiction.

    That is, $ v_a $ is between $ v_i $ and $ v_q $ in $ P_\leaf $ and we aim to show that $ e_2 $ is at most 4-steep in $ \calL_2 $.
    First, since $ e_1 = v_q w_r $ is non-crossing in $ \calL_2 $, it is at most 2-steep and, thus, its endpoints have roughly the same distance to $ e = v_i w_j $, up to an offset of 2.
    More precisely, we use that $q-i\leq (j-r)+2$, where $ q - i $, respectively $ r - j $, is the distance between the endpoints of $ e_1 $ and $ e $ in the two geodesics, recall also \cref{fig:NoSteepNonCrossingEdges} with $ e' = e_1 $.
    In $ \calL_1 $, this translates to a symmetry of $ e_1 $ with respect to $ e $ as $ q-i \approx j - r $ as shown in \cref{fig:NoLargeDiffCrossingEdges3} (red).
    
    Now, consider the geodesic $ P $ from $ w_r $ via $ w_j $ to $ w_b $, which is a subpath of $ P_\parent $ and has length $ p = b - r = (b-j) + (j-r) $.
    We construct another path $ P' = (w_r, v_q, \dots, v_a, w_b) $ using edges $e_1$ and $e_2$ and the path $P_\leaf$ from $ v_q $ to $ v_a $. An example for such a path $P'$ with all relevant distances is given in \cref{fig:NoLargeDiffCrossingEdges3}. 
    Note that $P'$ has length $p' = q-a+2 = (q-i) - (a-i) + 2 $.
    Again, since $ P $ is a geodesic, $ P' $ is at least as long as $ P $, so we have 
    $ 0 \geq p - p' = (b-j) + (j-r) - (q-i) + (a-i) - 2 \geq (b-j) + (a-i) - 4 $ using the symmetry of $ e_1 $.
    Observe that $ (b-j) + (a-i) $ is the steepness of $ e_2 = v_a w_b $ in $\calL_2$ since $ v_i $ and $ w_j $ are in the same layer.
    That is, $ e_2 $ is at most $ 4 $-steep in $\calL_2$, as required, so $ \calL_2 $ is an aligned 4-layering.
\end{proof}

\section{Lower Bound for Geodesic Treewidth of Planar Graphs}
\label{app:GS:GSLowerBoundPlanar}

In this section, we give a full proof of \cref{thm:LbPlanarGraph}, which is sketched in \cref{sec:GS:GSLowerBoundPlanar}.

\LbPlanarGraph*

We remark that the treewidth in \cref{thm:LbPlanarGraph} is indeed exactly 5, but we only prove the lower bound.
To do so, we construct a sequence of graphs $ G_1 \subseteq G_2 \subseteq G_3 $ and show that for every partition $ \mathcal{P} $ of $ G_3 $ into geodesics, the quotient $ G_3 / \mathcal{P} $ contains the so-called cuboctahedron (\cref{fig:cuboctahedron}) as a minor, a planar graph of treewidth 5.
While constructing $ G_3 $, we outline how to find the cuboctahedron step by step, and then provide a full proof at the end of the section in \cref{lem:cuboctahedron} based on our observations we make during the construction.
For an overview, $ G_1 $ yields the blue and the green cycle in \cref{fig:cuboctahedron}, consisting of the $a$-vertices, respectively, $ c $-vertices.
Then $ G_2 $ adds the red $ b $-vertices to the green cycle, and finally $ G_3 $ provides the edges between $ b $- and $ c $-vertices.

\begin{figure}[htb]
    \centering
    \includegraphics{figures/Treewidth5GraphGDash.pdf}
    \caption{The cuboctahedron is known to have treewidth $5$.}
    \label{fig:Treewidth5GraphGDash}
    \label{fig:cuboctahedron}
\end{figure}

During the construction, we make sure that every newly created path between already present vertices is strictly longer than the shortest path between them.
In particular, distances are preserved and we do not obtain new geodesics between previously introduced vertices.
Hence, observations concerning geodesics of some $ G_i $ also hold in subsequent supergraphs.

We start with a triangle whose vertices are called $ v_0 $, $ v_1 $, and $ v_2 $.
Note that for every partition of a triangle into geodesics, at least two of the three vertices are in distinct geodesics. 
We aim to lift this property to larger parts of our construction.
That is, we describe a graph of which we add three copies, one for each vertex of the initial triangle, and show that for every partition into geodesics, there are two copies that do not share geodesics.
In these two copies, we then find the desired cuboctahedron-minor.

\begin{figure}
    \centering
    \includegraphics{figures/V0AndFans.pdf}
    \caption{Two fans at $ v_0 $ in $ G_1 $, where the left fan is highlighted in blue and the right fan in green. A possible geodesic containing $v_0$ is drawn in red. Note that every geodesic has at most three vertices in any fan. 
    Thus, the thick vertices that are not red belong to pairwise distinct geodesics.}
    \label{fig:V0AndFans}
\end{figure}

To construct $ G_1 $ we add two fans at each $ v_i $ as follows, where $ v_i $ is the center.
We describe the two fans at $ v_0 $, the other two copies are inserted accordingly.
The first fan is called the \emph{right fan at $ v_0 $} and consists of vertices $ r_0, r_1, \dots $ forming a path in this order, and additionally edges between $ v_0 $, called the center, and each vertex of the path.
Similarly, the \emph{left fan at $v_0$} consists of a path $ \ell_0, \ell_1, \dots $ and a star with center $ v_0 $.
For the sake of completeness, let us note that the path in the right fan consists of 133 vertices, and the path in the left fan of 265 vertices.
However, for now we always assume that the fans are sufficiently large for our arguments and discuss the exact sizes at the end of the proof in \cref{lem:cuboctahedron}.
An example of $v_0$ and the adjacent fans is shown in \cref{fig:V0AndFans}.
Observe that the distance between any two vertices in a fan is at most 2.
Hence, within a fan, two vertices whose distance in the path is at least 3 can only be in the same geodesic if their geodesic contains the center.
Since the fans are separated from the remainder of the graph by their center, this transfers to $ G_1 $.

\LBfans*
In \cref{lem:cuboctahedron}, we use \cref{cl:lb_fans} to find cycles which serve as a base to build the cuboctahedron.
Indeed, for every partition $ \mathcal{P} $ of $ G_1 $ in geodesics, the quotient $ G_1 / \mathcal{P} $ contains six arbitrarily large cycles, at least one for each fan, provided the fans are sufficiently large.

\begin{figure}
    \begin{subfigure}[c]{0.48\textwidth}
        \centering
        \includegraphics[page=1]{figures/V0AndRightFan.pdf}
    \end{subfigure}
    \hfill
    \begin{subfigure}[c]{0.48\textwidth}
        \centering
        \includegraphics[page=2]{figures/V0AndRightFan.pdf}
    \end{subfigure}
    \caption{Left: A right fan in $ G_2 $ with $ z_k $-attachments. Note that for all $k$, either $z'_{k}$ or $z''_{k}$ are part of a geodesic that contains no vertices in the fan. As an example the $z_0$-attachment consists of the vertices $z_0, r_0, \dots, r_6$ and the length $6$-paths in between that contain $z_0'$ and $z_0''$.
    Right: The cycle with stacked vertices that is contained in the quotient $ G_2 / \mathcal{P} $ for every partition $ \mathcal{P} $.}
    \label{fig:V0AndRightFan}
\end{figure}

Having $ G_1$, we extend it to $ G_2 $ by adding vertices adjacent to the right fan of $v_i$ for each $i \in \{0, 1, 2\} $.
Again, we only describe the construction for $v_0$.
First, we introduce vertices $z_0, z_1, \dots $ where each vertex $z_k$ is adjacent to $r_{6k+3}$.
In addition, each vertex $z_k$ is connected by paths of length $6$ to the vertices $r_{6k}$ and $r_{6(k+1)}$. 
We call the subgraph consisting of $ z_k, r_{6k}, \dots, r_{6(k+1)} $ and corresponding length-6 paths the \emph{$z_k$-attachment}.
An example of $v_0$ and its right fan with some $z_k $-attachments is shown in \cref{fig:V0AndRightFan}. 
Note that this introduces new paths between vertices in $ G_1 $.
However, each such path has length at least 7, which is strictly greater than maximum distance between vertices in $ G_1 $.
In particular, distances are preserved, as required.

We denote the neighbors of $ z_k $ by $ z'_k $ and $ z''_k $, where $ z'_k $ lies on the length-$6$ path to $r_{6k}$, and $ z''_k $ on the length-6 path to $r_{6(k+1)}$.
Observe that for each $ k $, the vertices $ z'_k $ and $ z''_k $ have distance 3 to $ v_0 $, and thus distance at most 4 to every vertex of the fan.
Hence, the shortest path to any vertex in the right fan of $v_0$ includes the vertex $r_{6k+3}$. 
Since a geodesic containing $r_{6k+3}$ can contain only one of $ z'_k $ and $ z''_k $, we obtain the following.

\LBzk*
This allows us to extend the cycle from the quotient of $ G_1 $ by stacking vertices on the edges of the cycle as shown in \cref{fig:V0AndRightFan} (right).
The additional vertices are obtained from the geodesic containing $ z'_k $, respectively $ z''_k $, provided by \cref{cl:lb_zk}, whereas the other of the two vertices is not used.

\begin{figure}
    \centering
    \includegraphics{figures/LbPlanarGraphRotated_big.pdf}
    \caption{The planar graph $ G_3 $, which has geodesic treewidth 5. The right fans are highlighted in green and the left fans are highlighted in blue. The yellow paths are newly added in $ G_3 $.}
    \label{fig:LbPlanarGraph}
\end{figure}

Finally, to construct $ G_3 $ we connect the three pairs of fans with each other, which we illustrate in \cref{fig:LbPlanarGraph}.
For this, recall that each of the three vertices $ v_0, v_1, v_2 $ of the initial triangle has its own copy of a left fan and a right fan including the vertices and edges added in $ G_2 $.
We denote these copies by $ C_0 $, $ C_1 $, and $ C_2 $.
For $ i \in \{0, 1, 2\} $ (indices taken modulo~3) and every $ k $, we connect $z'_{k}$ of copy $ C_i $ to the vertices $\ell_{12k}$ and $\ell_{12k+3}$ of copy $C_{i+1}$ using two paths of length $6$.
We also connect $z''_{k}$ of copy $C_i$ to $l_{12k+6}$ and $l_{12k+9}$ of copy $C_{i+1}$ using another two paths of length $6$. 
We say a $ z_k $-attachment of some right fan is connected to a vertex of a different fan if $ z'_k $ or $ z''_k $ are connected to it via one of the newly added length-6 paths.
Observe that $ G_3 $ indeed admits a planar embedding such that the copies of $ r_0 $ and $ \ell_0 $ share a common face.

To verify that distances are preserved, recall that in $ G_2 $ the distance between $ z'_k $, respectively $ z''_k $, to the center of its fan is 3, and hence at most 5 to any vertex in a fan.
Since the newly added paths in $ G_3 $ are of length 6, all paths between vertices of $ G_2 $ using a new vertex are strictly longer than the shortest path in $ G_2 $.
Moreover, for each two vertices in different copies, the shortest path contains the respective centers.

\LBcopies*
In particular, since the three vertices $ v_0, v_1, v_2 $ induce a triangle and thus cannot be in a single geodesic, at least two copies do not share geodesics.
We use these two copies to find the desired cuboctahedron.

\cuboctahedron*

\begin{proof}
    To find a cuboctahedron-minor in the quotient $ G_3 / \mathcal{P} $, we aim to identify suitable geodesics in $ \mathcal{P} $ that correspond to vertices of the cuboctahedron, which we refer to by colors and names as shown in \cref{fig:cuboctahedron}.
    Without loss of generality, the vertices $ v_0 $ and $ v_1 $ of the initial triangle are in distinct geodesics, denoted by $ P_0 $ and $ P_1 $, respectively.
    By \cref{cl:lb_copies}, no geodesic contains vertices of both copies $ C_0 $ and $ C_1 $.
    Thus, vertices we find in one of the copies for the cuboctahedron do not coincide with those in the other copy.
    
    \begin{figure}
        \centering
        \includegraphics{figures/RightFanV0AndLeftFanV1.pdf}
        \caption{In green the right sub-fan of $v_0$ and in blue the left sub-fan of $v_1$. Subdivision vertices are not drawn for better readability. In purple the geodesics containing $v_0$ and $v_1$ that do not intersect the sub-fans besides in $v_0$ and $v_1$.
        }
        \label{fig:RightFanV0AndLeftFanV1}
    \end{figure}
    
    We start with finding the green cycle consisting of $ c $-vertices in the right fan at $ v_0 $ and the blue cycle consisting of $ a $-vertices in the left fan of $ v_1 $.
    Crucially, we need to align the two cycles such that they can be connected to a cuboctahedron.
    Since the distance between any two vertices of a fan is at most 2, $ P_0 $ contains $ v_0 $ and at most two additional vertices of the right fan at $ v_0 $.
    Note that these two vertices are contained in at most four $ z_k $-attachments, up to two adjacent ones per vertex.
    Similarly, $ P_1 $ contains $ v_1 $ and at most two further vertices in the left fan at $ v_1 $, each of which is connected to at most one $ z_k $-attachment. 
    That is, there are at most six $ z_k $-attachments that contain a vertex in $ P_0 $ or are connected to a vertex in $ P_1 $, and removing them splits the fans into at most five sub-fans.
    Hence, assuming the fans are sufficiently large such that we have $ 6 + 5 \cdot 3 + 1 = 22 $ $ z_k$-attachments, we find sub-fans with four consecutive $ z_k $-attachments that neither contain a vertex of $ P_0 $ nor are connected to one of $ P_1 $.
    We refer to \cref{fig:RightFanV0AndLeftFanV1} for an illustration.

    \begin{figure}
        \centering
        \includegraphics{figures/RightFanV0AndLeftFanV1Minor_new.pdf}
        \caption{Left: The left sub-fan of $v_1$ (blue) and the right sub-fan of $v_0$ (green). Subdivision vertices are not drawn and only every third vertex of the fans is drawn for better readability. All the drawn vertices of the fans and $z'_{k}, z''_{k+1}, z'_{k+2}$ and $z'_{k+3}$ belong to pairwise distinct geodesics. The edges and paths in light gray are not needed for the cuboctahedron.
        Right: Alternative drawing of the cuboctahedron that is known to have treewidth $5$. Note that $c_1$ and $a_1$ are vertices that are drawn as half-circles to match the embedding of $ G_3 $ on the left.
        }
    \label{fig:RightFanV0AndLeftFanV1Minor}
\end{figure}
    
    Within these sub-fans, we now find a cuboctahedron-minor, which is shown in \cref{fig:RightFanV0AndLeftFanV1Minor} on the right with an embedding that matches the subgraphs of $ G_3 $ shown in \cref{fig:RightFanV0AndLeftFanV1,fig:RightFanV0AndLeftFanV1Minor}.
    First, by \cref{cl:lb_fans}, the vertices $ r_{3k} $ of the right fan are in pairwise distinct geodesics, and they are distinct of $ P_0 $ by choice of the considered sub-fan.
    Hence, these geodesics together with $ P_0 $ form a cycle-minor.
    By \cref{cl:lb_zk}, for each $ k $, one of $ z_k' $ and $ z_k'' $ does not share a geodesic with the right fan.
    Thus, their geodesics provide additional vertices for our desired minor as shown in \cref{fig:RightFanV0AndLeftFanV1Minor}, where they serve as $b$-vertices.
    Finally, the left fan at $ v_1 $ provides the $ a $-cycle that is connected to the $ b $-vertices via the paths introduced when constructing $ G_3 $ from $ G_2 $.
    
    To conclude, we remark that in order to have 22 $ z_k $-attachments, it suffices that the path of each right fan consists of $ 22 \cdot 6 + 1 = 133 $ vertices, and the path of each left fan has $ 22 \cdot 12 + 1 = 265 $ vertices. 
\end{proof}
\end{document}